\theoremstyle{plain}
\newtheorem{theorem}{Theorem}[section]
\newtheorem{lemma}[theorem]{Lemma}
\newtheorem{corollary}[theorem]{Corollary}
\newtheorem{proposition}[theorem]{Proposition}
\newtheorem{assumption}[theorem]{Assumption}
\theoremstyle{remark}
\newtheorem{remark}[theorem]{Remark}
\numberwithin{equation}{section}
\newcommand{\C}{\mathbb{C}}
\newcommand{\R}{\mathbb{R}}
\newcommand{\Q}{\mathbb{Q}}
\newcommand{\F}{\mathcal{F}}
\renewcommand{\Im}{\operatorname{Im}}
\renewcommand{\Re}{\operatorname{Re}}
\newcommand{\norm}[1]{\left\lVert #1\right\rVert}
\newcommand{\tnorm}[1]{\lVert #1\rVert}
\def\({\left(}
\def\){\right)}
\def\<{\left\langle}
\def\>{\right\rangle}
\def\le{\leqslant}
\def\ge{\geqslant}
\def \F{\mathcal{F}}
\newcommand{\eps}{\varepsilon}
\DeclareMathOperator{\sign}{sign}
\DeclareMathOperator{\sn}{sn}
\DeclareMathOperator{\cn}{cn}
\DeclareMathOperator{\dn}{dn}
\DeclareMathOperator{\cd}{cd}
\DeclareMathOperator{\sd}{sd}
\DeclareMathOperator{\nd}{nd}
\DeclareMathOperator{\pq}{pq}
\DeclareMathOperator{\rank}{rank}
\DeclareMathOperator{\Ker}{Ker}
\DeclareMathOperator{\tr}{tr}
\DeclareMathOperator{\Span}{Span}
\newcommand{\todayd}{\the\year/\the\month/\the\day}
\theoremstyle{definition}
\newcommand{\ol}{\overline}
\begin{document}
\title[Cubic NLS Systems without Coercive Conserved Quantity]{
Global existence and large-time behavior\\ of solutions to cubic nonlinear Schr\"odinger systems\\ without coercive conserved quantity
}

\author[S. Masaki]{Satoshi MASAKI}
\address{Department of mathematics, 
Hokkaido University, Sapporo Hokkaido, 060-0810, Japan}
\email{masaki@math.sci.hokudai.ac.jp}
%
%

\begin{abstract}
In this article, we investigate the large-time behavior of small solutions to a system of one-dimensional cubic nonlinear Schr\"odinger equations with two components.
In previous studies, a structural condition on the nonlinearity has been employed to guarantee the existence of a coercive, mass-type conserved quantity.
We identify a new class of systems that do not satisfy such a condition and thus lack a coercive conserved quantity. Nonetheless, we establish the global existence and describe the large-time behavior for small solutions in this class.
In this setting, the asymptotic profile is described in terms of solutions to the corresponding system of ordinary differential equations (ODEs).
A key element of our analysis is the use of a quartic conserved quantity associated with the ODE system.
Moreover, for a specific example within this class, we solve the ODE system explicitly, showing that the asymptotic behavior is expressed using Jacobi elliptic functions.

\end{abstract}

\maketitle

\section{Introduction}

In this article, we consider the initial value problem of a system of cubic NLS equations composed of two components
\begin{equation}\label{E:generalNLS}
	\left\{
	\begin{aligned}
	&(i  \partial_t + \partial_x^2 )u_j = F_j(u_1,u_2),  \quad (t,x) \in \mathbb{R}\times \mathbb{R}, \quad j=1,2,\\
	&(u_1(0),u_2(0)) = (u_{0,1},u_{0,2}) \in H^{0,1} \times H^{0,1}
	\end{aligned}
	\right.
\end{equation}
with
\begin{equation}\label{E:nonlinearity}
	\left\{
	\begin{aligned}
	F_1(u_1,u_2) ={}& \lambda_1 |u_1|^2 u_1 + \lambda_2 |u_1|^2 u_2 + \lambda_3 u_1^2 \overline{u_2} +
	\lambda_4 |u_2|^2 u_1 + \lambda_5 u_2^2 \overline{u_1} + \lambda_6 |u_2|^2 u_2,\\
	F_2(u_1,u_2)={}& \lambda_7 |u_1|^2 u_1 + \lambda_8 |u_1|^2 u_2 + \lambda_9 u_1^2 \overline{u_2} +
	\lambda_{10} |u_2|^2 u_1 + \lambda_{11} u_2^2 \overline{u_1} + \lambda_{12} |u_2|^2 u_2,
	\end{aligned}
	\right.
\end{equation}
where $(\lambda_1, \dots, \lambda_{12}) \in \mathbb{R}^{12}$.
We aim to establish the global existence of solutions for small initial data and study their asymptotic behavior under a suitable condition on the parameters.

It is well known from previous studies that the cubic nonlinear term is critical in one dimension. Firstly, let us briefly review for a single equation. 
In the single-equation case,
the gauge-invariant cubic NLS has the form:
\begin{equation}\label{E:csNLS}
	(i  \partial_t + \partial_x^2 )u = \lambda |u|^2 u,
\end{equation}
with a number $\lambda \in \mathbb{C}$.
The number $\lambda$ indicates the property of the system.
If it is a real number $\lambda \in \mathbb{R}$, the modified scattering-type asymptotic behavior of small solutions has been obtained by Ozawa \cite{Oz}:
\begin{equation}\label{E:mscattering}
	u(t,x) = (2it)^{-\frac12} e^{i\frac{x^2}{4t}} \widehat{u_+}(\tfrac{x}{2t}) \exp \( -i \lambda |\widehat{u_+}(\tfrac{x}{2t})|^2 \tfrac{\log t}{2}  \) + O(t^{-\frac34+})
\end{equation}
in $L^\infty$  as $t\to\infty$,
where $\widehat{u_+}$ is a suitable function (see also \cite{GO,HN,KP,IT,MM,MMU,Mu}).
When $\lambda$ is not real, the NLS equation exhibits dissipative or amplification effects \cite{Shimomura,Kita}. It is noteworthy that in these cases, the global existence of solutions  is not trivial. In particular, systems with amplification effects are known to possess a blowup solution (see \cite{Kita}).
We refer the readers to \cite{MM2,MurPus} and references therein for studies in this direction of a wider class of single equations.

Even for the cubic NLS systems
of the form \eqref{E:generalNLS}, i.e., for a system that consists with two components and has real coefficients for the nonlinear terms, the system exhibits more diversity than the single equation case. Indeed, an interesting example of the system of the form \eqref{E:generalNLS} is given in
 \cite[Appendix B]{MSU2}.
The system
\begin{equation}
	\left\{
	\begin{aligned}
	& (i  \partial_t + \partial_x^2 )u_1 = -2(|u_1|^2- |u_2|^2) u_1 + \overline{u_1}u_2^2,  \\
	& (i  \partial_t + \partial_x^2 )u_2 = -2(|u_1|^2- |u_2|^2) u_2 - u_1^2\overline{u_2} 
	\end{aligned}
	\right.
\end{equation}
admits at least three types of solutions. The first type, represented by solutions of the form $(u,0)$ and $(0,u)$, exhibits the modified scattering behavior described in \eqref{E:mscattering}.
This is because $u$ in these forms solves \eqref{E:csNLS} with $\lambda=-1$ and $\lambda=1$, respectively.
The second type, $(u,e^{-i\pi/4} u)$, exhibits a dissipative effect, while the third type, $(u,e^{i\pi/4} u)$, demonstrates amplification effects for positive time direction.
In these ansatzes, $u$ solves \eqref{E:csNLS} with $\lambda=-i$ and $\lambda=i$, respectively.
The example also highlights that the global existence of solutions  is not always guaranteed 
even when the coefficients are real, as in  \eqref{E:generalNLS}.

A simple yet powerful tool for ensuring global existence is the use of conservation laws, which are intrinsic properties of the system.
Specifically, if a system possesses a conserved quantity of the form
\begin{equation}\label{E:quadconserve}
\int_\mathbb{R} (a |u_1|^2 + 2b\Re (\overline{u_1}u_2) + c |u_2|^2)dx, \quad ac > b^2,
\end{equation}
then solutions exist globally in time for arbitrary initial data in $L^2 \times L^2$.
The condition $ac > b^2$ ensures the coercivity of this conserved quantity, i.e., its equivalence to the square of the $L^2 \times L^2$-norm.
The existence of such a conserved quantity, as well as its coercivity, is determined by the structure of the system's nonlinearity.

In this article, we refer to this condition as the \emph{weak null gauge condition}, as it represents a weaker form of the null gauge condition introduced in \cite{Tsu,KaTsu}, which was originally motivated by studies of nonlinear wave equations (see \cite{Ka}).

The results concerning the asymptotic behavior of system \eqref{E:generalNLS} for those satisfying this weak null condition are available in \cite{NST,KS,M3,U}. Particularly, in \cite{KS}, it is shown that small solutions have the following asymptotic behavior:
\begin{equation}\label{E:KSasymptotics}
	u_j(t,x) = (2it)^{-\frac12} e^{i\frac{x^2}{4t}} A_j \( \tfrac{\log t}{2} ; \tfrac{x}{2t} \) + O(t^{-\frac34+})
\end{equation}
in $L^\infty$ as $t\to\infty$,
where $(A_1,A_2)$ are one-parameter families of solutions to the corresponding system of
ordinary differential equations (ODE system):
\begin{equation}\label{E:Asys}
	i \tfrac{d}{dt} A_j	= 	 F_j(A_1,A_2) , \quad j=1,2.
\end{equation}
We remark that \eqref{E:Asys} is 
obtained by removing $\partial_x^2 u_j$ from \eqref{E:generalNLS}. (In \cite{KS}, systems of a wider class than \eqref{E:generalNLS}, including derivative nonlinearity, are treated. See also  \cite{HNS,KN,NST}.) 
It is notable that the asymptotic behavior \eqref{E:KSasymptotics} can be understood as a natural extension of the modified scattering-type asymptotic behavior \eqref{E:mscattering} to the systems.
Indeed, the leading term of the asymptotic behavior in \eqref{E:mscattering} for the single case is represented as 
\[
	 (2it)^{-\frac12} e^{i\frac{x^2}{4t}} A \( \tfrac{\log t}{2} ; \tfrac{x}{2t} \)
\]
with a one-parameter family $A$ of solutions  to the ordinary differential equation:
\[
	i\tfrac{d}{dt} A=\lambda |A|^2 A, \quad A(0;\xi) = \widehat{u_+}(\xi).
\]
It can be verified from the fact that the solution is given as $A(\tau;\xi)=\widehat{u_+}(\xi) \exp (-i\lambda |\widehat{u_+}(\xi)|^2 \tau)$.
The ordinary differential equation is obtained by dropping $\partial_x^2$ from \eqref{E:csNLS}.

Furthermore, for systems where the quantity \eqref{E:quadconserve} is monotonic with respect to time, global existence (in one direction of time) can be established in a similar manner.
To distinguish this condition from the weak null gauge condition discussed above, we refer to it as a \emph{dissipative condition}\footnote{This condition is sometimes also referred to as a "weak null condition" (see, e.g., \cite{KaSu}).} in this article.

As mentioned above, the single-equation case has been studied in \cite{HNS,Shimomura}.
In \cite{LS,LS2,SaSu}, the decay of solutions for dissipative systems was obtained.
More recently, the asymptotic behavior and related decay properties of solutions to dissipative NLS equations and systems have been extensively studied (see, e.g., \cite{LNSS1,LNSS2,LNSS5,KiSa,KiSa2,Sa} and references therein).

In this article, we treat systems that do not possess coercive conserved quantities and are not dissipative either. We will specify the conditions for the coefficients later. The simplest model case that satisfies our condition is given by:
\begin{equation}\label{E:NLS1}
	\left\{
	\begin{aligned}
	& (i  \partial_t + \partial_x^2 )u_1 =  |u_2|^2 u_2,  \\
	& (i  \partial_t + \partial_x^2 )u_2 =  |u_1|^2 u_1 .
	\end{aligned}
	\right.
\end{equation}
This system possesses 
\[
	\int_\mathbb{R} \Re \overline{u_1} u_2 \, dx
\]
and
\[
	\int_\mathbb{R} \Re \overline{\partial_x u_1} \partial_x u_2 \, dx + \frac14 \int_\mathbb{R} 
	(|u_1|^4 + |u_2|^4) \, dx
\]
as the conserved quantities.
However, since the quadratic part of these quantities are not coercive, these do not imply the global existence of solutions. 
The corresponding ODE system for this system is given by:
\begin{equation}\label{E:ODE1}
	\left\{
	\begin{aligned}
	&i \tfrac{d}{dt} A_1  = |A_2|^2 A_2,\\
	&i \tfrac{d}{dt} A_2  = |A_1|^2 A_1.
	\end{aligned}
	\right.
\end{equation}
The crucial point is that the solution of this ODE system preserves a \emph{quartic quantity}:
\[
	|A_1|^4 + |A_2|^4.
\]
By utilizing this quantity, we can establish the global existence of small solutions to \eqref{E:NLS1} and prove the asymptotics as in \eqref{E:KSasymptotics}. The systems under consideration are those whose corresponding ODE systems possess an effective quartic conserved quantity. 


In the above example, the conserved quartic quantity for \eqref{E:ODE1} coincides with the integrand of the second term of the conserved “energy-like” quantity for \eqref{E:NLS1}. However, this is merely a coincidence (see Remark \ref{R:coincidence}, below). We also note that this quartic quantity is conserved only for the corresponding ODE system, and its spatial integral is not a conserved quantity for the NLS system. This can be seen, for instance, from the fact that the $L^4\times L^4$-norm of the solutions studied in this article decays in time.

%
The result presented in this article is part of a program aimed at comprehensively understanding the complexity of nonlinear structures in systems of the form \eqref{E:generalNLS}. A classification of general cubic systems of the form \eqref{E:generalNLS} was provided in \cite{MSU2,M}, where all standard forms for systems with mass-type conserved quantities were identified (see also \cite{M2,M3}).

The asymptotic behavior of systems that neither satisfy the weak null gauge condition nor exhibit a dissipative structure has been investigated in \cite{MSU2,KMSU} (see also \cite{KMS,MSU1} for related studies on wave and Klein-Gordon systems). In these works, the authors studied systems consisting of one closed single equation and one linear equation with a time-dependent potential determined by the solution of the single equation.
In contrast, the systems of the form \eqref{E:generalNLS} considered in this article lack such a structure.

\subsection{Main results}
Let $H^{0,1}:= \{ f\in L^2(\R) \ |\ x f\in L^2\}$ be the weighted $L^2$ space with the norm $\|f\|_{H^{0,1}}^2= \|f\|_{L^2}^2 + \|xf\|_{L^2}^2$.
Let $M_3(\R)$ be the set of square matrices of size $3$.

To describe the class of systems which we handle, we recall the matrix-vector representation of a system \eqref{E:generalNLS} introduced in \cite{MSU2}.
The system \eqref{E:generalNLS} can be identified with a pair of a matrix $\mathscr{A} \in M_3 (\R) \simeq \R^9$ and a vector $\mathscr{V} \in \R^3$ as follows:
Given $(\lambda_1, \dots, \lambda_{12}) \in \R^{12}$, we define
\begin{equation}\label{E:matrixC}
\mathscr{A}:=
\begin{bmatrix}
\lambda_2-\lambda_3 & -\lambda_1+\lambda_8-\lambda_9 & -\lambda_7 \\
\lambda_5 & -\lambda_3+\lambda_{11} & -\lambda_9 \\
\lambda_6 & -\lambda_4+\lambda_5+\lambda_{12} & -\lambda_{10}+\lambda_{11}
\end{bmatrix}  
\end{equation}
and
\begin{equation}\label{E:vectorV}
	\mathscr{V}:=
	\begin{bmatrix} \lambda_8-2\lambda_9 \\ \tfrac12 (-\lambda_2+2\lambda_3-\lambda_{10}+ 2\lambda_{11}) \\  \lambda_4-2\lambda_5 \end{bmatrix}. 
\end{equation}

For a square matrix $\mathscr{G}\in M_3(\R)$ of size $3$ and a number $\lambda$, we let $W(\lambda,\mathscr{G})$ be the eigenspace of $\mathscr{G}$ associated with $\lambda$ given by
\[
	W(\lambda, \mathscr{G}) :=\Ker(\mathscr{G}-\lambda I) = \{ {\bf v}\in \R^3 ; \mathscr{G}{\bf v} = \lambda {\bf v} \} \subset \R^3,
\]
where $I$ is the identity matrix.
The eigenspace
$W(\lambda, \mathscr{G})$ is a non-trivial subspace of $\R^3$
if and only if $\lambda$ is an eigenvalue of $\mathscr{G}$.
We further let
\begin{align*}
\mathcal{P}_+&:=\{(a,b,c) \in \R^3 ; ac-b^2>0\}.
\end{align*}
Note that if $(a,b,c) \in \mathcal{P}_+$ then the quadratic form $ax^2+2bxy+cy^2$ is sign-definite.

Our assumption on the system is phrased as follows:
\begin{assumption}\label{A:main}
Suppose that there exists $k>0$ such that
$W(-k^2, \mathscr{A}^2)\cap \mathcal{P}_+\neq \emptyset$.
\end{assumption}
\begin{remark}
The assumption 
implies that $\mathscr{A}$ itself has pure imaginary eigenvalue $\pm i k$.
Since $\mathscr{A}$ has real entries, if $\mathscr{A}^2$ has a negative eigenvalue $-k^2$ then 
$W(-k^2,\mathscr{A}^2)$ is a two-dimensional subspace and hence it is  denoted as $W(-k^2,\mathscr{A}^2)={\bf v}^\perp$ with a non-zero vector ${\bf v} =\ltrans{(v_1,v_2,v_3)}$. 
In this notation,
the property $W(-k^2,\mathscr{A}^2) \cap \mathcal{P}_+\neq \emptyset$ is 
characterized as $v_2^2-4v_1v_3>0$
(see \cite[Theorem 3.1]{MSU1}).
\end{remark}
\begin{remark}
Let us confirm that \eqref{E:NLS1} satisfies the above assumption.
The matrix-vector representation of \eqref{E:NLS1} is
\begin{equation}\label{E:modelAV}
\mathscr{A}=
\begin{bmatrix}
0 & 0 & -1 \\
0 & 0 & 0 \\
1 & 0 & 0
\end{bmatrix}  
,\quad
	\mathscr{V}=
	\begin{bmatrix} 0 \\ 0 \\  0 \end{bmatrix}. 
\end{equation}
One sees that $\ltrans{(1,0,1)} \in W(-1,\mathscr{A}^2) \cap \mathcal{P}_+$ and hence that \eqref{E:NLS1} satisfies Assumption \ref{A:main}.
\end{remark}

The main theorem is as follows. Firstly, it establishes the global existence of solutions and the asymptotic behavior of solutions for general systems. The asymptotic behavior is described using a family of solutions to the corresponding ODE system \eqref{E:Asys}.

\begin{theorem}\label{T:main}
There exists $\delta>0$ and
$\eps_0>0$ such that if $\eps:= \norm{u_{0,1}}_{H^{0,1}} + \norm{u_{0,2}}_{H^{0,1}}$ satisfies $\eps \le \eps_0$ then there exists a unique global solution $(u_1,u_2) \in C(\R;L^2 \times L^2)$ such that $(U(-t)u_1,U(-t)u_2) \in C(\R;H^{0,1} \times H^{0,1})$.
The solution satisfies
\[
	\sup_{t\in \R} (\|u_j(t)\|_{L^2}+|t|^\frac12 \|u_j(t)\|_{L^\infty})
	\lesssim \eps
\]
for $j=1,2$.
Further, there exists $(\psi^\pm_1,\psi_2^\pm) \in (C\cap L^\infty)(\R)$
such that $\tnorm{\psi^\pm_j}_{L^\infty} \lesssim \eps$ and
\begin{equation}\label{E:mainap}
	u_j(t,x) = (2it)^{-\frac12} e^{i \frac{x^2}{4t}} A_j^\pm \(\tfrac{t}{2|t|} \log |t|, \tfrac{x}{2t}\)
	+O(\eps |t|^{-\frac{3}4 +\frac1{2p}+ \delta \eps^2})
\end{equation}
in $L^p_x(\R)$ as $t\to \pm\infty$ for $p \in [2,\infty]$,
where $(A_1^\pm(t,\xi),A_2^\pm(t,\xi))$ is a one-parameter families of solutions to \eqref{E:Asys} subject to the initial condition
\[
	(A_1^\pm(0,\xi),A_2^\pm(0,\xi))=(\psi^\pm_1(\xi),\psi_2^\pm(\xi)) .
\]
\end{theorem}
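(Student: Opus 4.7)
The overall plan is the now-standard factorization/ODE reduction for critical one-dimensional cubic NLS, with the key novelty being a coercive \emph{quartic} conserved quantity for the ODE system \eqref{E:Asys} furnished by Assumption \ref{A:main}, used in lieu of a coercive conserved quadratic mass-type quantity for the NLS system itself.

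First I would set up the profile equation. Let $v_j(t):=U(-t)u_j(t)$ with $U(t):=e^{it\partial_x^2}$ and use the factorization $U(t)=M(t)D(t)\F M(t)$, where $M(t):=e^{ix^2/(4t)}$ and $D(t)f(x):=(2it)^{-1/2}f(x/(2t))$. Define the profile $\alpha_j(t,\xi):=(\F v_j)(t,\xi)$. Using $M(t)v\approx v$ up to errors controlled by $\|xv\|_{L^2}$, the system \eqref{E:generalNLS} becomes
\[
i\partial_t\alpha_j=\tfrac{1}{2t}F_j(\alpha_1,\alpha_2)+R_j,
\]
with an $L^\infty_\xi$-remainder of size $O(t^{-5/4}\|v\|_{L^2}^{5/2}\|xv\|_{L^2}^{1/2})$. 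In the rescaled time $\tau=\tfrac12\log t$, the leading part is exactly the ODE system \eqref{E:Asys}.

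Next I would extract the key structural input from Assumption \ref{A:main}: the existence of $(a,b,c)\in W(-k^2,\mathscr{A}^2)\cap\mathcal{P}_+$ translates, via the algebraic construction underlying the pair $(\mathscr{A},\mathscr{V})$, into a real-valued quartic form $Q(A_1,A_2)$ that is conserved by the flow of \eqref{E:Asys} and is equivalent to $|A_1|^4+|A_2|^4$ thanks to $ac>b^2$. Consequently every ODE solution obeys $\|A(\tau)\|_{L^\infty}\lesssim\|A(0)\|_{L^\infty}$ uniformly in $\tau\in\R$. With the bootstrap quantity
\[
M(T):=\sup_{t\in[1,T]}\sum_{j=1,2}\bigl(\|\alpha_j(t)\|_{L^\infty_\xi}+t^{-\delta\eps^2}\|xv_j(t)\|_{L^2}\bigr),
\]
I would run a continuity argument under the hypothesis $M(T)\le C_0\eps$. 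On the profile side, $\partial_\tau Q(\alpha)$ kills its principal $\tfrac{1}{2t}F$-contribution by ODE conservation, and the remainder is time-integrable under the bootstrap, so coercivity of $Q$ yields $\|\alpha_j(t)\|_{L^\infty_\xi}\lesssim\eps$. On the weighted $L^2$ side, the pointwise bound $\|u_j(t)\|_{L^\infty}\lesssim t^{-1/2}(\|\alpha_j\|_{L^\infty_\xi}+t^{-1/4}\|xv_j\|_{L^2})$ together with the commutator identity $[J,i\partial_t+\partial_x^2]=0$ for the Galilean vector field $J=x+2it\partial_x$ delivers a Gronwall estimate $\|xv_j(t)\|_{L^2}\lesssim\eps\, t^{C\eps^2}$, closing the bootstrap for $\delta>C$. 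The regime $t<0$ is handled symmetrically.

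Granted the a priori bounds above, the integrability of the rescaled remainder in $\tau$ shows that $\Phi_{-\tau}(\alpha(e^{2\tau},\cdot))$ converges pointwise as $\tau\to\pm\infty$, where $\Phi_\tau$ denotes the flow of \eqref{E:Asys}; the limits define $\psi_j^\pm\in(C\cap L^\infty)(\R)$. The pointwise asymptotic \eqref{E:mainap} at $p=\infty$ then follows from the factorization identity $u_j=MD\F Mv_j$ combined with the convergence rate, the $L^2$ endpoint is standard, and interpolation yields the statement for $p\in[2,\infty]$. The main obstacle is concentrated in the middle step: extracting a concrete coercive quartic invariant for \eqref{E:Asys} from the purely spectral hypothesis $W(-k^2,\mathscr{A}^2)\cap\mathcal{P}_+\neq\emptyset$, including a careful accounting of the contribution of the vector $\mathscr{V}$, since in the absence of any coercive conserved quadratic quantity for \eqref{E:generalNLS} this quartic invariant is the only available mechanism to bound the profile uniformly in time.
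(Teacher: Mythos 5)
Your strategy matches the paper's in its essentials: pass to the Fourier--profile variables $w_j=\F U(-t)u_j$, recognize that in logarithmic time they satisfy the ODE system \eqref{E:Asys} up to a remainder that decays like $t^{-5/4+O(\eps^2)}$, use the coercive quartic conserved quantity of the ODE system furnished by Assumption \ref{A:main} to control $\|w_j\|_{L^\infty_\xi}$ and close a bootstrap for the weighted $L^2$ norm, then extract a genuine ODE solution matching the profile asymptotically. Two points in your last steps need attention.

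First, your worry that one must ``carefully account for the contribution of the vector $\mathscr{V}$'' in building the quartic invariant is misplaced: the time derivatives of $(\rho_1,\mathcal{R},\rho_2)$ under the ODE flow involve only $\mathscr{A}$, as \eqref{E:gQQQ1} shows (the vector part multiplies each equation by a \emph{real} scalar potential $\mathcal{V}(A_1,A_2)$ and therefore cancels when one takes $\Im(\ol{A}_j \cdot\ )$). So given a vector $\Gamma\in W(-k^2,\mathscr{A}^2)\cap\mathcal{P}_+$, setting $\wt\Gamma=k^{-1}\mathscr{A}\Gamma$ and $\mathcal{Q}=(\Gamma\cdot(\rho_1,\mathcal{R},\rho_2))^2+(\wt\Gamma\cdot(\rho_1,\mathcal{R},\rho_2))^2$ yields immediately a conserved quantity $\sim(|A_1|^2+|A_2|^2)^2$, with no $\mathscr{V}$-bookkeeping required.

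Second, and more substantively, your construction of $\psi_j^\pm$ via the pullback $\Phi_{-\tau}(\alpha(e^{2\tau},\cdot))$ requires a bound on the Jacobian $D\Phi_{-\tau}$ of the ODE flow, since $\frac{d}{d\tau}\Phi_{-\tau}(\alpha(\tau))=D\Phi_{-\tau}|_{\alpha(\tau)}(-i\wt r(\tau))$; you assert convergence from the integrability of $\wt r$, but that only suffices once one shows $\|D\Phi_{-\tau}\|\lesssim e^{C\eps^2\tau}$ with $C\eps^2<\tfrac12$ (which is true here because the amplitude is $O(\eps)$ and the cubic Lipschitz constant is $O(\eps^2)$, but must be said). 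The paper sidesteps this entirely with a Hayashi--Naumkin-type iteration: define $A^{(0)}=\alpha$, $A^{(1)}=\alpha-i\int_t^\infty\wt r$, and $A^{(n)}=A^{(n-1)}+i\int_t^\infty(F(A^{(n-1)})-F(A^{(n-2)}))$, and show this is a contraction in the ball $Z$ around $\alpha$ because $\|F(X)-F(Y)\|\lesssim\eps^2\|X-Y\|$ and $\int_t^\infty e^{-s/2}\,ds\lesssim e^{-t/2}$. That route never invokes flow Jacobians, and it yields the quantitative closeness $\|A_j-\alpha_j\|_{L^\infty_\xi}\lesssim\eps^3 e^{-(1/2-6\delta\eps^2)\tau}$ that is exactly what is fed back into \eqref{E:mainap}. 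You should either flesh out a Jacobian estimate or switch to the iterative construction. Finally, note that the theorem asserts a uniform-in-time $L^2\times L^2$ bound (not merely $t^{\delta\eps^2}$ growth), and the paper proves this in a separate step (Proposition \ref{P:gb2}) by rerunning the quartic-quantity argument in $L^2_\xi$ rather than $L^\infty_\xi$; your bootstrap as written does not directly yield that uniform bound.
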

We remark that
the above theorem do not provide explicit forms for the asymptotic profiles of the solutions. The explicit form of the profile can be obtained by explicitly solving the ODE system \eqref{E:Asys}.
For this part, we use the argument in \cite{M}.
For the model case \eqref{E:NLS1}, it is possible to determine the asymptotic behavior of the solutions by calculating these quadratic quantities.
For $t\in \R$ and $m\in (0,1)$.
let $\pq(t,m) $ ($\mathrm{p},\mathrm{q}=\mathrm{s},\mathrm{c},\mathrm{d},\mathrm{n}$) be Jacobi elliptic function with modulus $m$. 
See \cite[Appendix A]{M3} for the definition and basic properties which we use (see e.g. \cite{BFBook} for more details).
\begin{theorem}\label{T:explicit}
In the case of 
\eqref{E:NLS1}, the solution $(A_1(\tau), A_2(\tau))$ to the ODE system
\[
	\left\{
	\begin{aligned}
	&i  \tfrac{d}{dt}A_1  = |A_2|^2 A_2,\\
	&i  \tfrac{d}{dt}A_2  = |A_1|^2 A_1,\\
	&	(A_1(0),A_2(0))= (\psi_1,\psi_2)
	\end{aligned}
	\right.
\]
is given as follows: Let $\alpha := 2 \sqrt{ |\psi_1|^4 + |\psi_2|^4 } $, $\mathcal{R}_0:= 2 \Re (\overline{\psi_1}\psi_2)$, and $m:= \frac12 - \frac{\mathcal{R}_0^2}{\alpha^2} \in [0,\frac12]$. If $\mathcal{R}_0 \neq 0 $ then
\begin{align*}
	A_1(\tau) ={}& 2^{-\frac34} \alpha^\frac12 e^{i\theta_0} \sqrt{    \dn\( \alpha \tau + t_0, m \)+ \sqrt{m} \sn\( \alpha \tau + t_0, m \)} \\
	&\times \exp\( -i \frac{\mathcal{R}_0}2 \int_0^\tau \frac{1- \sqrt{m} \sd\( \alpha \sigma + t_0, m \)}{1+ \sqrt{m} \sd\( \alpha \sigma + t_0, m \)}  d\sigma  \),\\
	A_2(\tau) ={}& 2^{-\frac14} \alpha^{-\frac12} e^{i\theta_0} \frac{\mathcal{R}_0 + i \alpha \sqrt{m} \cn (\alpha \tau + t_0,m)}{\sqrt{    \dn\( \alpha \tau + t_0, m \)+ \sqrt{m} \sn\( \alpha \tau + t_0, m \)}} \\
	&\times \exp\( -i \frac{\mathcal{R}_0}2 \int_0^\tau \frac{1- \sqrt{m} \sd\( \alpha \sigma + t_0, m \)}{1+ \sqrt{m} \sd\( \alpha \sigma + t_0, m \)}  d\sigma  \),
\end{align*}
where $\theta_0$ and $t_0$ are suitable real numbers given by initial data.
If $\mathcal{R}_0=0$ then
\begin{align*}
	A_1(\tau)&=2^{-1} \alpha^{\frac12} e^{i\theta_0}\sn ( \tfrac12 (\alpha\tau + t_0),\tfrac12) \sqrt{1+ \nd (\alpha \tau + t_0,\tfrac12)}, \\
	A_2(\tau)&=i 2^{-1} \alpha^{\frac12} e^{i\theta_0} \cd ( \tfrac12 (\alpha\tau + t_0),\tfrac12) \sqrt{1+ \nd (\alpha \tau+ t_0,\tfrac12)},
\end{align*}
where $\theta_0$ and $t_0$ are suitable real numbers given by initial data.
In particular,
\[
	| A_1(\tau)|^4 + |A_2(\tau)|^4 = \alpha^2/2,\quad
	2 \Re (\overline{A_1(\tau)}A_2(\tau)) = \mathcal{R}_0
\]
are independent of $\tau$.
\end{theorem}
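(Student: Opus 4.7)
The strategy is to exploit the two scalar conservation laws (the final assertion of the theorem) to reduce the system to a single first-order quartic ODE which can then be integrated via Jacobi elliptic functions, and finally to recover $A_1,A_2$ together with their phases by quadrature. I would first verify the two conservation laws by direct differentiation. Setting $J:=\Im(\overline{A_1}A_2)$, one computes $\tfrac{d}{d\tau}|A_1|^2 = 2J|A_2|^2$ and $\tfrac{d}{d\tau}|A_2|^2 = -2J|A_1|^2$ from the ODE, which instantly gives $\tfrac{d}{d\tau}(|A_1|^4+|A_2|^4)=0$; next, $\tfrac{d}{d\tau}(\overline{A_1}A_2) = i(|A_2|^4-|A_1|^4)$ is purely imaginary, so $R:=\Re(\overline{A_1}A_2)=\mathcal{R}_0/2$ is conserved, and $\dot J = |A_2|^4-|A_1|^4 = -SD$ with $S:=|A_1|^2+|A_2|^2$, $D:=|A_1|^2-|A_2|^2$.

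The algebraic identities $|A_1|^2|A_2|^2=R^2+J^2$ and $S^2+D^2=2E$ (with $E:=|A_1|^4+|A_2|^4$) give $S^2=E+2(R^2+J^2)$ and $D^2=E-2(R^2+J^2)$, so squaring $\dot J=-SD$ produces the autonomous scalar ODE
\[
(\dot J)^{2} \;=\; E^2 - 4(R^2+J^2)^2.
\]
With $R=\mathcal{R}_0/2$ and the relation $\mathcal{R}_0^2=\alpha^2(\tfrac12-m)$ implied by the definition of $m$, substituting the ansatz $J(\tau)=\tfrac{\alpha\sqrt m}{2}\,\cn(\alpha\tau+t_0,m)$ and using $\tfrac{d}{du}\cn=-\sn\,\dn$ together with $\sn^2+\cn^2=1$ and $\dn^2=1-m\sn^2$ shows the right-hand side collapses to $\alpha^{4}m\,\sn^{2}\dn^{2}$, confirming the ansatz. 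The shift $t_0$ (and the constant $\theta_0$ below) are then fixed by the initial data $(\psi_1,\psi_2)$.

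Once $J$ is known, the same identities give $S=(\alpha/\sqrt 2)\,\dn$ and $|D|=(\alpha/\sqrt 2)\sqrt m\,|\sn|$, so a consistent sign choice yields
\[
|A_1|^2=\tfrac{\alpha}{2\sqrt 2}(\dn+\sqrt m\,\sn),\qquad |A_2|^2=\tfrac{\alpha}{2\sqrt 2}(\dn-\sqrt m\,\sn).
\]
Rationalising $|A_2|^2$ by multiplying by $(\dn+\sqrt m\,\sn)$ and using $\dn^2-m\sn^2=1-2m\sn^2=\tfrac{2}{\alpha^2}(\mathcal{R}_0^2+\alpha^2 m\cn^2)$ brings $|A_2|^2$ into the quotient form displayed. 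For the phases I would write $A_j=|A_j|e^{i\phi_j}$; taking the imaginary part of $\dot A_1/A_1=-i|A_2|^2(R+iJ)/|A_1|^2$ gives
\[
\dot\phi_1 \;=\; -R\,\tfrac{|A_2|^2}{|A_1|^2} \;=\; -\tfrac{\mathcal{R}_0}{2}\,\tfrac{\dn-\sqrt m\,\sn}{\dn+\sqrt m\,\sn} \;=\; -\tfrac{\mathcal{R}_0}{2}\,\tfrac{1-\sqrt m\,\sd}{1+\sqrt m\,\sd},
\]
which is exactly the exponent appearing in the statement. The expression for $A_2$ is then read off from $A_2=(R+iJ)A_1/|A_1|^2$.

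The case $\mathcal{R}_0=0$ corresponds to $R=0$, $m=\tfrac12$, so the phase drift disappears; the relation $\phi_2-\phi_1=\arg(R+iJ)=\pm\pi/2$ accounts for the factor $i$ separating $A_1$ from $A_2$, and translating $\dn+\sqrt{1/2}\,\sn$ into the stated form involving $\sn$ and $\cd$ of the half-argument multiplied by $\sqrt{1+\nd}$ is a matter of invoking the standard Jacobi half-argument identities at modulus $\tfrac12$ (as in \cite{M}). The main technical obstacles are the identification of the correct substitution $J=\tfrac{\alpha\sqrt m}{2}\,\cn(\alpha\tau+t_0,m)$, which turns the quartic ODE into a first-order Jacobi $\cn$ equation, and the chain of algebraic identities relating $\dn+\sqrt m\,\sn$, $\dn^2-m\sn^2$, and $(1-\sqrt m\,\sd)/(1+\sqrt m\,\sd)$ that bring the final expressions to the compact forms displayed.
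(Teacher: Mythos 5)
Your strategy and the paper's are essentially the same: both reduce to the quadratic quantities (which the paper writes as $(\rho_1,\rho_2,\mathcal R,\mathcal I)$), use the two conserved quantities, integrate the resulting closed system by Jacobi elliptic functions, and then recover the phase by quadrature. The paper shortcuts the middle of this by observing that $(\sqrt2\rho_-,\mathcal I,\sqrt2\rho_+)$ solves the Euler-top system $f'=gh$, $g'=-hf$, $h'=-fg$ and then cites \cite[Lemma~2.4]{M3} for its explicit solution and \cite[Theorem~4.1]{M3} for reconstructing $A_1,A_2$ from the quadratic quantities; you instead derive the single scalar equation $(\dot J)^2=E^2-4(R^2+J^2)^2$ for $J=\Im(\overline{A_1}A_2)$ and verify the $\cn$ ansatz directly, which is the same content in a more self-contained form. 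Two places where your sketch is lighter than what the citations provide: extracting $D$ from $D^2=\tfrac{\alpha^2 m}{2}\sn^2$ requires you to justify the sign (the system $\rho_-'=\mathcal I\rho_+$, $\mathcal I'=-2\rho_+\rho_-$ pins down $D$ and $J$ as $\sn$ and $\cn$ with the right relative phase, so this is fixable); and in the $\mathcal R_0=0$ case the phase quadrature $\dot\phi_1=-R|A_2|^2/|A_1|^2=0$ is only valid away from the zeros of $|A_1|$, across which $A_1$ changes sign — the cited \cite[Theorem~4.1]{M3} tracks this through the count $(-1)^{k_1(t)}$, and the passage to the half-argument form using $\sn(\tfrac12(\alpha\tau+t_0),\tfrac12)$ is precisely what encodes those sign flips. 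That final step needs a sentence or two beyond "standard half-argument identities," but the overall plan is correct.
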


\begin{remark}
The solution $(A_1, A_2)$ exhibits periodic behavior for certain values of $m$. When $m=0$ or $1/2$, the solution is periodic in $\tau$. For other values of $m$ within the range $(0,1/2)$, both components of a solution have periodic moduli with a period of $4K(m)/\alpha$, where $K(m)=\int_0^{\pi/2} (1-m \sin \theta)^{-1/2} d\theta$ represents the complete elliptic integral of the first kind. Consequently, the solution is periodic if and only if the phase factor is periodic, and the period is given by $4K(m)/\alpha$ multiplied by a rational number.
This periodicity condition can be expressed as:
\[
	  \sqrt{\tfrac12-m} \int_0^{4K(m)} \frac{1- \sqrt{m} \sd\(  \sigma , m \)}{1+ \sqrt{m} \sd\(  \sigma , m \)}  d\sigma  \in \pi \Q.
\]
\end{remark}

\subsection{Matrix-Vector representation and weak null condition}

A significant advancement in this article is the exclusion of the weak null gauge condition, which is commonly employed in the analysis of asymptotic behavior (see \cite{NST,KS,M3,U}).
The precise formulation of the condition in the current context is as follows:
\begin{enumerate}
\item[(H0)]
There exists a
positive Hermitian matrix $\mathscr{H}$ such that
\[
	\Im \(\begin{bmatrix} 
	\overline{u_1} & \overline{u_2} 
	\end{bmatrix}
	\mathscr{H}
	\begin{bmatrix}
	F_1 (u_1,u_2) \\
	F_2 (u_1,u_2)
	\end{bmatrix}
	\)
	=0
\]
holds for all $(u_1,u_2) \in \C^2$.
\end{enumerate}
One major benefit of this condition is that it guarantees the global existence of $L^2$ solutions to the NLS system \eqref{E:generalNLS}.
More specifically, if the nonlinearity satisfies this condition, the quantity
\begin{equation}\label{E:nullmass}
\frac12 \int_\R \begin{bmatrix} \overline{u_1} & \overline{u_2} \end{bmatrix} \mathscr{H} \begin{bmatrix} u_1 \ u_2 \end{bmatrix} dx
\end{equation}
becomes a conserved quantity of the corresponding NLS system \eqref{E:generalNLS}.
Since $\mathscr{H}$ is a positive Hermitian matrix, this quantity is equivalent to $\norm{u_1}{L^2}^2 + \norm{u_2}{L^2}^2$, providing a sufficient condition for the global existence of $L^2$ solutions. (See, e.g., \cite{CazBook}.)

We prove in Corollary \ref{C:H0S0} below that, as long as we consider the system \eqref{E:generalNLS}, the condition (H0) is equivalent to the following condition.
\begin{enumerate}
\item[(S0)] 
There exists a positive real-symmetric matrix $\mathscr{S}$ such that
\[
	\Im \(\begin{bmatrix} 
	\overline{u_1} & \overline{u_2} 
	\end{bmatrix}
	\mathscr{S}
	\begin{bmatrix}
	F_1 (u_1,u_2) \\
	F_2 (u_1,u_2)
	\end{bmatrix}
	\)
	=0
\]
holds for all $(u_1,u_2) \in \C^2$.
\end{enumerate}
The validity of (S0)
 is easily checked for \eqref{E:generalNLS}
by looking at the matrix-vector representation of the system as the following equivalence is valid:
\[
	\Im \(\begin{bmatrix} 
	\overline{u_1} & \overline{u_2} 
	\end{bmatrix}
	\begin{bmatrix}
	a & b \\ b & c
	\end{bmatrix}
	\begin{bmatrix}
	F_1 (u_1,u_2) \\
	F_2 (u_1,u_2)
	\end{bmatrix}
	\)
	= 0
	\Leftrightarrow 
\ltrans{(a,b,c)} \in \Ker \mathscr{A}.
\]
From this property, one easily sees that (S0) can be stated in the language of the matrix $\mathscr{A}$ as follows:
\begin{enumerate}
\item[(S1)]
$W(0,\mathscr{A}) \cap \mathcal{P}_+ \neq \emptyset$.
\end{enumerate}
The systems in the scope of this article, i.e., the systems satisfy the Assumption \ref{A:main}, do not necessarily satisfy this condition.
Indeed, the model system \eqref{E:NLS1} does not satisfy (S1).
This can be seen by \eqref{E:modelAV}: One has
$W(0;\mathscr{A}) = \Span\{ \ltrans{(0,1,0)}\}$ and hence $W(0;\mathscr{A})\cap \mathcal{P}_+=\emptyset$.
Further, a necessary condition for (S1) is $\rank \mathscr{A}\le 2$. Our model contains the system for which $\rank \mathscr{A}=3$.

Systems that satisfy the dissipative condition are also extensively studied \cite{HNS,SaSu,LS,LS2,Shimomura,Sa,LNSS1,LNSS2,LNSS5,KiSa,KiSa2}.
The condition is as follows:
\begin{enumerate}
\item[(D0)]
There exists a positive Hermitian matrix $\mathscr{H}$ such that
\[
	\Im \(\begin{bmatrix} 
	\overline{u_1} & \overline{u_2} 
	\end{bmatrix}
	\mathscr{H}
	\begin{bmatrix}
	F_1 (u_1,u_2) \\
	F_2 (u_1,u_2)
	\end{bmatrix}
	\)
	\le 0
\]
holds for all $(u_1,u_2) \in \C^2$.
\end{enumerate}
It is obvious that this condition is weaker than (H0).
This condition does not assure the existence of a conserved quantity.
However, it claims that the non-increase of the quantity \eqref{E:nullmass} as time increase.
Hence, the condition yields the global existence of solutions to \eqref{E:generalNLS}
for positive time direction.
The asymptotic behavior of solutions to NLS systems is studied 
under this condition.
Assumption \ref{A:main} does not implies (D0). Again, 
\eqref{E:NLS1} is a concrete example.


\subsection{Conservation of a quartic quantity for the ODE system}

In our model, we utilize the analysis of the ODE system \eqref{E:Asys} to derive the global existence of solutions to \eqref{E:generalNLS}. This approach is similar to those used in the previous studies where coercive conservation laws (or non-increasing property of $L^2$-norm)
are applicable. 
However, the structure in the ODE system that realizes this is different.
In our case, there exists a conserved quantity which is \emph{quartic} with respect to the unknown.
We describe how the quartic conserved quantity for the solutions of the ODE system is obtained under the assumptions of the main theorem.

To this end, we recall one result from \cite{MSU2}.
Let $(A_1,A_2) $ be a solution to the system of \eqref{E:Asys}.
Let $\mathscr{A}$ be the matrix given by \eqref{E:matrixC}.
Let
\begin{equation}\label{E:quaddef}
	\rho_1 = |A_1|^2 , \quad \rho_2 = |A_2|^2, \quad \mathcal{R} = 2\Re (\overline{A_1} A_2), \quad \mathcal{I} = 2 \Im (\overline{A_1} A_2).
\end{equation}
We have the following identity, which is one benefit of the matrix-vector representation.
\begin{proposition}[\cite{MSU2}]\label{P:quadODE}
For any $(a,b,c) \in \R^3$, one has
\begin{equation}\label{E:gQQQ1}
	\frac{d}{dt} \begin{bmatrix} \rho_1 & \mathcal{R} & \rho_2  \end{bmatrix}
	\begin{bmatrix} a \\ b \\ c \end{bmatrix}
	= \mathcal{I} \begin{bmatrix} \rho_1 & \mathcal{R} & \rho_2  \end{bmatrix}
	\mathscr{A} \begin{bmatrix} a \\ b \\ c \end{bmatrix}.
\end{equation}
\end{proposition}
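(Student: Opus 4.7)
The plan is a direct verification, keyed to the matrix-vector encoding \eqref{E:matrixC}. Differentiating the three quadratic quantities in \eqref{E:quaddef} along the ODE system \eqref{E:Asys} gives
\[
\tfrac{d}{dt}\rho_j = 2\Im(\overline{A_j}F_j), \qquad \tfrac{d}{dt}\mathcal{R} = 2\Im(\overline{A_1}F_2) + 2\Im(F_1\overline{A_2}),
\]
so the claim reduces to showing that each of these three expressions equals $\mathcal{I}$ times a specific linear combination of $\rho_1, \mathcal{R}, \rho_2$. By linearity in $(a,b,c)$, it then suffices to verify \eqref{E:gQQQ1} component by component, i.e., one column of $\mathscr{A}$ at a time.

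Substituting the explicit nonlinearity \eqref{E:nonlinearity}, each of the three derivatives becomes a finite sum of cubic monomials in $A_1, \overline{A_1}, A_2, \overline{A_2}$, and only three algebraic identities are needed to simplify them:
\[
2\Im(|A_k|^2 \overline{A_1}A_2) = \rho_k \mathcal{I}, \qquad 2\Im(|A_k|^2 A_1\overline{A_2}) = -\rho_k \mathcal{I}, \qquad 2\Im((\overline{A_1}A_2)^2) = \mathcal{R}\mathcal{I},
\]
together with the fact that the self-interaction monomials $|A_1|^4, |A_1|^2|A_2|^2, |A_2|^4$ are real-valued and vanish upon taking imaginary parts. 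Collecting the surviving contributions, $\frac{d}{dt}\rho_1$ produces $\mathcal{I}$ times the first column $(\lambda_2-\lambda_3,\, \lambda_5,\, \lambda_6)$ of $\mathscr{A}$, $\frac{d}{dt}\mathcal{R}$ produces the second column $(-\lambda_1+\lambda_8-\lambda_9,\, -\lambda_3+\lambda_{11},\, -\lambda_4+\lambda_5+\lambda_{12})$, and $\frac{d}{dt}\rho_2$ produces the third column $(-\lambda_7,\, -\lambda_9,\, -\lambda_{10}+\lambda_{11})$.

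The proof is entirely algorithmic, and the only real obstacle is clerical: one must keep careful track of signs in the cross terms, in particular distinguishing $2\Im((\overline{A_1}A_2)^2)=\mathcal{R}\mathcal{I}$ from $2\Im((A_1\overline{A_2})^2)=-\mathcal{R}\mathcal{I}$, and recognizing that the $\lambda_3$ and $\lambda_9$ contributions come with the opposite sign of $\lambda_2$ and $\lambda_8$, respectively. The definition \eqref{E:matrixC} of $\mathscr{A}$ is effectively reverse-engineered so as to match this computation, so once the three imaginary-part identities above are in hand the result is immediate.
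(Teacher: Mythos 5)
Your proof is correct. The paper itself states Proposition \ref{P:quadODE} as a citation to \cite{MSU2} and offers no internal proof, so there is no in-paper argument to compare against; but your direct verification is exactly the computation one would reproduce from that reference. The reduction $\frac{d}{dt}\rho_j = 2\Im(\overline{A_j}F_j)$ and $\frac{d}{dt}\mathcal{R} = 2\Im(F_1\overline{A_2}) + 2\Im(\overline{A_1}F_2)$ is correct (using $\frac{d}{dt}A_j = -iF_j$), the three monomial identities $2\Im(|A_k|^2\overline{A_1}A_2)=\rho_k\mathcal{I}$, $2\Im(|A_k|^2 A_1\overline{A_2})=-\rho_k\mathcal{I}$, $2\Im((\overline{A_1}A_2)^2)=\mathcal{R}\mathcal{I}$ are exactly the algebra needed, and expanding $\overline{A_1}F_1$, $\overline{A_2}F_2$, and $F_1\overline{A_2}+\overline{A_1}F_2$ term by term produces precisely the three columns of $\mathscr{A}$ as you list them. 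The one thing worth making explicit in a final write-up is the sign bookkeeping you flag at the end: terms like $\lambda_3 A_1^2\overline{A_2}$ in $F_1$ and $\lambda_9 A_1^2\overline{A_2}$ in $F_2$ contribute to $\rho_1$-type coefficients with the opposite sign from $\lambda_2$ and $\lambda_8$ via $2\Im(|A_1|^2 A_1\overline{A_2})=-\rho_1\mathcal{I}$, and $\lambda_9$ also yields $2\Im((A_1\overline{A_2})^2)=-\mathcal{R}\mathcal{I}$ in $\overline{A_2}F_2$; these are precisely the minus signs built into \eqref{E:matrixC}.
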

Let us derive the conservation of a quartic quantity.
Suppose that $\mathscr{A}^2$ has a negative eigenvalue $-k^2$ as in Assumption \ref{A:main}.
Pick a non-zero vector $\Gamma:=\ltrans{(\gamma_1,\gamma_2 ,\gamma_3)} \in W(-k^2,\mathscr{A}^2) $
and define another eigenvector
$\widetilde{\Gamma}=\ltrans{(	\widetilde{\gamma}_1,\widetilde{\gamma}_2,\widetilde{\gamma}_3
)}$ by
\begin{equation}\label{E:tgamma}
	\widetilde{\Gamma}
	:= k^{-1}\mathscr{A} \Gamma  \in W(-k^2,\mathscr{A}^2).
\end{equation}
Then, one sees from \eqref{E:gQQQ1} that
\[
	(\gamma_1 \rho_1 + \gamma_2 \mathcal{R} + \gamma_3 \rho_2)' = k \mathcal{I} (\widetilde{\gamma}_1 \rho_1 + \widetilde{\gamma}_2 \mathcal{R} + \widetilde{\gamma}_3 \rho_2)
\]
and
\[
	(\widetilde{\gamma}_1 \rho_1 + \widetilde{\gamma}_2 \mathcal{R} + \widetilde{\gamma}_3 \rho_2)'
 = - k \mathcal{I} (\gamma_1 \rho_1 + \gamma_2 \mathcal{R} + \gamma_3 \rho_2)
\]
hold.
These two identities immediately give us the conservation of 
a quartic quantity
\begin{equation}\label{E:Q4def}
	\mathcal{Q}:=(\gamma_1 \rho_1 + \gamma_2 \mathcal{R} + \gamma_3 \rho_2)^2 + (\widetilde{\gamma}_1 \rho_1 + \widetilde{\gamma}_2 \mathcal{R} + \widetilde{\gamma}_3 \rho_2)^2.
\end{equation}
Further,
by Assumption \ref{A:main}, the vector $\Gamma$ can be chosen so that $\Gamma \in \mathcal{P}_+$.
Under this choice, one obtains
\[
	 |A_1|^2 + |A_2|^2 \sim |\gamma_1 \rho_1 + \gamma_2 \mathcal{R} + \gamma_3 \rho_2| \le \mathcal{Q}^{1/2} .
\]
Hence, the conservation of $\mathcal{Q}$ gives us the desired global bound on solutions to a system \eqref{E:Asys}. 
We use this new kind of conserved quantity to prove our main theorem.
We discuss this conserved quantity in more detail in Section \ref{S:pQ}.
It will turn out that this quantity is determined independently on the choice of $\Gamma \in W(-k^2,\mathscr{A}^2)\setminus\{0\}$ up to a positive constant. Further, $\mathcal{Q}$ is an invariant polynomial
(see Proposition \ref{P:pQ}).

%


Here, we would like to underline the center role played by the characteristic properties, such as eigenvalues and eigenspaces, of the linear map represented by matrix $\mathscr{A}$. This emphasizes that the matrix representation is more than just a square arrangement of coefficients of the system; it adeptly reflects the inherent nature of the system.
In our previous results, we focused on the fact that the rank of the matrix $\mathscr{A}$ indicates the number of (quadratic) conserved quantities for \eqref{E:generalNLS} and \eqref{E:Asys}. 
From the perspective of eigenvalues, this involves examining whether the matrix $\mathscr{A}$ possesses $0$ as an eigenvalue and, if so, determining the dimension of the eigenspace associated with the eigenvalue $0$, i.e., of $\Ker \mathscr{A}$.
%
%

\subsection{Standard forms of systems satisfying Assumption \ref{A:main}}

Let us now look for the standard form of the systems satisfying Assumption \ref{A:main}.
\begin{theorem}\label{T:std}
Let $(\lambda_1,\dots,\lambda_{12})$ be a combination of a system of the form \eqref{E:generalNLS} with unknown $(u_1,u_2)$
such that Assumption \ref{A:main} holds. Then, there exists an  invertible real $2\times 2$ matrix $\mathscr{M}$ such that if we introduce $(v_1,v_2)$ by
\[
	\begin{bmatrix} v_1 \\ v_2 \end{bmatrix}
	= \mathscr{M} \begin{bmatrix} u_1 \\ u_2 \end{bmatrix}
\]
then the system for $(v_1,v_2)$ 
is of the following form:
\begin{equation}\label{E:stdNLS}
\left\{
\begin{aligned}
	(i  \partial_t + \partial_x^2 )u_1={}&
(\eta_2 \sinh \eta_1 - \sigma \eta_3\cosh \eta_1 - \eta_2 \lambda_0) |u_1|^2 u_1 +\sinh \eta_1 (2|u_1|^2 u_2 + u_1^2\ol{u_2})\\&+ \sigma \cosh  \eta_1 |u_2|^2u_2
	 - \lambda_0 \Re (\overline{u_1} u_2) u_1 +\mathcal{V}(u_1,u_2) u_1, \\
	(i  \partial_t + \partial_x^2 )u_2={}&
\sigma \cosh \eta_1 |u_1|^2 u_1+ \sinh \eta_1 (2 u_1 |u_2|^2 + \ol{u_1}u_2^2) \\&+ ( -\sigma \eta_2 \cosh \eta_1+\eta_3 \sinh \eta_1 + \eta_3 \lambda_0 ) |u_2|^2u_2
	 + \lambda_0 \Re (\overline{u_1} u_2) u_2 + \mathcal{V}(u_1,u_2)u_2,
\end{aligned}
\right.
\end{equation}
where $\sigma\in\{\pm1\}$, and $\eta_1,\eta_2,\eta_3 \in \R$, $\lambda_0\in \R$,
and
$\mathcal{V}(u_1,u_2)=q_1 |u_1|^2 + 2q_2 \Re (\overline{u_1}u_2)  + q_3 |u_2|^2$ 
($q_1,q_2,q_3 \in \R$)
is a real-valued quadratic potential.
The matrix-vector representaion of this system is as follows:
\begin{equation}\label{E:stdAV}
	\mathscr{A}= \begin{bmatrix}
	\sinh \eta_1 & -\eta_2 \sinh \eta_1 + \sigma\eta_3\cosh \eta_1 + \eta_2 \lambda_0 & -\sigma \cosh \eta_1 \\
	0 & \lambda_0 & 0 \\
	\sigma \cosh  \eta_1& -\sigma \eta_2 \cosh \eta_1+\eta_3 \sinh \eta_1 + \eta_3 \lambda_0 & - \sinh \eta_1
	\end{bmatrix}, \quad
	\mathscr{V}= \begin{bmatrix}
	q_1 \\ q_2 \\ q_3
	\end{bmatrix}.
\end{equation}
\end{theorem}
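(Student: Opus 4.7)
The plan is to exploit the induced action of $GL_2(\R)$ on the matrix-vector pair $(\mathscr{A},\mathscr{V})$ developed in \cite{MSU2}, and use Assumption \ref{A:main} to single out a specific $\mathscr{M}\in GL_2(\R)$ that brings $\mathscr{A}$ into the block form of \eqref{E:stdAV}. First, applying Proposition \ref{P:quadODE} in both the $(u_1,u_2)$ and $(v_1,v_2)=\mathscr{M}(u_1,u_2)$ coordinates yields the conjugation rule
\[
\tilde{\mathscr{A}}=(\det\mathscr{M})^{-1}(\mathscr{B}^T)^{-1}\mathscr{A}\,\mathscr{B}^T,
\]
where $\mathscr{B}\in M_3(\R)$ is the symmetric square of $\mathscr{M}$ acting on the quadratic vector $(\rho_1,\mathcal{R},\rho_2)$; a parallel linear action transforms $\mathscr{V}$, but since the coefficients $(q_1,q_2,q_3)$ in \eqref{E:stdAV} are unconstrained, the image of $\mathscr{V}$ needs no further normalization beyond relabeling.

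Next I would bring the invariant plane into standard position. By Assumption \ref{A:main}, $W:=W(-k^2,\mathscr{A}^2)$ is a $2$-dimensional $\mathscr{A}$-invariant subspace meeting $\mathcal{P}_+$. Since the form $\alpha\gamma-\beta^2$ on $\R^3$ has signature $(1,2)$ and admits a positive vector in $W$, its restriction to $W$ has signature $(1,1)$, so $W$ meets the null cone $\{\alpha\gamma=\beta^2\}$ in two distinct real lines. Each null line contains a Veronese representative $(a^2,ab,b^2)^T$; take the two lines to be spanned by $(a^2,ab,b^2)^T$ and $(c^2,cd,d^2)^T$, which are automatically linearly independent (equivalently $ad-bc\neq 0$). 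Letting $\mathscr{M}_0\in GL_2(\R)$ have rows $(a,b)$ and $(c,d)$, one checks that $\mathscr{B}_0^T V=W$ for $V:=\Span\{e_1,e_3\}$; hence the invariant plane in the new coordinates coincides with $V$, forcing $\tilde{\mathscr{A}}_{21}=\tilde{\mathscr{A}}_{23}=0$, and $\tilde{\mathscr{A}}_{22}$ becomes the remaining eigenvalue of $\mathscr{A}$ (rescaled by $(\det\mathscr{M})^{-1}$).

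I would then use the residual rescaling $\mathscr{M}\mapsto\mathrm{diag}(s,t)\mathscr{M}_0$ to complete the normalization of $\tilde{\mathscr{A}}|_V$. Imposing $\det\mathscr{M}=k$ forces the $V$-eigenvalues to be $\pm i$, so the block has trace $0$, determinant $1$, and hence opposite-signed off-diagonal entries $q,r$; the independent condition $s^4/t^4=-q/r$ antisymmetrizes them, yielding
\[
\tilde{\mathscr{A}}|_V=\begin{bmatrix}\sinh\eta_1 & -\sigma\cosh\eta_1\\ \sigma\cosh\eta_1 & -\sinh\eta_1\end{bmatrix}
\]
for unique $\eta_1\in\R$ and $\sigma\in\{\pm1\}$. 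Setting $\lambda_0:=\tilde{\mathscr{A}}_{22}$, the middle-column entries $\tilde{\mathscr{A}}_{12},\tilde{\mathscr{A}}_{32}$ remain free; matching them against the expressions in \eqref{E:stdAV} reduces to a $2\times 2$ linear system in $(\eta_2,\eta_3)$ of determinant $\lambda_0^2+1>0$, hence uniquely solvable. Finally, the bijective explicit inverse of the map \eqref{E:matrixC}--\eqref{E:vectorV} converts $(\tilde{\mathscr{A}},\tilde{\mathscr{V}})$ back into a nonlinearity, which a direct (tedious but routine) computation identifies with \eqref{E:stdNLS}.

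The main obstacle lies in simultaneously achieving the two normalizations of the scaling step, and more broadly in extracting the geometric content of Assumption \ref{A:main}. The existence of two distinct real null lines in $W$ rests on the signature $(1,1)$ of the restricted form, equivalent to the characterization $v_2^2-4v_1v_3>0$ from \cite[Theorem 3.1]{MSU1}; and the solvability of $q=-r$ after fixing $\det\mathscr{M}=k$ hinges on $q,r$ having opposite signs, which is guaranteed by the $\pm i$ eigenvalue structure on $V$. Without these two geometric inputs, the orbits of the symmetric-square $GL_2(\R)$-action on $\mathscr{A}$ would not admit the clean parameterization \eqref{E:stdAV}.
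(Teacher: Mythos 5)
Your proposal is correct, and it reaches the same normal form \eqref{E:stdAV} by a genuinely different route. The paper proceeds by a sequence of three elementary transformations: a scalar dilation to set $k=1$, an upper-triangular shear chosen so that one null line of the invariant plane $W$ becomes $\ltrans{(1,0,0)}$, and then a lower-triangular shear-plus-scaling (choosing $r$ so that $r^4\widetilde\gamma_3=\sigma\cosh\eta_1$) that finishes the job. At each step the paper extracts one piece of information from $W\cap\mathcal{P}_0\neq\emptyset$ and from $W\cap\mathcal{P}_+\neq\emptyset$ (the latter through the observation $\widetilde\gamma_3\neq 0$). You instead exploit the Lorentzian geometry of the discriminant form $Q(a,b,c)=ac-b^2$ of signature $(1,2)$: the assumption $W\cap\mathcal{P}_+\neq\emptyset$ forces the restriction $Q|_W$ to be nondegenerate of signature $(1,1)$ (a $2$-dimensional subspace of a Lorentzian $\R^{1,2}$ cannot be causal), so $W$ contains exactly two null lines, each a Veronese point; their preimages under $\nu$ give the two rows of $\mathscr{M}_0$ in one shot, and the residual diagonal scaling then fixes $\det\mathscr{M}=k$ and antisymmetrizes the off-diagonal $(1,3)$--$(3,1)$ entries (which have opposite signs, as you correctly deduce from the $\pm i$ spectrum). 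This is cleaner geometrically and makes the role of Assumption~\ref{A:main} more transparent, at the cost of stating, without proof, the facts that a plane meeting the positive cone has signature-$(1,1)$ restriction and that distinct null lines have linearly independent Veronese representatives; both are true and easy, but you should spell them out. Two small points worth making explicit: the transformation rule you write should be matched carefully against Proposition~\ref{P:change} (your $\mathscr{B}^T$ is $\det\mathscr{M}\cdot\mathscr{D}(\mathscr{M})^{-1}$), and the final step—reading off $\eta_2,\eta_3$ from the free middle-column entries via the linear system of determinant $\lambda_0^2+1$—is correct and, in effect, equivalent to the paper's identification of $\ltrans{(\eta_2,1,\eta_3)}$ as a real eigenvector.
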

We remark that \eqref{E:NLS1} corresponds to the case
$(\sigma,\eta_1,\eta_2,\eta_3,\lambda_0,q_1,q_2,q_3)=(1,0,0,0,0,0,0,0)$ of \eqref{E:stdNLS}.
For any choice of parameters, \eqref{E:stdNLS} satisfies Assumption \ref{A:main}.
The above form is chosen so that several mathematical objects associated with the corresponding matrix-vector representaion become clear:
\begin{itemize}
\item
The characteristic polynomial of $\mathscr{A}$ 
is $(\lambda^2+1)(\lambda- \lambda_0)$;
\item
$W(-1,\mathscr{A}^2)$ is spanned by $\ltrans{(1,0,0)}$ and $\ltrans{(0,0,1)}$, and hence $\ltrans{(1,0,1)} \in W(-1,\mathscr{A}^2) \cap \mathcal{P}_+$;
\item $W(\lambda_0,\mathscr{A})$ is spanned by $\ltrans{(\eta_2,1,\eta_3)}$.
\end{itemize}
Further, the quartic quantity for the corresponding ODE system defined by \eqref{E:Q4def} is a constant multiple of
\begin{equation}\label{E:stdQQ}
	 |A_1|^4 + 2\sigma \tanh \eta_1 |A_1|^2|A_2|^2 + |A_2|^4.
\end{equation}

The matrix $\mathscr{A}$ in \eqref{E:stdAV} satisfies
$\rank \mathscr{A} = 2$ if $\lambda_0=0$ and $\rank \mathscr{A}=3$ if $\lambda_0\neq0$.
The complete classification of the system \eqref{E:generalNLS} with $\rank \mathscr{A}=2$ is obtained in \cite{M}.
The above standard form is different from the one in \cite{M}, where the standard form is dirived by simplifying the space $W(0,\mathscr{A})$.
We disucss the comparison of
the standard forms in Appendix \ref{S:compare}.
\begin{remark}
Note that if $\lambda_0=0$
and $\eta_2\eta_3>1$ then there is a coercive mass-like conserved quantity, and hence corresponds to the case treated in \cite{LS,KS,M2}.
\end{remark}

\begin{remark}\label{R:coincidence}
The system \eqref{E:stdNLS} possesses an energy-like conserved quantity if and only if $\lambda_0=0$ and 
$
	(q_1,q_2,q_3) = q (\eta_2,1,\eta_3)
$
holds for some $q\in\R$ (see \cite[Proposition A.9]{MSU2}).
The quartic polynomial that appears as the integrand of the conserved energy-like quantity is 
\begin{multline*}
	(\eta_2^2(\sinh \eta_1 + q) + \sigma(1-\eta_2 \eta_3)\cosh \eta_1)|u_1|^4
	+4 \eta_2(\sinh \eta_1 + q) |u_1|^2\Re\overline{u_1}u_2 \\
	+2(2 \sinh \eta_1 + \eta_2\eta_3 + q) |u_1|^2|u_2|^2
	+ 2(\sinh \eta_1 + q)\Re (\overline{u_1}^2 u_2^2) \\
	+4 \eta_3 (\sinh \eta_1 + q) |u_2|^2\Re\overline{u_1}u_2 + (\eta_3^2(\sinh \eta_1 + q) + \sigma(1-\eta_2 \eta_3)\cosh \eta_1)|u_2|^4.
\end{multline*}
It is not a constant multiple of \eqref{E:stdQQ} in general.
\end{remark}

\medskip

The rest of the article is organized as follows:
Sections \ref{S:gb} and \ref{S:ab} are devoted to the proof of Theorem \ref{T:main}.
In Section \ref{S:gb}, we first summarize notations, basic facts, and properties such as the local well-posedness of \eqref{E:generalNLS}
and then we establish the global-existence-part of the main theorem.
In Section \ref{S:ab}, we complete the proof of the main result by establishing the asymptotic formula \eqref{E:mainap}.
We then prove Theorem \ref{T:explicit} in Section \ref{S:exp}.
In Section \ref{S:pre}, 
we recall
the transformation formula for the matrix-vector representation
and investigate the property of the quartic quantity $\mathcal{Q}$ in Section \ref{S:pQ}.
We then turn to the investigation of property of the systems in Section \ref{S:ps}.
We prove the equivalence of the condition (H0) and (S0). 
We finally prove
 Theorem \ref{T:std} in Section \ref{S:std}.
As an appendix, we summarize the intersection of the systems treated in this article and those previously studied.

\section{Global existence}\label{S:gb}

Now we turn to the proof of Theorem \ref{T:main}.
In this section, we establish the global-existence part of the theorem.
We also establish the global bound of the solution.

\subsection{Notations}
We introduce operators.
Let
\[
	U(t) = e^{it\partial_x^2}
\]
be the Schr\"odinger group.
The following factorization is known:
\[
	U(t) = M(t) D(t) \mathcal{F} M(t),
\]
where $\mathcal{F}$ stands for the standard Fourier transform on $\R$, $M(t) = e^{i \frac{x^2}{4t}} \times$ is a multiplication operator, 
and $D(t)$ is a dilation operator given by
\[
	(D(t)f)(x) = (2it)^{-\frac12} f(\tfrac{x}{2t}).
\]
Note that $U(-\frac1{4t}) = \mathcal{F}^{-1} M(t) \mathcal{F}$.
Let us define an operator $J(t)$ as
\[
	J(t) := x+ 2it \partial_x.
\]
We have the identity
\[
	J(t)=U(t)x U(-t)
	= M(t) D(t)i \partial_xD(t)^{-1} M(t)^{-1}= M(t) 2it \partial_x M(-t)
	,
\]
where the last two representation makes sense for $t\neq0$.

\subsection{Local well-posedness of \eqref{E:generalNLS}}
\begin{proposition}\label{P:LWP}
There exists $\widetilde{\eps}_0>0$ such that if $\eps:= \norm{u_{0,1}}_{H^{0,1}} + \norm{u_{0,2}}_{H^{0,1}}$ satisfies $\eps<\widetilde{\eps}_0$ then there exists a unique solution $(u_1,u_2) \in C([-1,1]; L^2 \times L^2)$ to \eqref{E:generalNLS} such that
$$(U(-t)u_1,U(-t)u_2) \in C([-1,1]; H^{0,1} \times H^{0,1})$$
and
\[
	\max_{t\in [-1,1]} (\norm{u_{1}(t)}_{L^2} + \norm{J(t)u_{1}(t)}_{L^2}+ \norm{u_{2}(t)}_{L^2}+\norm{J(t)u_{2}(t)}_{L^2}) \le 2\eps.
\]
\end{proposition}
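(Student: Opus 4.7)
The plan is a standard contraction mapping argument for the Duhamel map
\[
	\Phi_j(u_1,u_2)(t) := U(t) u_{0,j} - i \int_0^t U(t-s) F_j(u_1,u_2)(s)\,ds, \qquad j=1,2,
\]
on the closed ball of radius $2\eps$ in the Banach space
\[
	X := \bigl\{(u_1,u_2) : u_j,\ J(\cdot) u_j \in L^\infty([-1,1];L^2),\ u_j \in L^4([-1,1];L^\infty_x)\bigr\},
\]
normed by the sum of the three norms. The auxiliary Strichartz norm $L^4_t L^\infty_x$ is included because the pair $(4,\infty)$ is Schr\"odinger-admissible in one dimension, and time integration through this norm is what allows the argument to survive the degeneration at $t=0$ of pointwise bounds obtained via $J$.

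Two structural identities drive the nonlinear estimates. First, the commutation $J(t)U(t-s)=U(t-s)J(s)$ yields the Duhamel formula
\[
	J(t)\Phi_j(u_1,u_2)(t) = U(t)(x u_{0,j}) - i\int_0^t U(t-s)\, J(s) F_j(u_1,u_2)(s)\,ds.
\]
Second, each of the twelve cubic monomials in $F_j$ has net gauge weight $+1$ (two unconjugated factors and one conjugated), so the real-coefficient nonlinearity obeys $M(-t) F_j(u_1,u_2) = F_j(M(-t) u_1, M(-t) u_2)$. Setting $v_k := M(-t) u_k$, this gives
\[
	J(t) F_j(u_1,u_2) = 2it\, M(t)\, \partial_x F_j(v_1, v_2),
\]
and, combined with $|v_k| = |u_k|$ and $\|\partial_x v_k\|_{L^2} = (2|t|)^{-1}\|J(t)u_k\|_{L^2}$, a Leibniz expansion produces (with $u$ standing loosely for $u_1$ or $u_2$)
\[
	\|F_j(u_1,u_2)\|_{L^2_x} + \|J(s)F_j(u_1,u_2)\|_{L^1_x} \lesssim \|u\|_{L^\infty_x}^2 \|u\|_{L^2_x} + \|u\|_{L^\infty_x}\, \|u\|_{L^2_x}\, \|J(s)u\|_{L^2_x},
\]
in which all factors of $|s|$ cancel exactly.

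Combining these with the homogeneous Strichartz bound at $(q,r)=(4,\infty)$ and the inhomogeneous Strichartz estimate with the dual pair $(4/3,1)$, together with H\"older in time on the bounded interval $[-1,1]$, yields
\[
	\|\Phi(u_1,u_2)\|_X \lesssim \eps + \|(u_1,u_2)\|_X^3
\]
and an entirely analogous Lipschitz estimate for $\Phi(u_1,u_2) - \Phi(u_1',u_2')$. Taking $\widetilde{\eps}_0$ small enough, $\Phi$ becomes a self-map and a strict contraction on the ball of radius $2\eps$, producing the unique fixed point with the asserted bound on $\|u_j\|_{L^2} + \|J(t)u_j\|_{L^2}$. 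The main technical point is precisely that the pointwise Gagliardo--Nirenberg-type bound $\|u\|_{L^\infty_x} \lesssim |t|^{-1/2}\|u\|_{L^2}^{1/2}\|J(t)u\|_{L^2}^{1/2}$ degenerates at $t=0$; the $L^4_t L^\infty_x$ norm in $X$ is included specifically to absorb this degeneration after duality pairing with $L^{4/3}_t L^1_x$.
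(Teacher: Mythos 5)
Your proof is correct and follows the same standard contraction-mapping route that the paper indicates: it simply cites \cite{CazBook} and records the gauge-invariance identity showing that $J(t)$ acts as a derivative on $F_j$, which is precisely the first of your two structural identities (and the second is equivalent to it). The detail you make explicit—working in a space augmented by the admissible $L^4_t L^\infty_x$ Strichartz norm, with dual pair $L^{4/3}_t L^1_x$ for the nonlinearity, so that the $|t|^{-1/2}$ degeneration of the pointwise $\|u\|_{L^\infty_x}\lesssim |t|^{-1/2}\|u\|_{L^2}^{1/2}\|J(t)u\|_{L^2}^{1/2}$ bound at $t=0$ is absorbed—is exactly what makes the standard argument close, and is consistent with the reference the paper invokes.
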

The proof is standard (see \cite{CazBook}, for instance). 
We merely note that, since the nonlinearities have the gauge invariant property, the operator $J(t)$ acts like a derivative on them. More precisely, for $t\neq 0$,
\begin{align*}
	J(t) (v_1\overline{v_2}v_3)
	={}& M(t) 2it \partial_x (M(-t)
	v_1)(\overline{M(-t)v_2})(M(-t)v_3)) \\
	={}& (J(t) v_1)\overline{v_2}v_3 + v_1(\overline{J(t)v_2})v_3 + v_1\overline{v_2}(J(t)v_3).
\end{align*}

\subsection{Introduction of new variables}

Introduce a new variable
\begin{equation}\label{E:wdef}
w_j(t):=\mathcal{F} U(-t) u_j(t).
\end{equation}
Note that $U(-t) u_j(t) \in H^{0,1}$ if and only if $w_j(t) \in H^1$.
One sees that
\begin{equation}\label{E:wsys}
	i \partial_t w_j(t) 
	= 
	\tfrac1{2t} F_j(w_1,w_2) + r_j
\end{equation}
for $j=1,2$ and $t\neq0$,
where 
\begin{equation}\label{E:rdef}
r_j:=\mathrm{I}_j + \mathrm{II}_j
\end{equation}
 with
\[
	\mathrm{I}_j = (U(\tfrac1{4t})-1) D(t)^{-1} M(t)^{-1} F_j(u_1(t),u_2(t))
\]
and
\[
	\mathrm{II}_j
= \tfrac1{2t} (
F_j(U(-\tfrac1{4t}) w_1(t),U(-\tfrac1{4t}) w_2(t)) - F_j(w_1(t), w_2(t))).
\]
The following relations will be useful:
\begin{equation}\label{E:uwrelation1}
	\norm{w_j(t)}_{L^2} = \norm{u_j(t)}_{L^2},
\quad
	\norm{\partial_x w_j (t)}_{L^2} = \norm{J(t)u_j(t)}_{L^2},
\end{equation}
and
\begin{equation}\label{E:uwrelation2}
	\norm{U(-\tfrac1{4t}) w_j(t)}_{L^\infty}
		= t^{\frac12} \norm{u_j(t)}_{L^\infty}.
\end{equation}
The last identity is valid for $t\neq0$.

\subsection{Bootstrap argument}
We shall obtain a global bound
 of the small solutions.
For $\delta>0$ and $T>0$, we define
\[
	X_T:= \sup_{t\in [0,T]} (1+t)^{-\delta\eps^2}
	\sum_{j=1,2}(\norm{u_j}_{L^2} 
	+\norm{J(t) u_j(t)}_{L^2})
\]
and
\[
	Y_T:= \sup_{t\in (0,T]} t^{\frac12} \sum_{j=1,2}\norm{u_j (t)}_{L^\infty}.
\]
By the local theory (Proposition \ref{P:LWP}) and the inequality
\[
	\|u_j(t)\|_{L^\infty} = \|M(t) u_j(t)\|_{L^\infty} \lesssim 
	\|M(t) u_j(t)\|_{L^2}^\frac12 \| \partial_x M(t) u_j(t)\|_{L^2}^\frac12
		\lesssim t^{-\frac12}  (\norm{u_j}_{L^2} 
	+\norm{J(t) u_j(t)}_{L^2})
\]
for $t>0$,
for any choice of $\delta>0$, one has
\[
	X_1 + Y_1 \lesssim \eps
\]
for small $\eps>0$.
The goal of this section is to establish the following:
\begin{proposition}\label{P:gb}
There exist $\delta>0$, $C>0$, and $\eps_0\in (0,\widetilde{\eps}_0]$ such that if $\eps \in (0,\eps_0)$ then
the unique solution $(u_1(t),u_2(t))$ given in Proposition \ref{P:LWP} exists globally in time for positive time direction and obeys the bound
\begin{equation}\label{E:XYglobal}
	\sup_{T\ge1} (X_T + Y_T) \le C \eps,
\end{equation}
where $\eps$ is as in Theorem \ref{T:main}. 
\end{proposition}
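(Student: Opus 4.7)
The plan is to run a continuity argument on the profile variable $w_j = \mathcal{F}U(-t) u_j$, closing a bootstrap for $X_T + Y_T$. By Proposition \ref{P:LWP} and the standard continuation argument, it will suffice to prove an a priori implication of the form $X_T + Y_T \le 2C_0 \eps \Rightarrow X_T + Y_T \le C_0 \eps$ on the maximal interval of existence, for a suitable absolute constant $C_0$ and $\eps$ sufficiently small. The argument splits into a pointwise (in $\xi$) control of $|w_j|$ via the quartic conserved quantity of the ODE system, and a Gronwall-type $L^2$-energy estimate for $\partial_x w_j$.

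For the pointwise bound, I would differentiate $\mathcal{Q}(w_1(t,\xi), w_2(t,\xi))$ in $t$ using \eqref{E:wsys}. The contribution coming from $\tfrac{1}{2t} F_j(w_1,w_2)$ vanishes identically because $\mathcal{Q}$ is a conserved quantity of the ODE system \eqref{E:Asys}; this is exactly the computation preceding \eqref{E:Q4def}, which rests on Proposition \ref{P:quadODE} and the choice of $\widetilde{\Gamma}$ through \eqref{E:tgamma}. What remains is pointwise bounded by $(|w_1|+|w_2|)^3(|r_1|+|r_2|)$. Taylor-expanding $U(\tfrac{1}{4t}) - 1 \sim \tfrac{i}{4t}\partial_x^2$ in the expression for $\mathrm{I}_j$ and applying a cubic H\"older bound to $\mathrm{II}_j$ would give $\|r_j(t)\|_{L^\infty} \lesssim \eps^3 t^{-3/2 + C\delta\eps^2}$ under the bootstrap hypothesis. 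Since this decay is integrable, and the local theory yields $\sup_\xi \mathcal{Q}(w_j(1,\xi)) \lesssim \eps^4$, integration in $t$ gives $\sup_{\xi,t\ge 1}\mathcal{Q}(w_j(t,\xi)) \lesssim \eps^4$ uniformly. Choosing $\Gamma \in W(-k^2,\mathscr{A}^2)\cap \mathcal{P}_+$ as in Assumption \ref{A:main} then provides the pointwise coercivity $|w_1|^2 + |w_2|^2 \lesssim \mathcal{Q}(w_1,w_2)^{1/2}$, which combined with \eqref{E:uwrelation2} and a routine estimate for $(U(-\tfrac{1}{4t})-1)w_j$ upgrades to $t^{1/2}\|u_j(t)\|_{L^\infty} \le C_0\eps/2$.

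For the derivative bound I would apply $\partial_x$ to \eqref{E:wsys} and run the standard $L^2$ energy estimate. The gauge-invariance identity noted in the proof of Proposition \ref{P:LWP} reduces the nonlinear contribution to $\tfrac{1}{t}\|w\|_{L^\infty}^2 \|\partial_x w_j\|_{L^2}^2$; combined with the Step-2 control of $\|w\|_{L^\infty}$ and an analogous $L^2$-bound $\|\partial_x r_j(t)\|_{L^2} \lesssim \eps^3 t^{-1-\delta'}$ for some $\delta' > 0$, Gronwall yields $\|w_j(t)\|_{L^2} + \|\partial_x w_j(t)\|_{L^2} \lesssim \eps(1+t)^{C_1 \eps^2}$ for an absolute $C_1$. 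Choosing $\delta > C_1$ in the definition of $X_T$ and unwinding via \eqref{E:uwrelation1} then gives $X_T \le C_0 \eps/2$, closing the bootstrap. The main obstacle is the quantitative remainder analysis: one must rigorously show that $\|r_j\|_{L^\infty}$ and $\|\partial_x r_j\|_{L^2}$ inherit the claimed $t^{-3/2}$ and $t^{-1-\delta'}$ decay from only the $H^{0,1}$-information provided by the bootstrap. In particular the $L^\infty$-remainder estimate requires careful handling of the $\partial_x^2$ produced by the expansion of $U(1/(4t))-1$ after the $M(t)D(t)$-factorization. Once these remainder estimates are in hand, closure of the bootstrap is standard.
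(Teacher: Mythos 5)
Your bootstrap structure and your treatment of $Y_T$ essentially reproduce the paper's argument: you differentiate the quartic quantity $\mathcal{Q}(w_1,w_2)$, observe that the $\tfrac{1}{2t}F_j$ contribution cancels exactly by the ODE conservation law built from $\Gamma,\widetilde\Gamma$, bound the remainder by $\mathcal{Q}^{3/4}(|r_1|+|r_2|)$, integrate $\partial_t\mathcal{Q}^{1/4}$, and invoke the coercivity $\mathcal{Q}^{1/2}\sim |w_1|^2+|w_2|^2$ from $\Gamma\in\mathcal{P}_+$. One numerical point: the $L^\infty$ decay of $r_j$ is $t^{-5/4+O(\delta\eps^2)}$, not $t^{-3/2}$; the latter is the $L^2$ rate. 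Since both are integrable this does not affect the argument's viability.

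Your route for the $X_T$ bound, however, has a genuine gap. You propose to apply $\partial_x$ to the profile equation \eqref{E:wsys} and run an $L^2$ energy estimate, which forces you to control $\|\partial_x r_j(t)\|_{L^2}$ at an integrable rate. This is not available with only $H^{0,1}$ data. Indeed, $\partial_\xi \mathrm{I}_j = (U(\tfrac{1}{4t})-1)\,\partial_\xi\bigl(D(t)^{-1}M(t)^{-1}F_j(u)\bigr)$, and gaining the factor $t^{-1/2}$ from $U(\tfrac{1}{4t})-1$ requires one more $\partial_\xi$, i.e.\ something like $J(t)^2 F_j(u)$ in $L^2$, which needs $J(t)^2 u_j$ and hence $x^2 u_{0,j}\in L^2$ — not assumed. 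Using only $\|(U(\tfrac{1}{4t})-1)h\|_{L^2}\lesssim\|h\|_{L^2}$ gives $\|\partial_x \mathrm{I}_j\|_{L^2}\lesssim t^{-1}\eps^3(1+t)^{\delta\eps^2}$, which is only borderline non-integrable and does not close Gronwall. The paper avoids this entirely: it bounds $\|J(t)u_j\|_{L^2}$ by Duhamel on the $u$-equation directly, using that $J(t)$ commutes with $i\partial_t+\partial_x^2$ and acts as a derivation on the gauge-invariant cubic nonlinearity, so no $r_j$ terms ever appear in the $X_T$ estimate. You should replace your $\partial_x w$-energy step with this Duhamel-on-$J(t)u_j$ estimate; you do flag the remainder analysis as the main obstacle, but for this particular term the obstruction is structural rather than technical, and the fix is to never pass to the $w$-variable for the weighted $L^2$ bound.
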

\begin{remark}
We only consider the bounds for positive time, however similar bounds hold for negative time and hence the solution is global also for negative time direction.
\end{remark}

\begin{proof}
It suffices to establish the existence of $\delta$, $C_0>0$, $\widetilde{C}_0>0$, and $\eps_0>0$ such that
if $\eps \in (0,\eps_0)$ and if
\begin{equation}\label{P:gb_pf1}
	X_T  \le C_0 \eps, \quad
	Y_T \le \widetilde{C}_0 \eps
\end{equation}
for some $T\ge1$ then one has
\begin{equation}\label{P:gb_pf2}
	X_T \le \tfrac12 C_0 \eps, \quad
	Y_T \le \tfrac12 \widetilde{C}_0 \eps.
\end{equation}

Let $\delta$, $C>0$, and $\eps_0$ to be chosen later. Suppose that \eqref{P:gb_pf1}  is true for some $T\ge1$.
Let us evaluate $X_T+Y_T$.
The estimate for $X_T$ is standard.
Since the equation is cubic, one sees from the Duhamel formula that
\begin{align*}
	\sum_{j=1,2} \norm{u_j(t)}_{L^2}
	\le{}& C_1\eps + C_1\int_0^t \(\sum_{j=1,2} \norm{u_j(s)}_{L^\infty}\)^2
	\sum_{j=1,2} \norm{u_j(s)}_{L^2}ds\\
	\le{}& C_1\eps + C_1 Y_T^2 X_T
	\int_0^t (1+s)^{\delta \eps^2 -1} ds\\
	\le{}& C_1\eps + \tfrac{C_1 \widetilde{C}_0^2}{\delta} C_0\eps (1+t)^{\delta \eps^2}.
\end{align*}
We have similar estimate for $Ju_j$:
\[
	\sum_{j=1,2} \norm{J(t) u_j(t)}_{L^2}
\le C_2\eps + \tfrac{C_2 \widetilde{C}_0^2 }{\delta} C_0\eps (1+t)^{\delta \eps^2} .
\]
Hence,
\[
	X_T \le (C_1+C_2)(1 + \delta^{-1} \widetilde{C}_0^2 C_0)\eps.
\]
Now, if one has
\begin{equation}\label{P:gb_pf3}
	C_1+ C_2 \le \tfrac14 C_0
\end{equation}
and 
\begin{equation}\label{P:gb_pf4}
	4(C_1+C_2)\widetilde{C}_0^2 \le  \delta,
\end{equation}
then one obtains
\[
	X_T \le \tfrac14 C_0 \eps + \tfrac14 C_0 \eps  = \tfrac12 C_0 \eps,
\]
which is the first half of \eqref{P:gb_pf2}.

Let us turn to the $L^\infty$-decay estimate. For $t\in[0,1]$, we use
\[
	t^{\frac12}\sum_{j=1,2}\norm{u_j}_{L^\infty} 
\le Y_1.
\]
Hence, we consider $t\in [1,T]$.
It follows from Gagliardo-Nirenberg inequality that
\begin{align*}
	\norm{(U(-\tfrac1{4t})-1) w_j(t)}_{L^\infty} &\lesssim
		\norm{(U(-\tfrac1{4t})-1) w_j(t)}_{L^2}^{\frac12}
		\norm{\partial_x(U(-\tfrac1{4t})-1) w_j(t)}_{L^2}^{\frac12}\\
	&\lesssim t^{-\frac14} \norm{\partial_x w_j(t)}_{L^2}.
\end{align*}
Hence, we see from \eqref{E:uwrelation2} 
and \eqref{E:uwrelation1}
that
\begin{align*}
	t^{\frac12}\norm{u_j}_{L^\infty}=
	\norm{U(-\tfrac1{4t}) w_j(t)}_{L^\infty}
	\le{}& \norm{w_j(t)}_{L^\infty}
	+Ct^{-\frac14} \norm{\partial_x w_j(t)}_{L^2} \\
	\le{}& \norm{w_j(t)}_{L^\infty}
	+C(1+t)^{-\frac14+\delta \eps^2} X_T
\end{align*}
for $t\in[1,T]$.

Hence, we shall estimate 
$\norm{w_j(t)}_{L^\infty}$ for 
$t\in[1,T]$.
To this end, we introduce the quartic quantity.
Let $(\gamma_1,\gamma_2,\gamma_3) \in W(-k^2,A^2) \cap \mathcal{P}_+$
and $(\widetilde{\gamma}_1,\widetilde{\gamma}_2,\widetilde{\gamma}_3)$
be as in \eqref{E:tgamma}.
We define two quadratic forms
\[
	Q_1:= \gamma_1 |w_1|^2 + 2\gamma_2  \Re(\overline{w_1} w_2) + \gamma_3 |w_2|^2 = \begin{bmatrix}
	\overline{w_1} & \overline{w_2}
	\end{bmatrix}
	\begin{bmatrix}
	{\gamma}_1 & {\gamma}_2 \\ {\gamma}_2 & {\gamma}_3
	\end{bmatrix}
	\begin{bmatrix}
	w_1 \\ w_2
	\end{bmatrix}
\]
and
\[
	Q_2:= (\overline{\gamma}_1 |w_1|^2 + 2\overline{\gamma}_2  \Re(\overline{w_1} w_2) + \overline{\gamma}_3 |w_2|^2)
= 	\begin{bmatrix}
	\overline{w_1} & \overline{w_2}
	\end{bmatrix}
	\begin{bmatrix}
	\widetilde{\gamma}_1 & \widetilde{\gamma}_2 \\ \widetilde{\gamma}_2 & \widetilde{\gamma}_3
	\end{bmatrix}
	\begin{bmatrix}
	w_1 \\ w_2
	\end{bmatrix}.
\]
Further, we introduce the quartic quantity
\[
	\mathcal{Q}:= Q_1^2 + Q_2^2.
\]
Then, by means of \eqref{E:wsys},
\begin{align*}
	\partial_t Q_1
	={}& 2 \Im 
	\begin{bmatrix}
	\overline{w_1} & \overline{w_2}
	\end{bmatrix}
	\begin{bmatrix}
	\gamma_1 & \gamma_2 \\ \gamma_2 & \gamma_3
	\end{bmatrix}
	\begin{bmatrix}
	i \partial_t w_1 \\ i\partial_t w_2
	\end{bmatrix}\\
	={}& 
	2 \Im 
	\begin{bmatrix}
	\overline{w_1} & \overline{w_2}
	\end{bmatrix}
	\begin{bmatrix}
	\gamma_1 & \gamma_2 \\ \gamma_2 & \gamma_3
	\end{bmatrix}
	\begin{bmatrix}
	F_1(w_1,w_2) \\ F_2(w_1,w_2)
	\end{bmatrix}
	+R_1\\
	={}&2t^{-1}k \Im (\overline{w_1} w_2) Q_2
	+R_1,
\end{align*}
where
\[
	R_1 = 2 \Im 
	\begin{bmatrix}
	\overline{w_1} & \overline{w_2}
	\end{bmatrix}
	\begin{bmatrix}
	\gamma_1 & \gamma_2 \\ \gamma_2 & \gamma_3
	\end{bmatrix}
	\begin{bmatrix}
	r_1 \\
	r_2\\
	\end{bmatrix}
\]
and $(r_1,r_2)$ is defined in \eqref{E:rdef}.
Similarly,
\begin{align*}
	\partial_t Q_2
	={}&-2t^{-1}k \Im (\overline{w_1} w_2)
	Q_1+R_2,
\end{align*}
where
\[
	R_2 = 2 \Im 
	\begin{bmatrix}
	\overline{w_1} & \overline{w_2}
	\end{bmatrix}
	\begin{bmatrix}
	\widetilde{\gamma}_1 & \widetilde{\gamma}_2 \\ \widetilde{\gamma}_2 & \widetilde{\gamma}_3
	\end{bmatrix}
	\begin{bmatrix}
	r_1 \\
	r_2 \\
	\end{bmatrix}.
\]
One has
\[
	\partial_t \mathcal{Q} = 2R_1Q_1 + 2R_2Q_2 
	\lesssim (|w_1|+|w_2|)(|r_1|+|r_2|)
	\mathcal{Q}^\frac12 
	 \lesssim \mathcal{Q}^\frac34 
	 (|r_1|+|r_2|),
\]
which reads as
\[
	\partial_t \mathcal{Q}^\frac14	
	\lesssim |r_1|+|r_2|. 
\]
Hence, for $t\in [1,T]$,
\begin{equation}\label{E:Qbound}
	\mathcal{Q}(t)^\frac14 \le (Q_1(1)^2 + Q_2(1)^2)^\frac14
+C\int_1^t 
 	(|r_1|+|r_2|)(s)
 	 ds.
\end{equation}

Let us now estimate $r_1$ and $r_2$ for $t\in [1,T]$.
One has
\begin{equation}\label{E:I_est}
\begin{aligned}
	\norm{\mathrm{I}_j}_{L^\infty}
	\lesssim{}& 	\norm{\mathrm{I}_j}_{L^2}^\frac12
	\norm{\partial_x \mathrm{I}_j}_{L^2}^\frac12 \\
	\lesssim{}& t^{-\frac14} \norm{\partial_x D(t)^{-1} M(t)^{-1} F_j(u_1(t),u_2(t))}_{L^2} \\
	={}& t^{-\frac14} \norm{J(t) F_j(u_1(t),u_2(t))}_{L^2} \\
	\lesssim {}& t^{-\frac54} \(\sum_{j=1,2} t^{\frac12} \norm{u_j(t)}_{L^\infty} \)^2 \sum_{j=1,2}  \norm{J(t) u_j(t)}_{L^2}
\\
	\lesssim{}& t^{-\frac54+\delta \eps^2} Y_T^2 X_T
\end{aligned}
\end{equation}
and
\begin{equation}\label{E:II_est}
\begin{aligned}
	\norm{\mathrm{II}_j}_{L^\infty}
	\lesssim{}& 	t^{-1} \(\sum_{j=1,2} \norm{(U(-\tfrac1{4t})-1) w_j}_{L^\infty}\)
\(\sum_{j=1,2} \norm{U(-\tfrac1{4t}) w_j}_{L^\infty}^2 + \sum_{j=1,2} \norm{w_j}_{L^\infty}^2\)
 \\
	\lesssim{}& t^{-\frac54} \(\sum_{j=1,2} \norm{\partial_x w_j}_{L^2}\) 
\(\sum_{j=1,2} (t^{\frac12} \norm{u_j}_{L^\infty})^2 + \sum_{j=1,2} \norm{w_j}_{L^2} \norm{\partial_x w_j}_{L^2}\)
\\
	\lesssim{}& t^{-\frac54+3\delta \eps^2} (Y_T^2 X_T + X_T^3).
\end{aligned}
\end{equation}

Hence, combining \eqref{E:Qbound}, \eqref{E:I_est} and \eqref{E:II_est}, one has
\[
	\norm{\mathcal{Q}(t)^\frac14}_{L^\infty}
	\le C \eps + C \int_1^t 
	s^{-\frac54 + 3 \delta \eps^2}ds(Y_T^2 X_T + X_T^3)
 \le C\eps + C(Y_T^2 X_T + X_T^3)
\]
if $3 \delta \eps^2 \le \frac18$ for $t\in[1,T]$.
Using the equivalence relation
$\mathcal{Q}^\frac12 \sim |w_1|^2 + |w_2|^2$, one obtains $|w_1| + |w_2| \sim \mathcal{Q}^\frac14$ and hence
\[
	\sum_{j=1,2}\norm{w_j(t)}_{L^\infty} \le C \eps + C(Y_T+ X_T)  X_T^\frac12,
\]
which is the desired $L^\infty$ estimate for $\norm{w_j(t)}_{L^\infty}$.

Thus, combining above inequalities, we obtain
\[
	Y_T \le C_3 \eps + C_3(Y_T+ X_T)  X_T^\frac12 + C_3 X_T
\le C_3(1+ C_0 + (C_0+\widetilde{C}_0)C_0^{\frac12} \eps_0^{\frac12} )\eps.
\]
If we have
\begin{equation}\label{P:gb_pf5}
	4C_3(1+ C_0) \le \widetilde{C}_0
\end{equation}
and 
\begin{equation}\label{P:gb_pf6}
	C_3	(C_0+\widetilde{C}_0)C_0^{\frac12} \eps_0^{\frac12} 
\le \tfrac14 \widetilde{C}_0,
\end{equation}
then we obtain
\[
	Y_T \le \tfrac14 \widetilde{C}_0 \eps+ \tfrac14 \widetilde{C}_0 \eps =\tfrac12 \widetilde{C}_0 \eps,
\]
which is the latter half of \eqref{P:gb_pf2}.

Finally, we confirm that one can choose the parameters
$C_0$, $\widetilde{C}_0$, $\delta$, and $\eps_0$ so that 
all the argument works.
We first choose $C_0$ so that
\eqref{P:gb_pf3} holds.
We next choose $\widetilde{C}_0$ so that \eqref{P:gb_pf5} is valid.
We then choose $\delta$ and $\eps_0$ so that \eqref{P:gb_pf4} and \eqref{P:gb_pf6} hold true, respectively.
By choosing $\eps_0$ even smaller if necessary, we also assume that
$\delta \eps_0^2 \le \frac1{24}$.
Thus, we complete the proof.
\end{proof}

We conclude this section with the global $L^2\times L^2$-bound. We remark that \eqref{E:XYglobal} gives us merely
\[
	\| u_1(t)\|_{L^2}+\| u_2(t)\|_{L^2} \le C \eps (1+|t|)^{\delta \eps^2}.
\]
As mentioned in the introduction, as for systems treated in the previous studies, an effective
conservation law provides global $L^2\times L^2$-bound.
However, our model do not posses this kind of conserved quantity.
Nevertheless, we have the global bound for small solutions
by employing the conservation of $\mathcal{Q}$ for the corresponding ODE system, as in the proof $L^\infty \times L^\infty$-estimate in \eqref{E:XYglobal}.

\begin{proposition}\label{P:gb2}
Let  $\eps_0\in (0,\widetilde{\eps}_0]$ be the number given in Proposition \ref{P:gb}.
Suppose that $\eps\le \eps_0$.
There exists $C>0$ such that
the global solution $(u_1(t),u_2(t))$ given in Proposition \ref{P:gb} obeys the bound
\begin{equation}\label{E:L2global}
	\sup_{t\in \R} (\| u_1(t)\|_{L^2}+\| u_2(t)\|_{L^2})
	\le C \eps.
\end{equation}
\end{proposition}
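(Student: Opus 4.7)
The plan is to adapt the bootstrap argument behind the $L^\infty$ estimate in Proposition~\ref{P:gb} by working with the spatial integral $G(t):= \int_\R \mathcal{Q}(t,x)^{1/2} dx$ in place of the pointwise quantity. The key observation is the equivalence
\[
    G(t) \sim \|w_1(t)\|_{L^2}^2 + \|w_2(t)\|_{L^2}^2 = \|u_1(t)\|_{L^2}^2 + \|u_2(t)\|_{L^2}^2,
\]
which follows from $\Gamma\in\mathcal{P}_+$ (so that $Q_1$ is a positive-definite quadratic form in $(w_1,w_2)$) together with the trivial upper bound $\mathcal{Q}^{1/2}\le |Q_1|+|Q_2|\lesssim |w_1|^2+|w_2|^2$, combined with \eqref{E:uwrelation1}.

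Next, I would differentiate $G$ using the same computation as in the proof of Proposition~\ref{P:gb}. The crucial point is that the $\frac{2k}{t}\Im(\overline{w_1}w_2)$ terms in $\partial_t Q_1$ and $\partial_t Q_2$ cancel in $Q_1\partial_t Q_1 + Q_2\partial_t Q_2$ because $\mathscr{A}\Gamma = k\widetilde{\Gamma}$ sends $\Gamma^\perp$--space into itself with a sign flip; this yields $\partial_t\mathcal{Q}=2(Q_1R_1+Q_2R_2)$ and hence
\[
    |\partial_t \mathcal{Q}^{1/2}|\le |R_1|+|R_2|\lesssim (|w_1|+|w_2|)(|r_1|+|r_2|).
\]
Integrating in $x$ and applying Cauchy--Schwarz gives $|\partial_t G|\lesssim \|w\|_{L^2}\|r\|_{L^2}\lesssim G^{1/2}(\|r_1\|_{L^2}+\|r_2\|_{L^2})$, whence
\[
    \bigl|\partial_t G^{1/2}\bigr|\lesssim \|r_1\|_{L^2}+\|r_2\|_{L^2}.
\]

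The main obstacle is to establish time-integrable $L^2$-bounds on $r_j = \mathrm{I}_j+\mathrm{II}_j$. For $\mathrm{I}_j$, the smoothing estimate $\|(U(s)-1)f\|_{L^2}\lesssim |s|^{1/2}\|\partial_x f\|_{L^2}$, together with the identity $\partial_x D(t)^{-1}M(t)^{-1}F_j = t^{-1}D(t)^{-1}M(t)^{-1}J(t)F_j$, yields $\|\mathrm{I}_j\|_{L^2}\lesssim t^{-1/2}\|J(t)F_j(u)\|_{L^2}\lesssim t^{-3/2+\delta\eps^2}Y_T^2 X_T$, which is integrable. For $\mathrm{II}_j$ the naive split putting the difference in $L^2$ only gives $t^{-1/2}$ decay; instead one should place the difference factor in $L^\infty$ via the Gagliardo--Nirenberg bound $\|(U(-\tfrac1{4t})-1)w\|_{L^\infty}\lesssim t^{-1/4}\|\partial_x w\|_{L^2}$ used already in \eqref{E:I_est}, leaving the cubic product in the form $L^\infty\cdot L^\infty\cdot L^2$. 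This produces
\[
    \|\mathrm{II}_j\|_{L^2}\lesssim t^{-5/4+3\delta\eps^2}(X_T^2 Y_T + X_T^3),
\]
again integrable provided $\delta\eps_0^2$ is chosen small enough (say $\le 1/24$, as in Proposition~\ref{P:gb}).

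Combining these bounds with \eqref{E:XYglobal} gives $\int_1^\infty (\|r_1\|_{L^2}+\|r_2\|_{L^2})dt \lesssim \eps^3$, so integrating the $G^{1/2}$ inequality from $t=1$ to $t=T$ yields $G(T)^{1/2}\le G(1)^{1/2} + C\eps^3\lesssim \eps$, where the initial value is controlled by the local theory (Proposition~\ref{P:LWP}). Translating back via the equivalence between $G$ and $\|u_1\|_{L^2}^2+\|u_2\|_{L^2}^2$, and handling $t\in[0,1]$ directly by Proposition~\ref{P:LWP}, yields \eqref{E:L2global} for positive times; the negative-time case is symmetric.
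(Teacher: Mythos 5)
Your proposal follows essentially the same route as the paper. Both bound the same quantity $G(T)^{1/2}=\bigl(\int_{\R}\mathcal Q(T,\xi)^{1/2}\,d\xi\bigr)^{1/2}=\|\mathcal Q(T)^{1/4}\|_{L^2}$ by $\|\mathcal Q(1)^{1/4}\|_{L^2}$ plus the time integral of $\|r_1\|_{L^2}+\|r_2\|_{L^2}$: the paper reads off the pointwise inequality $\partial_t\mathcal Q^{1/4}\lesssim |r_1|+|r_2|$ (already established in the $L^\infty$ bootstrap) and applies Minkowski's integral inequality, while you integrate $\mathcal Q^{1/2}$ in $\xi$ first, differentiate in $t$, and use Cauchy--Schwarz; the two derivations are equivalent and lead to the same estimate, after which both use the same integrable error bounds and the coercivity from $\Gamma\in\mathcal P_+$.

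One small but worth-noting inaccuracy in your aside on $\|\mathrm{II}_j\|_{L^2}$: the ``naive split'' that places $(U(-\tfrac1{4t})-1)w_j$ in $L^2$ does \emph{not} give only $t^{-1/2}$ decay. Since $\mathrm{II}_j$ carries a prefactor $t^{-1}$ and $\|(U(-\tfrac1{4t})-1)w_j\|_{L^2}\lesssim t^{-1/2}\|\partial_x w_j\|_{L^2}$, this split yields the rate $t^{-3/2+3\delta\eps^2}$, which is exactly the estimate the paper uses; your alternative split in which the difference is measured in $L^\infty$ gives only the weaker rate $t^{-5/4+3\delta\eps^2}$. Both are integrable for $\delta\eps_0^2$ small, so your argument still closes, but the motivation you give for the alternative split is not needed.
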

\begin{proof}
We only consider the bound for $[1,\infty)$.
Mimicking the proof of \eqref{E:Qbound}, one sees that
\[
	\norm{\mathcal{Q}(t)^\frac14}_{L^2}
	\le \norm{(Q_1(1)^2 + Q_2(1)^2)^\frac14}_{L^2}
+C\int_1^t \sum_{j=1}^2(\norm{\mathrm{I}_j}_{L^2} + \norm{\mathrm{II}_j}_{L^2} ) ds
\]
for $t\in [1,\infty)$,
where $\mathrm{I}_j$ and $\mathrm{II}_j$ are given in \eqref{E:rdef}.
By the global bound \eqref{E:XYglobal}, one has
\begin{align*}
	\norm{\mathrm{I}_j}_{L^2}
	& \lesssim t^{-\frac12} \| \partial_x D(t)^{-1} M(t)^{-1} F_j(u_1(t),u_2(t))\|_{L^2} \\
& = t^{-\frac12} \| J(t) F_j(u_1(t),u_2(t))\|_{L^2} \\
&\lesssim t^{-\frac12} (\|u_1(t)\|_{L^\infty}+\|u_2(t)\|_{L^\infty})^2(\|J(t)u_1(t)\|_{L^2}+\|J(t)u_2(t)\|_{L^2})\\
&\lesssim \eps^3 t^{-\frac32 + \delta \eps^2}
\end{align*}
and
\begin{align*}
	\norm{\mathrm{II}_j}_{L^2}
	\lesssim{}& 	t^{-1} \(\sum_{j=1,2} \norm{(U(-\tfrac1{4t})-1) w_j}_{L^2}\)
\(\sum_{j=1,2} \norm{U(-\tfrac1{4t}) w_j}_{L^\infty}^2 + \sum_{j=1,2} \norm{w_j}_{L^\infty}^2\)
 \\
	\lesssim{}& t^{-\frac32} \(\sum_{j=1,2} \norm{\partial_x w_j}_{L^2}\) 
\(\sum_{j=1,2} (t^{\frac12} \norm{u_j}_{L^\infty})^2 + \sum_{j=1,2} \norm{w_j}_{L^2} \norm{\partial_x w_j}_{L^2}\)
\\
	\lesssim{}& \eps^3 t^{-\frac32+3\delta \eps^2} .
\end{align*}
Hence, one obtain
\[
		\sup_{t\in [1,\infty)} (\| u_1(t)\|_{L^2}+\| u_2(t)\|_{L^2})
	\lesssim \| \mathcal{Q}^{1/4}(1)\|_{L^2} + \int_0^\infty s^{-\frac32 + 3\delta \eps^2} \eps^3 ds \lesssim \eps
\]
as desired.
\end{proof}

\section{Asymptotic behavior}\label{S:ab}
In this section, we establish the asymptotics.
To begin with, we show that, thanks to the global bound, one sees that
the asymptotic profile of $u_j$ is described by
$w_j$:
\begin{proposition}\label{P:uw}
Let $\delta$ and $\eps_0$ be as in Proposition \ref{P:gb}.
It holds that
\[
	\norm{u_j(t)- (2it)^{-\frac12} e^{i \frac{x^2}{4t}} w_j(t, \tfrac{x}{2t})
	}_{L^2}
	\lesssim \eps t^{-\frac12 + \delta \eps^2}
\]
and
\[
	\norm{u_j(t)- (2it)^{-\frac12} e^{i \frac{x^2}{4t}} w_j(t, \tfrac{x}{2t})
	}_{L^\infty}
	\lesssim \eps t^{-\frac34 + \delta \eps^2}
\]
for $t>1$.
\end{proposition}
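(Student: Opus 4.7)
The plan is to exploit the factorization $U(t) = M(t) D(t) \mathcal{F} M(t)$ together with the defining relation $w_j(t) = \mathcal{F} U(-t) u_j(t)$. Since $\mathcal{F}^{-1} w_j(t) = U(-t) u_j(t)$, inserting the factorization into $u_j(t) = U(t) \mathcal{F}^{-1} w_j(t)$ and subtracting the candidate profile rewrites the discrepancy as
\[
u_j(t) - (2it)^{-\frac12} e^{i\frac{x^2}{4t}} w_j(t, \tfrac{x}{2t})
= M(t) D(t) [\mathcal{F} M(t) \mathcal{F}^{-1} - I] w_j(t)
= M(t) D(t) \mathcal{F} [M(t) - I] U(-t) u_j(t).
\]
The pointwise estimate feeding into both bounds is $|M(t)(x) - 1| = |e^{ix^2/(4t)} - 1| \lesssim t^{-\frac12} |x|$, obtained via $|e^{i\theta} - 1| \lesssim |\theta|^{\frac12}$.

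For the $L^2$ bound, since $M(t)$, $D(t)$, and $\mathcal{F}$ are $L^2$-isometries (up to a harmless Fourier normalization), taking $L^2$ norms of both sides of the displayed identity reduces the task to estimating $\|[M(t) - I] U(-t) u_j\|_{L^2}$. The pointwise bound yields $\lesssim t^{-\frac12} \|x U(-t) u_j\|_{L^2}$, and the identity $J(t) = U(t) x U(-t)$ converts this into $t^{-\frac12} \|J(t) u_j\|_{L^2}$. The global bound \eqref{E:XYglobal} from Proposition \ref{P:gb} then provides $\|J(t) u_j\|_{L^2} \lesssim \eps (1+t)^{\delta \eps^2}$, closing the $L^2$ estimate.

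For the $L^\infty$ bound, I would first use $\|D(t) \phi\|_{L^\infty} \lesssim t^{-\frac12} \|\phi\|_{L^\infty}$ to peel off a $t^{-\frac12}$ factor, reducing to an $L^\infty$ bound on $\phi := \mathcal{F} [M(t) - I] U(-t) u_j$. The Gagliardo--Nirenberg inequality in one dimension gives $\|\phi\|_{L^\infty} \lesssim \|\phi\|_{L^2}^{\frac12} \|\partial_x \phi\|_{L^2}^{\frac12}$; the $L^2$ factor is exactly the quantity estimated in the previous paragraph, while the derivative factor is handled via $\partial_\xi \mathcal{F} g = \mathcal{F}[-ixg]$, which yields $\|\partial_x \phi\|_{L^2} = \|x[M(t) - I] U(-t) u_j\|_{L^2} \lesssim \|J(t) u_j\|_{L^2}$. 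Interpolating produces $\|\phi\|_{L^\infty} \lesssim t^{-\frac14} \|J(t) u_j\|_{L^2}$, and combining this with the $t^{-\frac12}$ prefactor yields the claimed $L^\infty$ decay rate.

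I do not anticipate a serious obstacle: the entire argument is routine manipulation of the Schr\"odinger factorization. The only point requiring care is that the sharpened half-power bound $|M(t) - I| \lesssim t^{-\frac12}|x|$, rather than the naive $|M(t) - I| \lesssim t^{-1} x^2$, is what ensures that only a single power of $x$ appears, so that the global bound on $\|J(t) u_j\|_{L^2}$ from Proposition \ref{P:gb} suffices and no control on $\|x^2 U(-t) u_j\|_{L^2}$ is required.
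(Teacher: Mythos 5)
Your argument is correct and is essentially the same as the paper's: the paper isolates the error term as $M(t)D(t)\bigl(U(-\tfrac{1}{4t})-1\bigr)w_j(t)$ and estimates it via $\|(U(s)-1)f\|_{L^2}\lesssim |s|^{1/2}\|\partial_x f\|_{L^2}$ combined with Gagliardo--Nirenberg and the global bound on $\|J(t)u_j\|_{L^2}=\|\partial_x w_j\|_{L^2}$, while you write the Fourier-conjugate form $M(t)D(t)\mathcal{F}\bigl(M(t)-I\bigr)U(-t)u_j$ and invoke the pointwise bound $|e^{ix^2/(4t)}-1|\lesssim t^{-1/2}|x|$, which is the identical inequality viewed on the physical rather than the frequency side.
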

\begin{proof}
It follows that
\begin{align*}
	u_j(t) = U(t) \F^{-1} w_j(t)
	= M(t) D(t) w_j(t) + M(t) D(t) (U(-\tfrac1{4t})-1) w_j(t).
\end{align*}
For $t>0$, one has
\begin{align*}
	\norm{M(t) D(t) (U(-\tfrac1{4t})-1) w_j(t)}_{L^2} ={}& \norm{(U(-\tfrac1{4t})-1) w_j(t)}_{L^2} \\
	\lesssim{}& t^{-\frac12} 
	\norm{\partial_x w_j(t)}_{L^2} \\ 
	={}& t^{-\frac12} 
	\norm{J(t) u_j(t)}_{L^2} \\ 
	\lesssim{}& \eps t^{-\frac12 + \delta \eps^2}
\end{align*}
and
\begin{align*}
	\norm{M(t) D(t) (U(-\tfrac1{4t})-1) w_j(t)}_{L^\infty} ={}& (2t)^{-\frac12}
	\norm{(U(-\tfrac1{4t})-1) w_j(t)}_{L^\infty} \\
	\lesssim{}& t^{-\frac12} 
	\norm{(U(-\tfrac1{4t})-1)w_j(t)}_{L^2}^\frac12
	\norm{\partial_x w_j(t)}_{L^2}^\frac12 \\ 
	\lesssim{}& t^{-\frac34} 
	\norm{\partial_x w_j(t)}_{L^2} \\ 
	\lesssim{}& \eps t^{-\frac34 + \delta \eps^2}
\end{align*}
as desired.
\end{proof}
By the virtue of the proposition,
the matter boils down to finding the asymptotic behavior of $w_j$.
For this purpose, we introduce
 $(\alpha_1(t,\xi),\alpha_2(t,\xi))$ by
\begin{equation}\label{E:alphadef}
	\alpha_j(\tfrac12 \log t,\xi) = w_j (t, \xi)
\end{equation}
for $t >0$. 
One sees from \eqref{E:wsys} that $(\alpha_1,\alpha_2)$
solves
\begin{equation}\label{E:alphasys}
	i \partial_t \alpha_j
	= 
	 F_j(\alpha_1,\alpha_2) + \widetilde{r}_j
\end{equation}
for $j=1,2$, where
\begin{equation}\label{E:trdef}
\widetilde{r}_j(t;\xi) :=2e^{2t} r_j (e^{2t}, \xi).
\end{equation}
Here, $r_j$ is the error term given in \eqref{E:rdef}.
Note that \eqref{E:alphasys} is regarded as the ODE system \eqref{E:Asys}
with an error term.

Let us quickly summarize estimates on error terms $r_j$
and translate them into those on $\widetilde{r}_j$.

%
\begin{proposition}
Let $r_j$ and $\widetilde{r}_j$ be as in \eqref{E:rdef} and \eqref{E:trdef}, respectively. 
Let $\delta$ and $\eps_0$ be as in Proposition \ref{P:gb}.
If $\eps < \eps_0$ then it holds that
\[
	\norm{r_j(t)}_{L^2}
	\lesssim \eps^3 t^{-\frac32 + 3\delta \eps^2}, \quad
	\norm{r_j(t)}_{L^\infty}
	\lesssim \eps^3 t^{-\frac54 + 3\delta \eps^2}
\]
for $t>1$
and hence
\begin{equation}\label{E:trerror}
	\norm{\widetilde{r}_j(t)}_{L^2_\xi}
	\lesssim \eps^3 e^{-(1-6\delta \eps^2)t},\quad
	\norm{\widetilde{r}_j(t)}_{L^\infty_\xi}
	\lesssim \eps^3 e^{-(\frac12-6\delta \eps^2)t}
\end{equation}
for $t>-1$.
\end{proposition}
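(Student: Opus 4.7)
The proposition is essentially a collection/refinement of estimates that already appeared within the proofs of Propositions \ref{P:gb} and \ref{P:gb2}, now promoted to a clean statement using the global bounds. My plan is therefore to recycle those arguments and then translate the results to $\widetilde{r}_j$ by a direct computation.

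First I would decompose $r_j=\mathrm{I}_j+\mathrm{II}_j$ and estimate each piece in $L^2$ and $L^\infty$ separately. For $\mathrm{I}_j$ the key input is the smoothing property of $U(-1/(4t))-1$, namely $\|(U(-\tfrac1{4t})-1)f\|_{L^2}\lesssim t^{-1/2}\|\partial_x f\|_{L^2}$, combined with Gagliardo–Nirenberg to pick up an additional $t^{-1/4}$ in $L^\infty$. Applying this with $f=D(t)^{-1}M(t)^{-1}F_j(u_1,u_2)$, using $\|\partial_x D(t)^{-1}M(t)^{-1}g\|_{L^2}=\|J(t)g\|_{L^2}$ and the fact that $J(t)$ acts as a derivation on the gauge-invariant cubic nonlinearity, one bounds the relevant $L^2$-norm by $t^{-1}Y_T^2 X_T$, which yields the estimates (already established in \eqref{E:I_est} for $L^\infty$ and in the proof of Proposition \ref{P:gb2} for $L^2$)
\[
\|\mathrm{I}_j(t)\|_{L^2}\lesssim t^{-\frac32+\delta\eps^2}Y_T^2 X_T,\qquad
\|\mathrm{I}_j(t)\|_{L^\infty}\lesssim t^{-\frac54+\delta\eps^2}Y_T^2 X_T.
\]

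For $\mathrm{II}_j$ I would write the difference $F_j(U(-\tfrac1{4t})\vec w)-F_j(\vec w)$ as a sum of trilinear expressions, each containing one factor of $(U(-\tfrac1{4t})-1)w_k$ and two factors of either $U(-\tfrac1{4t})w_k$ or $w_k$. Estimating the first factor by the same smoothing inequality (yielding $t^{-1/2}\|\partial_x w_k\|_{L^2}$ in $L^2$, and $t^{-3/4}\|\partial_x w_k\|_{L^2}$ in $L^\infty$), and the remaining two factors by $L^\infty$ bounds via \eqref{E:uwrelation2} and Gagliardo–Nirenberg, reproduces \eqref{E:II_est} and its $L^2$-analogue from the proof of Proposition \ref{P:gb2}:
\[
\|\mathrm{II}_j(t)\|_{L^2}\lesssim t^{-\frac32+3\delta\eps^2}(Y_T^2 X_T+X_T^3),\qquad
\|\mathrm{II}_j(t)\|_{L^\infty}\lesssim t^{-\frac54+3\delta\eps^2}(Y_T^2 X_T+X_T^3).
\]
At this point the global bound \eqref{E:XYglobal}, $X_T+Y_T\lesssim\eps$ valid uniformly in $T$, replaces every occurrence of $X_T, Y_T$ by $\eps$ and gives the two stated estimates on $r_j$.

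Finally, the bounds on $\widetilde r_j$ follow purely from the definition $\widetilde r_j(t;\xi)=2e^{2t}r_j(e^{2t},\xi)$ and the substitution $s=e^{2t}$: the $L^2_\xi$ and $L^\infty_\xi$ norms of $\widetilde r_j(t)$ equal $2e^{2t}$ times the corresponding norms of $r_j(e^{2t})$, so
\[
\|\widetilde r_j(t)\|_{L^2_\xi}\lesssim \eps^3 e^{2t}e^{(-\frac32+3\delta\eps^2)2t}=\eps^3 e^{-(1-6\delta\eps^2)t},
\]
and analogously the $L^\infty_\xi$ bound follows with the exponent $-(\tfrac12-6\delta\eps^2)$. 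There is no genuine obstacle here; the only mild point to be careful about is bookkeeping of the powers of $t$ contributed respectively by the smoothing factor, the dilation $D(t)^{-1}$, and the loss $t^{\delta\eps^2}$ inside the bootstrap quantity $X_T$, and the verification that these combine cleanly in the change of variables $s=e^{2t}$.
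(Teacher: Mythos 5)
Your proposal is correct and follows exactly the same route as the paper: the $L^\infty$ bounds on $r_j$ are read off from \eqref{E:I_est} and \eqref{E:II_est} with $X_T,Y_T\lesssim\eps$ from Proposition \ref{P:gb}, the $L^2$ bounds are those already derived in the proof of Proposition \ref{P:gb2}, and the bounds on $\widetilde r_j$ are obtained from the substitution $s=e^{2t}$ in the definition \eqref{E:trdef}. There is nothing substantively different from the paper's own (very terse) proof.
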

\begin{proof}
The $L^\infty$-estimate follows from \eqref{E:I_est} and \eqref{E:II_est}, together with the conclusion of Proposition \ref{P:gb}.
The $L^2$-estimate is established in the proof of Proposition \ref{P:gb2}.
The last two estimates are rephrase of the first two.
\end{proof}

Now we are in a position to complete the proof of Theorem \ref{T:main}.

\begin{proof}[Proof of Theorem \ref{T:main}]
We shall construct a one-parameter family $\{ (A_1(\cdot,\xi),A_2(\cdot,\xi)) \}_{\xi \in \R}$
of 
solutions to the ODE system \eqref{E:Asys} such that
\[
	\norm{\alpha_j(t) - A_j(t)}_{L^2_\xi} \lesssim \eps^3 e^{-(1-6\delta \eps^2 )t}, \quad
	\norm{\alpha_j(t) - A_j(t)}_{L^\infty_\xi} \lesssim \eps^3 e^{-(\frac12-6\delta \eps^2 )t}
\]
for $t\ge0$.
To this end, we use the argument in Hayashi-Li-Naumkin.
Let us construct a sequence of approximate functions $\{ (A_1^{(n)},A_2^{(n)}) \}_{n\ge0}$ as follows:
We first define $A_j^{(0)}(t,\xi) := \alpha_j(t,\xi)$ and
\[
	A_j^{(1)}(t,\xi) := \alpha_j(t,\xi) -i \int_t^\infty \widetilde{r}_j(s,\xi)ds.
\]	
We remark that, for any $t_1,t_2 \in [0,\infty)$,
\begin{equation}\label{E:appf0}
	A_j^{(1)}(t_2,\xi) - A_j^{(1)}(t_1,\xi)
	=  -i \int_{t_1}^{t_2} F_j(\alpha_1,\alpha_2)(s,\xi)ds.
\end{equation}
Then, we inductively define
\[
	A_j^{(n)}(t,\xi) := A_j^{(n-1)}(t,\xi)
	+i \int_t^\infty \(F_j(A_1^{(n-1)},A_2^{(n-1)})-F_j(A_1^{(n-2)},A_2^{(n-2)})\)(s,\xi)ds
\]
for $n\ge2$.
Let us prove that $\{ (A_1^{(n)},A_2^{(n)}) \}_{n\ge0}$ is well-defined in 
\[
	Z:=\left\{  (a_1,a_2) \in ( C([0,\infty)\times \R) )^2 ; 
	\begin{aligned}
	&\sum_{j=1,2} \norm{a_j(t)-\alpha_j(t)}_{L^2_\xi (\R)} \le C_1 \eps^3 e^{-(1-6\delta \eps^2 )t},\\
	&\sum_{j=1,2} \norm{a_j(t)-\alpha_j(t)}_{L^\infty_\xi (\R)} \le C_1 \eps^3 e^{-(\frac12-6\delta \eps^2 )t}
	\end{aligned}
	\right\}
\]
for suitable $C_1$.
Note that, since there exists $C_0$ such that
\[
	\sum_{j=1,2} \norm{\alpha_j(t)}_{L^\infty ([0,\infty)\times \R)} \le C_0 \eps
\]
in view of Proposition \ref{P:gb}, one sees that
for any choice of $C_1>0$ there exists $\eps_0 >0$
such that
\[
	\sum_{j=1,2} \norm{a_j(t)}_{L^\infty ([0,\infty)\times \R)} \le 2C_0 \eps
\]
holds
for any $(a_1,a_2) \in Z$.

By definition, $(A_1^{(0)},A_2^{(0)}) \in Z$ is obvious.
Further, thanks to \eqref{E:trerror}, there exists $C_2>0$ such that 
\[
	\sum_{j=1,2} \norm{A_j^{(1)}(t)-A_j^{(0)}(t)}_{L^p_\xi (\R)}=\sum_{j=1,2} \norm{\int_t^\infty \widetilde{r}_j(s)ds}_{L^p_\xi (\R)} \le C_2 \eps^3 e^{-(\frac12+\frac1p-6\delta \eps^2 )t}
\]
for $p=2,\infty$ and $t\ge0$.
We may suppose that $\frac12-6\delta \eps_0^2\ge \frac14$  by letting $\eps_0$ further small, if necessary.
Then, one can choose the constant from the time-integration independently of $\delta$ and $\eps$. 
We now choose $C_1=2C_2$. Then,
$(A_1^{(1)},A_2^{(1)}) \in Z$.
Now, suppose for some $k\ge1$ that
$(A_1^{(n)},A_2^{(n)}) \in Z$ and 
\begin{equation}\label{E:appf1}
	\sum_{j=1,2} \norm{A_j^{(n)}(t)-A_j^{(n-1)}(t)}_{L^p_\xi (\R)} \le 2^{-n+1}C_2 \eps^3 e^{-(\frac12+\frac1p-6\delta \eps^2 )t}	
\end{equation}
hold for $p=2,\infty$, $t\ge0$, and $n\in [1,k]$.
Then, since
\[
	|F_j(A_1^{(k)},A_2^{(k)})-F_j(A_1^{(k-1)},A_2^{(k-1)})| \le
	C\(\sum_{j=1,2}|A_j^{(k)}|+\sum_{j=1,2}|A_j^{(k-1)}|\)^2 \sum_{j=1,2} |A_j^{(k)}-A_j^{(k-1)}|  ,
\]
one sees from the assumption of the induction that $(A_1^{(k+1)},A_2^{(k+1)})$
is well-defined as an element of $C([0,\infty)\times\R)$ and
further obtains the bound
\begin{align*}
	\sum_{j=1,2} \norm{A_j^{(k+1)}(t)-A_j^{(k)}(t)}_{L^p_\xi (\R)}
\le{}& 16 C_0^2 C \eps_0^2 \int_t^\infty
2^{-k+1}C_2 \eps^3 e^{-(\frac12+\frac1{p}-6\delta \eps^2 )s}ds \\
\le {}& (128C_0^2 C \eps_0^2 )
2^{-k}C_2 \eps^3 e^{-(\frac12+\frac1p-6\delta \eps^2 )t}
\end{align*}
for $p=2,\infty$ and $t\ge0$,
which implies \eqref{E:appf1} for $n=k+1$ if $\eps_0$ is chosen so that $128C_0^2 C \eps_0^2  \le1$.
Moreover, 
\[
	\sum_{j=1,2} \norm{A_j^{(k+1)}(t)-\alpha_j(t)}_{L^p_\xi (\R)}
\le \sum_{\ell=1}^{k+1} \sum_{j=1,2} \norm{A_j^{(\ell)}(t)-A_j^{(\ell-1)}(t)}_{L^p_\xi (\R)}
\le 2C_2 \eps^3 e^{-(\frac12+\frac1p-6\delta \eps^2 )t}
\]
holds for $p=2,\infty$ and $t\ge0$,
and hence $(A_1^{(k+1)},A_2^{(k+1)}) \in Z$.
Thus, an induction argument shows that
$\{ (A_1^{(n)},A_2^{(n)}) \}_{n\ge0}\in Z$ is well-defined 
and \eqref{E:appf1} holds true for all $n\ge1$.
One easily sees that $Z$ is complete with respect to the metric on $L^\infty ([0,\infty), (L^2(\R) \cap L^\infty(\R))^2 )$. Note that \eqref{E:appf1} implies that $\{ (A_1^{(n)},A_2^{(n)}) \}_{n\ge0}\subset Z$ is a Cauchy sequence with respect to this metric.
Hence, one finds a limit $(A_1,A_2) \in Z$.

Let us confirm that the limit $(A_1,A_2)$ is a solution to \eqref{E:Asys}.
To this end, we note that \eqref{E:appf1} implies that
 $\{ (A_1^{(n)},A_2^{(n)}) \}_{n\ge0}$ converges to $(A_1,A_2) $
also in $L^1([0,\infty),  L^\infty(\R)^2)$.
Hence,
\[
	\int_t^\infty F_j(A_1^{(n)},A_2^{(n)}) (s,\xi)ds
	\to \int_t^\infty F_j(A_1,A_2) (s,\xi)ds
\]
in $L^\infty([0,\infty)\times\R)$ as $n\to\infty$.
Then, by using \eqref{E:appf0} and the definition of the sequence, it holds
for each $t_1,t_2 \in [0,\infty)$ that
\begin{align*}
	A_j(t_2)-A_j(t_1) ={}& \lim_{n\to\infty}
	( A_j^{(n)}(t_2)-A_j^{(n)}(t_1)  ) \\
	={}& A_j^{(1)}(t_2)-A_j^{(1)}(t_1) -i\lim_{n\to\infty} \int_{t_1}^{t_2}
	F_j(A^{(n-1)}_1,A^{(n-1)}_2)ds +i\int_{t_1}^{t_2}
	F_j(A^{(0)}_1,A^{(0)}_2)ds\\
	={}&-i\int_{t_1}^{t_2}
	F_j(A_1,A_2)ds.
\end{align*}
This shows that $(A_1,A_2)$ is a solution to \eqref{E:Asys}.

Finally, we show the asymptotic estimate \eqref{E:mainap}.
One sees from Proposition \ref{P:uw} and
the fact that $(A_1,A_2) \in Z$ that
\begin{align*}
	u_j(t)&= (2it)^{-\frac12} e^{i \frac{x^2}{4t}} w_j(t, \tfrac{x}{2t}) + O(\eps t^{-\frac34 + \frac1{2p} + \delta \eps^2}) \\
&=(2it)^{-\frac12} e^{i \frac{x^2}{4t}} \alpha_j(\tfrac12 \log t, \tfrac{x}{2t}) + O(\eps t^{-\frac34 + \frac1{2p}+ \delta \eps^2})\\
&=(2it)^{-\frac12} e^{i \frac{x^2}{4t}} A_j(\tfrac12 \log t, \tfrac{x}{2t}) 
+ O(t^{-\frac12+\frac1{p}} (\eps^3 t^{-\frac14-\frac1{2p}+3\delta \eps^2}))
+ O(\eps t^{-\frac34 + \frac1{2p}+ \delta \eps^2})\\
&=(2it)^{-\frac12} e^{i \frac{x^2}{4t}} A_j(\tfrac12 \log t, \tfrac{x}{2t}) 
+ O(\eps t^{-\frac34 + \frac1{2p}+ 3\delta \eps^2})
\end{align*}
in $L^p(\R)$ as $t\to\infty$ for $p=2,\infty$.
The non-endpoint case $p\in(2,\infty)$ follows by interpolation.
Writing $3\delta$ as $\delta$, we obtain \eqref{E:mainap}.
\end{proof}

\section{Explicit asymptotic profile for \eqref{E:NLS1}}\label{S:exp}

In this section, we prove Theorem \ref{T:explicit}.
We use the argument in \cite{M3}.
We first obtain the explicit formulas for the quadratic quantities of solutions.
Let us only consider the nontrivial solutions.

By \eqref{E:gQQQ1}, one has $\mathcal{R}'=0$ and hence $\mathcal{R}(t)=\mathcal{R}(0)$.
Let us consider the other quantities $(\rho_1,\rho_2,\mathcal{I})$.
We introduce $\rho_\pm(t):= \rho_1(t) \pm \rho_2(t)$.
Then, one sees that
\[
	\rho_+' =  -\mathcal{I} \rho_- ,\quad
	\rho_-' = \mathcal{I} \rho_+, \quad \mathcal{I}' = -2\rho_+\rho_-.
\]
This implies that $(f,g,h):=(\sqrt2 \rho_-, \mathcal{I}, \sqrt{2} \rho_+)$ is a solution to the ODE system
\[
	f' = gh, \quad g' = -hf, \quad h' = -fg.
\]
By \cite[Lemma 2.4]{M3}, one obtains
\begin{align*}
	\rho_+(t) &= \tfrac{\alpha}{\sqrt2} \dn\( \alpha t+t_0, \tfrac{\beta^2}{\alpha^2 } \),\\
	\rho_-(t) &= \tfrac{\beta}{\sqrt2}\sn\( \alpha t+t_0, \tfrac{\beta^2}{\alpha^2 } \),\\
	\mathcal{I}(t) &
	=\beta\cn\( \alpha t+t_0 , \tfrac{\beta^2}{\alpha^2 } \)
\end{align*}
with suitable $t_0$, where
\[
	\alpha := \sqrt{2(\rho_+(0)^2+\rho_-(0)^2)} , \quad 
	\beta := \sqrt{2\rho_-(0)^2+\mathcal{I}(0)^2}.
\]
We remark that $\alpha \ge \sqrt2 \beta \ge 0$ holds and
that $\alpha>0$ holds for all nontrivial solution.
This reads as
\begin{align*}
	\rho_1(t) &= 2^{-\frac32} \alpha \( \dn\( \alpha t + t_0, m \)+ \sqrt{m} \sn\( \alpha t + t_0, m \)\) ,\\
	\rho_2(t) &= 2^{-\frac32} \alpha \( \dn\( \alpha t + t_0, m \)-\sqrt{m} \sn\( \alpha t + t_0, m \)\) ,\\
	\mathcal{I}(t) &
	=\alpha \sqrt{m} \cn\( \alpha t + t_0, m\)
\end{align*}
with
\[
	m := \tfrac{\beta^2}{\alpha^2}=\tfrac{2(\rho_1(0)-\rho_2(0))^2+\mathcal{I}(0)^2}{4(\rho_1(0)^2+\rho_2(0)^2)} 
	= \tfrac12 - \tfrac{\mathcal{R}(0)^2}{4(\rho_1(0)^2+\rho_2(0)^2)} \in [0,\tfrac12]
\]
as desired.
Note that $t_0$ is easily specified from the relation
\[
	\tfrac1{R_1 \sqrt{m} }(\sqrt2 (\rho_1(0)-\rho_2(0)),\mathcal{I}(0))
	=(\sn(t_0, m ),\cn( t_0, m )).
\]
Also note that $ m = \frac12 $ if and only if
$\mathcal{R}(0)=0$.

Once we obtain the formula for the quadratic quantities, the formula for the solution is given by \cite[Theorem 4.1]{M3}. The theorem reads as follows in the present context:
\begin{itemize}
\item
If $\mathcal{R}(0) \neq 0$ then
\[
	A_1 (t) = \sqrt{\rho_1(t)} \tfrac{A_1(0)}{|A_1(0)|} \exp\(- i \mathcal{R}(0) \int_0^t \tfrac{\rho_2(\sigma) }{2\rho_1(\sigma)}  d\sigma \)
\]
and
\[
	A_2 (t) = \tfrac{\mathcal{R}(0)+i\mathcal{I}(t)}{2\sqrt{\rho_1(t)}}
	\tfrac{A_1(0)}{|A_1(0)|} \exp\(- i \mathcal{R}(0)\int_0^t \tfrac{\rho_2(\sigma) }{2\rho_1(\sigma)}  d\sigma \)
\]
hold for all $t\in \R$.
\item
If $\mathcal{R}(0)=0$ and
$A_1(0)\neq0$ then
\[
	A_1 (t) = (-1)^{k_1(t)} \sqrt{\rho_1(t)} \tfrac{A_1(0)}{|A_1(0)|} 
\]
and
\[
	A_2 (t) = i(-1)^{k_1(t)} \tfrac{\mathcal{I}(t)}{2\sqrt{\rho_1(t)}}
	\tfrac{\alpha_1(0)}{|\alpha_1(0)|} 
\]
hold for all $t\in I_{\max}$,
where
\[
	k_1 (t) = \begin{cases}
	\# (\{ s \in \R \ |\ \rho_1(s) =0 \} \cap [0,t]) & t>0, \\
	\# (\{ s \in \R \ |\ \rho_1(s) =0 \} \cap [t,0]) & t<0 
	\end{cases}
\]
is finite for all $t\in \R$.
\item
If $\mathcal{R}(0)=0$ and $A_2(0)\neq0$ then
\[
	A_1 (t) =  -i (-1)^{k_2(t)}
	\tfrac{\mathcal{I}(t)}{2\sqrt{\rho_2(t)}}
	\tfrac{A_2(0)}{|A_2(0)|} 
\]
and
\[
	A_2 (t) =   (-1)^{k_2(t)}
	\sqrt{\rho_2(t)}
	\tfrac{A_2(0)}{|A_2(0)|} 
\]
hold for all $t\in \R$,
where
\[
	k_2 (t) = \begin{cases}
	\# (\{ s \in \R \ |\ \rho - \mathcal{D}(s) =0 \} \cap [0,t]) & t>0, \\
	\# (\{ s \in \R \ |\ \rho - \mathcal{D}(s) =0 \} \cap [t,0]) & t<0 
	\end{cases}
\]
is finite for all $t\in \R$.
\end{itemize}

We obtain the desired representation of the solution by plugging the representation of $\rho_1$ and $\rho_2$ to these formulas.
Note that a computation shows that
a solution satisfying $\mathcal{R}=0$ has the following form:
\begin{align*}
	A_1(t)&=2^{-1} \alpha^{\frac12} e^{i\theta_0} \sn (\tfrac12( \alpha t + t_0) ,\tfrac12) \sqrt{1+ \nd ( \alpha t + t_0 ,\tfrac12)}, \\
	A_2(t)&=i 2^{-1} \alpha^{\frac12} e^{i\theta_0} \cd (\tfrac12( \alpha t + t_0),\tfrac12) \sqrt{1+ \nd ( \alpha t + t_0,\tfrac12)}
\end{align*}
with suitable  $\theta_0,t_0 \in \R$.

\section{Analysis of the quartic quantity of the ODE system}\label{S:pre}
In this section, we study the property of the conserved quantity given in \eqref{E:Q4def} of the ODE system \eqref{E:Asys}.

\subsection{Matrix-vector representation and change of variables}

The system \eqref{E:Asys} are closed under the linear transform of the unknowns.
By introducing the matrix-vector representation,
the change of the nonlinearities is described
as a matrix manipulation.
Note that it is applicable to the NLS system
\eqref{E:generalNLS}.
\begin{proposition}[\cite{MSU2}]\label{P:change}
Let $(\mathscr{A},\mathscr{V})$ be a matrix-vector representation of a system of the form \eqref{E:generalNLS} or \eqref{E:Asys}
with unknown $(u_1,u_2)$.
Define a new unknown $(v_1,v_2)$ via
\[
	\begin{bmatrix} v_1 \\ v_2 \end{bmatrix} = \mathscr{M} \begin{bmatrix} u_1 \\ u_2 \end{bmatrix},\quad
	\mathscr{M} = \begin{bmatrix} a & b \\ c & d \end{bmatrix} \in \text{GL}_2(\R).
\]
Let $(\mathscr{A}',\mathscr{V}')$ be the matrix-vectorl representation of the system for $(v_1,v_2)$. Then,
\begin{equation}\label{E:Aprime}
	\mathscr{A}' = \frac{1}{\det \mathscr{M}}  \mathscr{D}(\mathscr{M}) \mathscr{A} \mathscr{D}(\mathscr{M})^{-1}
\end{equation}
and
\begin{equation}\label{E:Vprime}
	\mathscr{V}'
	= \frac1{\det \mathscr{M}}  \mathscr{D}(\mathscr{M}) \mathscr{V}
\end{equation}
hold, where
\[
	\mathscr{D}(\mathscr{M}) := \frac1{\det \mathscr{M}}
	\begin{bmatrix} d^2 & -2cd & c^2 \\ -bd & ad+ bc & -ac \\ b^2 & -2ab & a^2 \end{bmatrix}
	\in {SL}_3 (\R),
\]
\begin{equation*}
	\mathscr{D}(\mathscr{M})^{-1} :=\frac1{\det \mathscr{M}}\begin{bmatrix} a^2 & 2ac & c^2 \\ ab & ad+ bc & cd \\ b^2 & 2bd & d^2 \end{bmatrix} \in SL_3(\R).
\end{equation*}
\end{proposition}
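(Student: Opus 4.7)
The plan is to leverage Proposition \ref{P:quadODE} as an intrinsic characterization of $\mathscr{A}$ that can be pushed through the change of unknowns. Since $(\mathscr{A},\mathscr{V})$ is defined from $(\lambda_1,\ldots,\lambda_{12})$ algebraically by \eqref{E:matrixC}--\eqref{E:vectorV}, and the coefficients transform identically under $v = \mathscr{M} u$ regardless of whether we view the system as \eqref{E:generalNLS} or \eqref{E:Asys}, it suffices to argue in the ODE setting.

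The first step is to record how the gauge-invariant quadratic quantities transform. Writing $\mathscr{M} = \bigl[\begin{smallmatrix} a & b \\ c & d \end{smallmatrix}\bigr]$ and expanding $|v_1|^2$, $|v_2|^2$, $\bar v_1 v_2$ in terms of $(u_1,u_2)$ gives, for the primed quadratic quantities attached to $(v_1,v_2)$,
\[
	[\rho_1', \mathcal{R}', \rho_2'] = \det(\mathscr{M})\, [\rho_1, \mathcal{R}, \rho_2]\, \mathscr{D}(\mathscr{M})^{-1},
	\qquad \mathcal{I}' = \det(\mathscr{M})\, \mathcal{I},
\]
which is a purely algebraic identity verified by matching monomials in $a,b,c,d$ against the stated form of $\mathscr{D}(\mathscr{M})^{-1}$. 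Next, I would apply Proposition \ref{P:quadODE} to the $(v_1,v_2)$-system with an arbitrary $\mathbf{w} \in \R^3$, substitute the transformation law, and invoke Proposition \ref{P:quadODE} for the $(u_1,u_2)$-system with the vector $\mathscr{D}(\mathscr{M})^{-1}\mathbf{w}$ in the role of $(a,b,c)^{T}$. The two expressions for $\tfrac{d}{dt}\bigl([\rho_1', \mathcal{R}', \rho_2']\mathbf{w}\bigr)$ collapse to
\[
	[\rho_1, \mathcal{R}, \rho_2]\, \bigl(\mathscr{A}\,\mathscr{D}(\mathscr{M})^{-1} - \det(\mathscr{M})\, \mathscr{D}(\mathscr{M})^{-1} \mathscr{A}'\bigr)\mathbf{w} = 0,
\]
and since $(\rho_1, \mathcal{R}, \rho_2)$ sweeps out an open cone in $\R^3$ as $(u_1,u_2)$ varies over $\C^2$, the bracketed matrix must vanish, yielding \eqref{E:Aprime}.

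For the vector identity \eqref{E:Vprime}, my plan is to seek an analogous dynamical identity characterizing $\mathscr{V}$. A natural candidate is $\tfrac{d}{dt}\mathcal{I}$ along the ODE flow: from $i\dot A_j = F_j$ one computes $\tfrac{d}{dt}\mathcal{I} = 2\Re(\bar F_1 A_2 - \bar A_1 F_2)$, a real gauge-invariant quartic polynomial in $(A_1,A_2)$ which must decompose in the basis of monomials in $\rho_1,\rho_2,\mathcal{R},\mathcal{I}$. The part already accounted for by $\mathscr{A}$ can be extracted by multiplying \eqref{E:gQQQ1} against $\rho_j$ or $\mathcal{R}$; the remaining piece should be governed by exactly the three entries of $\mathscr{V}$. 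Once that dynamical formula is in hand, the previous step's argument repeats almost verbatim, with the factor $\mathcal{I}' = \det(\mathscr{M})\,\mathcal{I}$ supplying the correct power of $\det(\mathscr{M})$ and producing \eqref{E:Vprime}.

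The main obstacle is this second stage: isolating the precise dynamical identity that $\mathscr{V}$ controls cleanly, since \eqref{E:vectorV} has a somewhat ad hoc appearance and mixes several coefficients in a nontrivial way. If the intrinsic route proves too intricate, the fallback is a brute-force verification in which one expands $\tilde F_j(v) = \sum_k \mathscr{M}_{jk} F_k(\mathscr{M}^{-1}v)$, reads off the twelve new coefficients $\lambda_k'$ as homogeneous polynomials in the entries of $\mathscr{M}$, and substitutes them into \eqref{E:matrixC}--\eqref{E:vectorV}. That computation is mechanical but tedious, and would confirm both \eqref{E:Aprime} and \eqref{E:Vprime} simultaneously.
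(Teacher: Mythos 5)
Your argument for the matrix identity \eqref{E:Aprime} is correct and self-contained. The transformation law
\[
	\begin{bmatrix} \rho_1' & \mathcal{R}' & \rho_2' \end{bmatrix}
	= (\det\mathscr{M})\begin{bmatrix} \rho_1 & \mathcal{R} & \rho_2 \end{bmatrix}\mathscr{D}(\mathscr{M})^{-1},
	\qquad \mathcal{I}' = (\det\mathscr{M})\,\mathcal{I}
\]
is verified by direct expansion of $|v_1|^2$, $|v_2|^2$, $\overline{v_1}v_2$, and matches the stated form of $\mathscr{D}(\mathscr{M})^{-1}$. Inserting this into \eqref{E:gQQQ1} for the $v$-system, invoking it again for the $u$-system with $\mathscr{D}(\mathscr{M})^{-1}\mathbf{w}$ in the role of $(a,b,c)^{T}$, and dividing by $\mathcal{I}\neq0$, the resulting vanishing of
$\begin{bmatrix}\rho_1 & \mathcal{R} & \rho_2\end{bmatrix}\bigl(\mathscr{A}\mathscr{D}(\mathscr{M})^{-1}-(\det\mathscr{M})\mathscr{D}(\mathscr{M})^{-1}\mathscr{A}'\bigr)\mathbf{w}$
for all $\mathbf{w}$ and a spanning family of $(\rho_1,\mathcal{R},\rho_2)$ indeed forces \eqref{E:Aprime}.

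The route you sketch for the vector identity \eqref{E:Vprime}, however, has a genuine gap. You hope that $\tfrac{d}{dt}\mathcal{I}$ decomposes into an $\mathscr{A}$-governed part plus a remainder controlled by $\mathscr{V}$. It does not: as recorded in Lemma \ref{L:quadidentity} (identity \eqref{E:qqq2}, and the remark following it), the quantity $\Re(\overline{A_1}F_2 - \overline{A_2}F_1) = -\tfrac12\tfrac{d}{dt}\mathcal{I}$ equals $\tfrac12\begin{bmatrix}\rho_1 & \mathcal{R} & \rho_2\end{bmatrix}\mathscr{B}\begin{bmatrix}\rho_1 & \mathcal{R} & \rho_2\end{bmatrix}^{T}$ with $\mathscr{B}$ depending on $\mathscr{A}$ alone; the ``remaining piece'' you expect to extract is identically zero. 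This is not a computational accident but structural: the $\mathscr{V}$-part of the nonlinearity is the real scalar potential $\mathcal{V}(u_1,u_2)\cdot(u_1,u_2)$, and such a common real phase-rotation term leaves every gauge-invariant quadratic quantity $\rho_1,\rho_2,\mathcal{R},\mathcal{I}$ inert, so no dynamical identity at the quadratic level can ever detect $\mathscr{V}$. To salvage an intrinsic proof of \eqref{E:Vprime}, characterize $\mathscr{V}$ instead as the coefficient vector of the scalar potential: $\mathcal{V}(u_1,u_2)=\begin{bmatrix}\rho_1 & \mathcal{R} & \rho_2\end{bmatrix}\mathscr{V}$. Because $\mathscr{M}\cdot\mathcal{V}(\mathscr{M}^{-1}v)\,\mathscr{M}^{-1}v = \mathcal{V}(\mathscr{M}^{-1}v)\,v$, the potential is a scalar invariant: $\mathcal{V}'(v)=\mathcal{V}(\mathscr{M}^{-1}v)$. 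Applying your same quadratic-transformation law with $\mathscr{M}^{-1}$ (using $\mathscr{D}(\mathscr{M}^{-1})=\mathscr{D}(\mathscr{M})^{-1}$) gives
$\begin{bmatrix}\rho_1(\mathscr{M}^{-1}v) & \cdots\end{bmatrix} = (\det\mathscr{M})^{-1}\begin{bmatrix}\rho_1(v) & \cdots\end{bmatrix}\mathscr{D}(\mathscr{M})$,
hence $\mathscr{V}'=(\det\mathscr{M})^{-1}\mathscr{D}(\mathscr{M})\mathscr{V}$, which is \eqref{E:Vprime}. Your brute-force fallback would also work, but the remark above shows why the $\tfrac{d}{dt}\mathcal{I}$ intermediate step must be abandoned rather than repaired.
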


\subsection{Property of the Quartic quantity of the ODE system}
\label{S:pQ}
As mentioned in the introduction, we use the conservation of the quartic quantity
$\mathcal{Q}$, given in \eqref{E:Q4def}, for solutions to the ODE system \eqref{E:Asys}.
We collect properties of the quantity.
\begin{proposition}\label{P:pQ}
Suppose $\mathscr{A}^2$ has a negative eigenvalue $-k^2$ ($k>0$). 
Let $\Gamma=\ltrans{(\gamma_1,\gamma_2,\gamma_3)} \in W(-k^2,\mathscr{A}^2)$
and define a quartic homogeneous polynomial $\mathcal{Q}=\mathcal{Q}(A_1,A_2)$ by
\[
\mathcal{Q} = (\begin{bmatrix}
	\rho_1 & \mathcal{R} & \rho_2
	\end{bmatrix}\Gamma)^2
	+(\begin{bmatrix}
	\rho_1 & \mathcal{R} & \rho_2
	\end{bmatrix} \widetilde{\Gamma})^2
\]
with $\rho_1=|A_1|^2$, $\mathcal{R}= 2\Re (\overline{A_1}A_2)$, and $\rho_2=|A_2|^2$, where
$\widetilde{\Gamma}=k^{-1} \mathscr{A}\Gamma :=\ltrans{(	\widetilde{\gamma}_1 ,
	\widetilde{\gamma}_2 ,
	\widetilde{\gamma}_3
)}$.
The following three assertions are true:
\begin{enumerate}
\item
 $\mathcal{Q}$ is independent of the choice of $\Gamma \in W(-k^2,\mathscr{A}^2)$
in such a sense that another choice of non-zero $\Gamma$ gives
the same quartic polynomial up to a positive constant.
\item $\mathcal{Q}$ is invariant in the following sense: Introduce a new variable 
$(B_1,B_2)$ by
\[
	\begin{bmatrix} B_1 \\ B_2 \end{bmatrix} = 
\mathscr{M}	\begin{bmatrix} A_1 \\ A_2 \end{bmatrix}
\]
with $\mathscr{M}\in GL_2(\R)$ and let $\mathscr{A}'$ be the matrix given by \eqref{E:Aprime} with the same $\mathscr{M}$. Then, $-(k')^2$ is an eigenvalue of $(\mathscr{A}')^2$, where $k':=|\det \mathscr{M}|^{-1} k>0$.
For any non-zero $\Gamma' \in W(-(k')^2, (\mathscr{A}')^2)$, there exists $c>0$ such that
\[
	\mathcal{Q}(A_1,A_2) =c( (\begin{bmatrix}|B_1|^2 &2\Re (\overline{B_1}B_2) & |B_2|^2\end{bmatrix}\Gamma')^2+(\begin{bmatrix}|B_1|^2 &2\Re (\overline{B_1}B_2) & |B_2|^2\end{bmatrix} \widetilde{\Gamma}')^2),
\]
where $\widetilde{\Gamma}':=
\frac1{k'}\mathscr{A}'\Gamma'=\tfrac{|\det \mathscr{M}|}{k}\mathscr{A}' \Gamma'$.
\item
If $W(-k^2,\mathscr{A}) \cap \mathcal{P}_+\neq\emptyset$ then
\[
	\mathcal{Q} \sim_\Gamma (|A_1|^2 + |A_2|^2)^2
\]
holds.
On the other hand,
if $W(-k^2,\mathscr{A}) \cap \mathcal{P}_+=\emptyset$ then the equation $\mathcal{Q}=0$ possesses a nontrivial solution.
\end{enumerate}
\end{proposition}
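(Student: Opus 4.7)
The plan is to treat the three assertions separately, with the second reducing to the first through the transformation formula of Proposition \ref{P:change}, and the third being a separate algebraic/geometric argument on $\R^3$.

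For assertion (1), I would first observe that since $\mathscr{A}$ has real entries and admits the pure imaginary eigenvalue $ik$, the subspace $W(-k^2,\mathscr{A}^2)$ is two-dimensional. The map $T:= k^{-1}\mathscr{A}$ restricts to an endomorphism of $W(-k^2,\mathscr{A}^2)$ satisfying $T^2=-I$ on this subspace. Hence, in the basis $\{\Gamma,\widetilde{\Gamma}\}$, $T$ is represented by $\begin{bmatrix}0 & -1 \\ 1 & 0\end{bmatrix}$. Any other choice $\Gamma'=a\Gamma+b\widetilde{\Gamma}$ with $(a,b)\neq(0,0)$ then gives $\widetilde{\Gamma}'=T\Gamma'=-b\Gamma+a\widetilde{\Gamma}$. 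Writing $L_1:=\begin{bmatrix}\rho_1 & \mathcal{R} & \rho_2\end{bmatrix}\Gamma$ and $L_2:=\begin{bmatrix}\rho_1 & \mathcal{R} & \rho_2\end{bmatrix}\widetilde{\Gamma}$, linearity yields $L_1'=aL_1+bL_2$ and $L_2'=-bL_1+aL_2$, so $(L_1')^2+(L_2')^2=(a^2+b^2)(L_1^2+L_2^2)$. This gives invariance up to the positive constant $a^2+b^2$.

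For assertion (2), I would combine Proposition \ref{P:change} with a direct verification that the triple $(\rho_1,\mathcal{R},\rho_2)$ transforms appropriately. Writing the entries of $\mathscr{M}$ as $\left(\begin{smallmatrix} a&b\\c&d\end{smallmatrix}\right)$ and expanding $|B_j|^2$ and $2\Re(\overline{B_1}B_2)$, a direct computation shows
\[
\begin{bmatrix}|B_1|^2 & 2\Re(\overline{B_1}B_2) & |B_2|^2\end{bmatrix}
=\det\mathscr{M}\cdot\begin{bmatrix}\rho_1 & \mathcal{R} & \rho_2\end{bmatrix}\mathscr{D}(\mathscr{M})^{-1}.
\]
Using \eqref{E:Aprime}, I would then check that $(\mathscr{A}')^2=(\det\mathscr{M})^{-2}\mathscr{D}(\mathscr{M})\mathscr{A}^2\mathscr{D}(\mathscr{M})^{-1}$, so that $\mathscr{D}(\mathscr{M})^{-1}\Gamma'\in W(-k^2,\mathscr{A}^2)$ whenever $\Gamma'\in W(-(k')^2,(\mathscr{A}')^2)$, confirming also that $-(k')^2=-(\det\mathscr{M})^{-2}k^2$ is indeed an eigenvalue. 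Setting $\Gamma:=\mathscr{D}(\mathscr{M})^{-1}\Gamma'$, one computes $\mathscr{D}(\mathscr{M})^{-1}\widetilde{\Gamma}'=\sign(\det\mathscr{M})\widetilde{\Gamma}$. Combining the two identities, both $\begin{bmatrix}|B_1|^2 & 2\Re(\overline{B_1}B_2)& |B_2|^2\end{bmatrix}\Gamma'$ and $\begin{bmatrix}|B_1|^2 & 2\Re(\overline{B_1}B_2)& |B_2|^2\end{bmatrix}\widetilde{\Gamma}'$ differ from the corresponding $A$-quantities by the same absolute factor $|\det\mathscr{M}|$ (with possibly different signs that square away), so the sum of their squares equals $(\det\mathscr{M})^2\mathcal{Q}(A_1,A_2)$. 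Since $\Gamma$ may not be one of the Gammas used to define $\mathcal{Q}(A_1,A_2)$ initially, assertion (1) absorbs the remaining ambiguity into the positive constant.

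For assertion (3), I would argue using the characterization recalled in Remark 1.2: writing $W(-k^2,\mathscr{A}^2)=\mathbf{v}^\perp$ for a nonzero $\mathbf{v}=\ltrans{(v_1,v_2,v_3)}$, the condition $W\cap\mathcal{P}_+\neq\emptyset$ is equivalent to $v_2^2-4v_1v_3>0$. In the positive case, pick $\Gamma=\ltrans{(\gamma_1,\gamma_2,\gamma_3)}\in W\cap\mathcal{P}_+$; since $|\mathcal{R}|\le 2|A_1||A_2|$ and the quadratic form $\gamma_1 x^2+2\gamma_2 xy+\gamma_3 y^2$ is sign-definite, one gets $|L_1|=|\gamma_1|A_1|^2+\gamma_2\mathcal{R}+\gamma_3|A_2|^2|\gtrsim|A_1|^2+|A_2|^2$, hence $\mathcal{Q}\ge L_1^2\gtrsim(|A_1|^2+|A_2|^2)^2$, while the upper bound is immediate from the triangle inequality and $|\mathcal{R}|\le|A_1|^2+|A_2|^2$. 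In the opposite case $v_2^2\le 4v_1v_3$, the condition $\mathcal{Q}(A_1,A_2)=0$ amounts to $(|A_1|^2,\mathcal{R},|A_2|^2)\in W^\perp=\R\mathbf{v}$. Hence one looks for $t\in\R$ and $(A_1,A_2)\in\C^2$ with $|A_1|^2=tv_1$, $|A_2|^2=tv_3$, and $\mathcal{R}=tv_2$. The inequality $v_2^2\le 4v_1v_3$ forces $v_1v_3\ge 0$, so $v_1,v_3$ share a sign (possibly one being zero); choose $t$ with $tv_1,tv_3\ge 0$ and set $|A_j|=\sqrt{tv_{2j-1}}$. The constraint $|\mathcal{R}|\le 2|A_1||A_2|$ becomes exactly $v_2^2\le 4v_1v_3$, so an appropriate choice of the relative phase of $A_2/A_1$ realizes $\mathcal{R}=tv_2$, producing a nontrivial zero of $\mathcal{Q}$. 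The main obstacle is the bookkeeping in assertion (2): tracking which normalizations and signs appear when transporting eigenvectors through $\mathscr{D}(\mathscr{M})$, so that $\widetilde{\Gamma}'$ and $\widetilde{\Gamma}$ correspond consistently up to sign.
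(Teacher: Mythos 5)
Your proofs of assertions (1), (2), and the coercivity half of (3) follow essentially the same route as the paper: for (1) the rotation-by-$T=k^{-1}\mathscr{A}$ argument on the two-dimensional space $W(-k^2,\mathscr{A}^2)$ giving the factor $a^2+b^2$, and for (2) the paper's transformation identity
$\bigl[\,|B_1|^2\ \ 2\Re(\overline{B_1}B_2)\ \ |B_2|^2\,\bigr]=\det\mathscr{M}\cdot\bigl[\rho_1\ \mathcal{R}\ \rho_2\bigr]\mathscr{D}(\mathscr{M})^{-1}$ combined with \eqref{E:Aprime}. These are fine.

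Where you genuinely diverge from the paper is in the second half of assertion (3), the existence of a nontrivial zero of $\mathcal{Q}$ when $W(-k^2,\mathscr{A}^2)\cap\mathcal{P}_+=\emptyset$. The paper splits into two subcases according to whether $W(-k^2,\mathscr{A}^2)\cap\mathcal{P}_0$ is trivial or not, applies a normalizing change of variables in each subcase to bring the chosen eigenvector to $\ltrans{(1,0,0)}$ or $\ltrans{(0,1,0)}$, then argues by contradiction that one entry of $\widetilde{\Gamma}'$ must vanish (resp.\ that $\widetilde{\gamma}_1\widetilde{\gamma}_3<0$), and finally exhibits a zero $(B_1,B_2)$ by inspection. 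You instead observe that, since $\Gamma$ and $\widetilde\Gamma$ span $W(-k^2,\mathscr{A}^2)=\mathbf{v}^\perp$, the equation $\mathcal{Q}=0$ is equivalent to $(\rho_1,\mathcal{R},\rho_2)\in\R\mathbf{v}$, and you then use the standard realizability criterion for a triple $(\rho_1,\mathcal{R},\rho_2)$ (namely $\rho_1,\rho_2\ge 0$ and $\mathcal{R}^2\le 4\rho_1\rho_2$) together with the paper's own characterization $v_2^2-4v_1v_3\le 0$ of the complementary case. This is shorter and avoids the change-of-variables bookkeeping entirely, at the cost of invoking the realizability criterion (which is easy: take $A_1=\sqrt{tv_1}$, $A_2=\sqrt{tv_3}\,e^{i\theta}$ and adjust $\theta$) and the cited characterization from \cite[Theorem 3.1]{MSU1}. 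Both arguments are correct; yours is more conceptual and self-contained given that remark, while the paper's is more explicit about the normal forms and stays within the machinery of Proposition \ref{P:change}. The small edge cases in your argument (one of $v_1,v_3$ equal to zero; equality $v_2^2=4v_1v_3$; the impossibility of $v_1=v_3=0$) all resolve the way you indicate, so there is no gap.
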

\begin{proof}
Let us first prove that the first assertion.
To this end, pick a nonzero vector ${\bf C} \in W(-k^2,\mathscr{A}^2)$ and define
$
	\widetilde{{\bf C}}
	:= k^{-1}\mathscr{A}
	{\bf C}
\in W(-k^2,\mathscr{A}^2)$
and the corresponding quartic quantity
\[
	\tilde{\mathcal{Q}}:= (\begin{bmatrix}
	\rho_1 & \mathcal{R} & \rho_2
	\end{bmatrix}{\bf C})^2 +
(\begin{bmatrix}
	\rho_1 & \mathcal{R} & \rho_2
	\end{bmatrix}\widetilde{\bf C})^2.
\]
Since $\Gamma$ and
$\widetilde{\Gamma}$ form a basis of $W(-k^2,\mathscr{A}^2)$, there exists $\ell_1,\ell_2 \in \R$, $(\ell_1,\ell_2)\neq(0,0)$, such that
${\bf C} = \ell_1 \Gamma +  \ell_2 \widetilde{\Gamma}$.
Then, one has 
\[
\widetilde{\bf C} =
k^{-1}\mathscr{A}
	(\ell_1 \Gamma +  \ell_2 \widetilde{\Gamma})
	= \ell_2 (k^{-1}\mathscr{A}\widetilde{\Gamma}) + \ell_1
	(k^{-1}\mathscr{A}\Gamma)
= -\ell_2 \Gamma +  \ell_1 \widetilde{\Gamma}.
\]
Hence,
\begin{align*}
	\tilde{\mathcal{Q}}&= (\begin{bmatrix}
	\rho_1 & \mathcal{R} & \rho_2
	\end{bmatrix}(\ell_1 \Gamma +  \ell_2 \widetilde{\Gamma}))^2 +
(\begin{bmatrix}
	\rho_1 & \mathcal{R} & \rho_2
	\end{bmatrix}(-\ell_2 \Gamma +  \ell_1 \widetilde{\Gamma}))^2 \\
&= (\ell_1^2+\ell_2^2) ((\begin{bmatrix}
	\rho_1 & \mathcal{R} & \rho_2
	\end{bmatrix}\Gamma)^2
	+(\begin{bmatrix}
	\rho_1 & \mathcal{R} & \rho_2
	\end{bmatrix} \widetilde{\Gamma})^2)\\
&= (\ell_1^2+\ell_2^2)\mathcal{Q},
\end{align*}
which implies the desired conclusion.

We next prove the second assertion.
Pick $\mathscr{M} \in GL_2(\R)$ and let $\mathscr{A}'$ be the matrix given by \eqref{E:Aprime}.
Let $k'=|\det \mathscr{M}|^{-1} k>0$.
One immediately sees from \eqref{E:Aprime} that $- (k')^2<0$ is an eigenvalue of $(\mathscr{A}')^2$.
Further,
let
$\Gamma':=(\det \mathscr{M})^{-1}\mathscr{D}(\mathscr{M}) \Gamma$.
Then, 
\[
	(\mathscr{A}')^2 \Gamma'
	= (\det \mathscr{M})^{-3} \mathscr{D}(\mathscr{M}) \mathscr{A}^2 \Gamma
	= - (k')^2 \Gamma',
\]
which implies that $\Gamma'$ is an eigenvector of $(\mathscr{A}')^2$ associated with the eigenvalue
$- (k')^2$.
Further, one has the identity
\begin{equation}\label{E:pQpf1}
	\begin{bmatrix} |B_1|^2 & 2\Re (\overline{B_1}B_2) & |B_2|^2
	\end{bmatrix} = \det \mathscr{M}
	\begin{bmatrix} \rho_1 & \mathcal{R} & \rho_2
	\end{bmatrix}\mathscr{D}(\mathscr{M})^{-1}.
\end{equation}
This yields
\begin{equation}\label{E:pQpf2}
	\begin{bmatrix} |B_1|^2 & 2\Re (\overline{B_1}B_2) & |B_2|^2
	\end{bmatrix}\Gamma' = 
	\begin{bmatrix} \rho_1 & \mathcal{R} & \rho_2
	\end{bmatrix}\Gamma .
\end{equation}
Let us check the invariant property of $\mathcal{Q}$ for the present choice of $
\Gamma'$.
Let $\widetilde{\Gamma}':=
\frac1{k'}\mathscr{A}'\Gamma'$.
One deduces from \eqref{E:Aprime} and \eqref{E:pQpf1} that
\begin{align*}
	&\begin{bmatrix}|B_1|^2 &2\Re (\overline{B_1}B_2) & |B_2|^2\end{bmatrix} \widetilde{\Gamma}'\\
	&=(\det \mathscr{M} \begin{bmatrix} \rho_1 & \mathcal{R} & \rho_2
	\end{bmatrix} \mathscr{D}(\mathscr{M})^{-1} ) \tfrac{|\det \mathscr{M}|}{k}( (\det \mathscr{M})^{-1} \mathscr{D}(\mathscr{M})\mathscr{A}\mathscr{D}(\mathscr{M})^{-1})( (\det \mathscr{M} )^{-1}\mathscr{D}(\mathscr{M})\Gamma) \\
	&= \tfrac{\det \mathscr{M}}{|\det \mathscr{M}|} \begin{bmatrix} \rho_1 & \mathcal{R} & \rho_2
	\end{bmatrix}(k^{-1} \mathscr{A}\Gamma)\\
	&= \tfrac{\det \mathscr{M}}{|\det \mathscr{M}|} \begin{bmatrix} \rho_1 & \mathcal{R} & \rho_2
	\end{bmatrix}\widetilde{\Gamma}.
\end{align*}
Hence, together with \eqref{E:pQpf2}, one obtains
\[
	(\begin{bmatrix}|B_1|^2 &2\Re (\overline{B_1}B_2) & |B_2|^2\end{bmatrix}\Gamma')^2+(\begin{bmatrix}|B_1|^2 &2\Re (\overline{B_1}B_2) & |B_2|^2\end{bmatrix} \widetilde{\Gamma}')^2
	=\mathcal{Q}
\]
as desired. The result for other choice of $\Gamma' \in W(-( k')^2,(\mathscr{A}')^2)$ follows by combining this identity and
the result of the first assertion.

Let us proceed to the proof of the third assertion.
First suppose that $W(-k^2,\mathscr{A}^2)\cap \mathcal{P}_+ \neq \emptyset$.
Since $\mathcal{Q} \lesssim (|A_1|^2+|A_2|^2)^2$ is trivial from the fact that $\mathcal{Q}$ is a nonnegative homogeneous quartic polynomial, we prove the other direction, i.e.,
\[
	\mathcal{Q} \gtrsim (|A_1|^2+|A_2|^2)^2.
\]
Thanks to the first assertion, it suffices to show the equivalence relation for a specific choice of $\Gamma$.
We pick $\Gamma \in W(-k^2,\mathscr{A}^2) \cap \mathcal{P}_+$, which is possible because of the assumption.
Then, one has
\[
	|A_1|^2 + |A_2|^2 \lesssim
	|\begin{bmatrix}
	\rho_1 & \mathcal{R} & \rho_2
	\end{bmatrix}\Gamma| \lesssim \mathcal{Q}^{1/2}
\]
as desired.

Let us finally prove the existence of a nontrivial solution to the equation $\mathcal{Q}=0$ in the case $W(-k^2,\mathscr{A}^2) \cap \mathcal{P}_+= \emptyset$.
There are
two subscases; (a) $W(-k^2,\mathscr{A}^2) \cap \mathcal{P}_0\neq \{0\}$ and (b) $W(-k^2,\mathscr{A}^2) \cap \mathcal{P}_0= \{0\}$, where
\[
\mathcal{P}_0:=\{(a,b,c) \in \R^3 ; ac-b^2=0\}.
\]
Let us consider the subcase (a). 
Thanks to the second assertion, it suffices to prove the statement after applying suitable change of variables.
Pick nontrivial $\Gamma \in W(-k^2,\mathscr{A}^2)\cap \mathcal{P}_0$. 
Replacing $\Gamma$ with $-\Gamma$ if necessary, we suppose that $\gamma_1>0$ and $\gamma_3>0$ without loss of generality.
Note that
\[
	\begin{bmatrix}
	\rho_1 & \mathcal{R} & \rho_2
	\end{bmatrix}\Gamma
	= |\sqrt{\gamma_1 }A_1 + (\sign \gamma_2)\sqrt{\gamma_3}A_2|^2.
\]
We introduce a new variable by
\[
	\begin{bmatrix} B_1 \\ B_2 \end{bmatrix} = \begin{bmatrix} \sqrt{\gamma_1} & (\sign \gamma_2)\sqrt{\gamma_3} \\
0& 1/\sqrt{\gamma_1} \end{bmatrix}
\begin{bmatrix}
	A_1 \\ A_2
\end{bmatrix}
\]
and let $\mathscr{A}'$ be the matrix part of the system for $(B_1,B_2)$.
Then, one sees from \eqref{E:pQpf2} that
$\ltrans{(1,0,0)}\in W(-k^2,(\mathscr{A}')^2)$.
Let 
\[
	 \begin{bmatrix} \widetilde{\gamma}_1 \\ \widetilde{\gamma}_2 \\ \widetilde{\gamma}_3 \end{bmatrix}:=\mathscr{A}'\begin{bmatrix} 1 \\0\\0 \end{bmatrix} \in W(-k^2,(\mathscr{A}')^2)
	.
\]
We now claim that $\widetilde{\gamma}_3=0$. 
Indeed, otherwise
\[
	\tfrac{d}{dh}((0+h\widetilde{\gamma}_2)^2-(1+h\widetilde{\gamma}_1)(0+h\widetilde{\gamma}_3))|_{h=0} =- \widetilde{\gamma}_3 \neq0
\]
and
hence there exists $h_0$ such that
$\ltrans{(1,0,0)}+h_0\ltrans{(\widetilde{\gamma}_1,\widetilde{\gamma}_2.\widetilde{\gamma}_3)} \in W(-k^2,(\mathscr{A}')^2) \cap \mathcal{P}_+$, a contradiction.
Thus, one sees that $(B_1,B_2)=(0,1)$ is a solution to 
\[
	\mathcal{Q} \equiv |B_1|^4 + (\widetilde{\gamma}_1 |B_1|^2 +2\widetilde{\gamma}_2\Re (\overline{B_1} B_2) )^2
=0.
\]

Let us proceed to the proof in the subcase (b). 
Pick $\Gamma=\ltrans{(\gamma_1,\gamma_2,\gamma_3)} \in W(-k^2,\mathscr{A}^2)$.
By assumption, we have $\gamma_2^2-\gamma_1\gamma_3>0$.
We apply the following change of variable to make $\gamma_1=\gamma_3=0$.
Suppose $\gamma_1^2+\gamma_3^2>0$, otherwise there is nothing to be done. 
Swapping variables if necessary, one may assume that
 $\gamma_1\neq0$ without loss of generality.
Further, by replacing $\Gamma$ with $-\Gamma$ if necessary, we suppose that $\gamma_1>0$.
In this case, one has
\[
	\begin{bmatrix}
	\rho_1 & \mathcal{R} & \rho_2
	\end{bmatrix}\Gamma
	=
	\tfrac1{\gamma_1} \Re \overline{( \gamma_1 A_1 +(\gamma_2-(\gamma_2^2-\gamma_1\gamma_3)^{1/2})A_2)}( \gamma_1 A_1 +(\gamma_2+(\gamma_2^2-\gamma_1\gamma_3)^{1/2})A_2).
\]
According to the identity,
we introduce the new variable $(B_1,B_2)$  as
\[
	\begin{bmatrix} B_1 \\ B_2 \end{bmatrix} = 
	\begin{bmatrix}\gamma_1 &\gamma_2- (\sign \gamma_2)(\gamma_2^2-\gamma_1\gamma_3)^{1/2} \\
	\gamma_1 &\gamma_2+ (\sign \gamma_2) (\gamma_2^2-\gamma_1\gamma_3)^{1/2}
	\end{bmatrix}
	\begin{bmatrix} A_1 \\ A_2 \end{bmatrix}.
\]
Let $\mathscr{A}'$ be the matrix part of the system for $(B_1,B_2)$.
Then, one sees from the above  idenity and \eqref{E:pQpf2} that
$\ltrans{(0,1,0)}\in W(-(k')^2,(\mathscr{A}')^2)$,
where $k'=\gamma_1^{-1}(|\gamma_2|+(\gamma_2^2-\gamma_1\gamma_3)^{1/2})^{-1}k$.
Hence, we have obtained the desired property $\gamma_1=\gamma_3=0$.
Let 
\[
	 \begin{bmatrix} \widetilde{\gamma}_1 \\ \widetilde{\gamma}_2 \\ \widetilde{\gamma}_3 \end{bmatrix}:=\mathscr{A}'\begin{bmatrix} 0 \\1\\0 \end{bmatrix} \in W(-(k')^2,(\mathscr{A}')^2)
	.
\]
We claim that $\widetilde{\gamma}_1\widetilde{\gamma}_3<0$.
Indeed, if $\widetilde{\gamma}_1\widetilde{\gamma}_3\ge0$ then
one has
\[
	\begin{bmatrix} \widetilde{\gamma}_1\\
0\\\widetilde{\gamma}_3
	\end{bmatrix}	
	= 	\begin{bmatrix} \widetilde{\gamma}_1\\
\widetilde{\gamma}_2\\\widetilde{\gamma}_3
	\end{bmatrix}	
	-\widetilde{\gamma}_2 	\begin{bmatrix} 
	0\\1\\0
	\end{bmatrix}	
	\in W(-(k')^2,(\mathscr{A}')^2) \cap (\mathcal{P}_+\cup \mathcal{P}_0),
\]
a contradiction.
Thus, we see that $(B_1,B_2)=(|\widetilde{\gamma_3}|^{1/2},i|\widetilde{\gamma_1}|^{1/2})$ is a solution to
\[
	\mathcal{Q} \equiv (2\Re( \overline{B}_1 B_2))^2 + (\widetilde{\gamma}_1 |B_1|^2 +2\widetilde{\gamma}_2\Re \overline({B}_1 B_2) + \widetilde{\gamma}_3 |B_2|^2)^2
=0.
\]
The proof is completed.
\end{proof}

\section{Reduction to the standard form}\label{S:ps}
\subsection{The equivalence of (H0) and (S0)}\label{S:nullcond}

Here, we confirm that two condtioins (H0) and (S0) are equivalent for \eqref{E:generalNLS}.
Note that ``(S0)$\Rightarrow$(H0)'' is obvious since every real-symmetric matrix is a Hermitean matrix. We prove the opposite.
For this purpose, we introudce the following lemma. This is essentially \cite{MSU2}*{Lemma A.7}.

\begin{lemma}\label{L:quadidentity}
Let $(A_1,A_2) \in \C^2$ be a pair of complex numbers. Let
\begin{equation*}
	\rho_1 = |A_1|^2 , \quad \rho_2 = |A_2|^2, \quad \mathcal{R} = 2\Re (\overline{A_1} A_2), \quad \mathcal{I} = 2 \Im (\overline{A_1} A_2).
\end{equation*}
Then, for any $(a,b,c) \in \R^3$, one has
\begin{equation}\label{E:qqq1}
 	\Im \(\begin{bmatrix} 
	\overline{A_1} & \overline{A_2} 
	\end{bmatrix}
	\begin{bmatrix}
	a & b  \\ b  & c
	\end{bmatrix}
	\begin{bmatrix}
	F_1 (A_1,A_2) \\
	F_2 (A_1,A_2)
	\end{bmatrix}
	\)	
	= \mathcal{I} \begin{bmatrix} \rho_1 & \mathcal{R} & \rho_2  \end{bmatrix}
	\mathscr{A} \begin{bmatrix} a \\ b \\ c \end{bmatrix}.
\end{equation}
Further,
\begin{equation}\label{E:qqq2}
\begin{aligned}
 	\Im \(\begin{bmatrix} 
	\overline{A_1} & \overline{A_2} 
	\end{bmatrix}
	\begin{bmatrix}
	0 & i  \\ -i  & 0
	\end{bmatrix}
	\begin{bmatrix}
	F_1 (A_1,A_2) \\
	F_2 (A_1,A_2)
	\end{bmatrix}
	\)
= \tfrac12 \begin{bmatrix} \rho_1 & \mathcal{R} & \rho_2  \end{bmatrix} 
\mathscr{B}
\begin{bmatrix} \rho_1 \\ \mathcal{R} \\ \rho_2  \end{bmatrix} ,
\end{aligned}
\end{equation}
where $\mathscr{A}$ is given as in \eqref{E:matrixC} from $(F_1,F_2)$ and 
\begin{equation}\label{E:Bform}
	\mathscr{B} =\begin{bmatrix}
	-4\lambda_7 
	&
	\lambda_1 - \lambda_8 - \lambda_9 
	&
	2 (\lambda_2-\lambda_3-\lambda_{10}+\lambda_{11}) 
	\\
	\lambda_1 - \lambda_8 - \lambda_9 
	& 
	2(\lambda_3 - \lambda_{11})
	&
	\lambda_4+\lambda_5 - \lambda_{12} 
	\\
	2 (\lambda_2-\lambda_3-\lambda_{10}+\lambda_{11}) 
	& 
	\lambda_4+\lambda_5 - \lambda_{12} 
	&
	4 \lambda_6
	\end{bmatrix}.
\end{equation}
\end{lemma}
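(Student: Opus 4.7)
Both identities will be verified by direct expansion of the cubic monomials in $F_1,F_2$ and matching of coefficients. The underlying observation is that, writing $z=\overline{A_1}A_2$, one has $\Re z = \mathcal{R}/2$, $\Im z = \mathcal{I}/2$, and $|z|^2 = \rho_1\rho_2$, so every product $\overline{A_m}\cdot(\text{cubic monomial})$, projected to its real or imaginary part, becomes a polynomial in $\rho_1,\rho_2,\mathcal{R},\mathcal{I}$.

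For the first identity, bilinearity in $(a,b,c)$ reduces the claim to the three basis cases $(1,0,0)$, $(0,1,0)$, $(0,0,1)$; the left-hand side then becomes respectively $\Im(\overline{A_1}F_1)$, $\Im(\overline{A_1}F_2+\overline{A_2}F_1)$, and $\Im(\overline{A_2}F_2)$. For each of the six cubic monomials in $F_1,F_2$, the expansion of $\Im(\overline{A_m}\cdot\text{monomial})$ yields either $0$ (for gauge-preserving monomials such as $|A_j|^2 A_m$) or a constant multiple of $\rho_j\mathcal{I}$ or $\mathcal{R}\mathcal{I}$, via $\Im(|A_j|^2 w) = |A_j|^2\Im w$ and $\Im(z^2) = 2\Re(z)\Im(z)$ applied to $z=\overline{A_m}A_{m'}$. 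Collecting coefficients in each of the three cases, one reads off precisely the first, second, and third columns of $\mathscr{A}$ in \eqref{E:matrixC}.

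For the second identity, first observe that
\[
\Im\!\left(\begin{bmatrix}\overline{A_1}&\overline{A_2}\end{bmatrix}\begin{bmatrix}0 & i\\ -i & 0\end{bmatrix}\begin{bmatrix}F_1\\F_2\end{bmatrix}\right) = \Re(\overline{A_1}F_2) - \Re(\overline{A_2}F_1),
\]
and expand term by term. Each monomial now contributes a multiple of $\rho_j^2$, $\rho_j\rho_k$, $\rho_j\mathcal{R}$, or $\Re((\overline{A_1}A_2)^2)$; the last is reduced to the other variables via
\[
\Re((\overline{A_1}A_2)^2) = \tfrac{1}{4}\mathcal{R}^2 - \tfrac{1}{4}\mathcal{I}^2 = \tfrac{1}{2}\mathcal{R}^2 - \rho_1\rho_2,
\]
using the Pythagorean relation $\mathcal{R}^2+\mathcal{I}^2 = 4|\overline{A_1}A_2|^2 = 4\rho_1\rho_2$ to eliminate $\mathcal{I}^2$. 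Assembling the coefficients of the six degree-two monomials $\rho_1^2,\rho_1\mathcal{R},\rho_1\rho_2,\mathcal{R}^2,\mathcal{R}\rho_2,\rho_2^2$ reveals exactly the entries of $\mathscr{B}$ from \eqref{E:Bform}.

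\emph{Main difficulty.} The verification is mechanical; the only step beyond the definitions is the Pythagorean relation used in the second identity, which is essential so that the quadratic right-hand side is expressible as a polynomial in $\rho_1,\mathcal{R},\rho_2$ alone with no residual $\mathcal{I}^2$, and which is what generates the $\rho_1\rho_2$ entries of $\mathscr{B}$. The principal burden is thus careful tracking of signs and indices across the twelve elementary expansions.
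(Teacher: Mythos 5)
Your strategy---direct expansion of each cubic monomial in $F_1,F_2$, followed by collecting coefficients in $\rho_1,\mathcal{R},\rho_2,\mathcal{I}$, using $\Re z=\mathcal{R}/2$, $\Im z=\mathcal{I}/2$ for $z=\overline{A_1}A_2$, $\Im(z^2)=2\Re z\,\Im z$, and eliminating $\mathcal{I}^2$ by $\mathcal{R}^2+\mathcal{I}^2=4\rho_1\rho_2$---is the right one, and is the only natural route. The paper itself gives no proof (it cites \cite{MSU2} for this statement), so there is no internal argument to compare against; your decomposition into the three basis cases $(1,0,0),(0,1,0),(0,0,1)$ for the first identity and the reduction of the second to $\Re(\overline{A_1}F_2)-\Re(\overline{A_2}F_1)$ are both exactly how one would organize the verification.

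However, the assertion that the expansion ``reads off precisely the first, second, and third columns of $\mathscr{A}$'' and ``reveals exactly the entries of $\mathscr{B}$'' does not survive a careful execution of your own recipe, and this is a gap: the step where you match coefficients is precisely where one must work, and you declare victory without doing it. For $(a,b,c)=(1,0,0)$ one finds, with your substitutions,
\[
  \Im\bigl(\overline{A_1}F_1\bigr)
  = \tfrac{\mathcal{I}}{2}\bigl[(\lambda_2-\lambda_3)\rho_1+\lambda_5\mathcal{R}+\lambda_6\rho_2\bigr]
  = \tfrac12\,\mathcal{I}\begin{bmatrix}\rho_1&\mathcal{R}&\rho_2\end{bmatrix}\mathscr{A}\begin{bmatrix}1\\0\\0\end{bmatrix},
\]
which is half the right-hand side of \eqref{E:qqq1}. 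This factor of $2$ is forced by consistency with Proposition~\ref{P:quadODE}: one has $\rho_1'=2\Im(\overline{A_1}F_1)$ under $iA_1'=F_1$, so \eqref{E:gQQQ1} and \eqref{E:qqq1} cannot both hold at face value with the definitions $\mathcal{R}=2\Re(\overline{A_1}A_2)$, $\mathcal{I}=2\Im(\overline{A_1}A_2)$ stated in the lemma. The same accounting on the second identity gives $\Re(\overline{A_1}F_2)-\Re(\overline{A_2}F_1)=-\tfrac14\begin{bmatrix}\rho_1&\mathcal{R}&\rho_2\end{bmatrix}\mathscr{B}\begin{bmatrix}\rho_1\\\mathcal{R}\\\rho_2\end{bmatrix}$ rather than $\tfrac12(\cdots)$. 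These normalization constants are immaterial in the downstream use (Corollary~\ref{C:H0S0} only needs $\mathscr{B}=0$), but your proposal, read as a check of \eqref{E:qqq1}--\eqref{E:qqq2} as printed, claims an exact match that a concrete run of your plan would have contradicted. Concretely: take $A_1=1$, $A_2=i$, $F_1=|A_1|^2A_2$, $F_2=0$; then the left side of \eqref{E:qqq1} at $(1,0,0)$ is $1$ while the right side is $2$.
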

\begin{remark}
The matrix $\mathscr{B}$ depends only on the matrix part $\mathscr{A}=\{a_{ij}\}_{1 \le i,j, \le 3}$ of the matrix-vector representation.
Indeed, it is written as
\[
	\mathscr{B} =\begin{bmatrix}
	4 a_{13}
	&
	-a_{12}+2a_{23} 
	&
	2a_{11}+2a_{33}  
	\\
	-a_{12}+2a_{23} 
	& 
	-2 a_{22} 
	&
	-a_{32} + 2a_{21} 
	\\
	2a_{11}+2a_{33} 
	& 
	-a_{32} + 2a_{21} 
	&
	4 a_{31} 
	\end{bmatrix}.
\]
We remark that there is no contribution from the vector part $\mathscr{V}$ in the left hand side of \eqref{E:qqq1} and \eqref{E:qqq2}.
\end{remark}

\begin{corollary}\label{C:H0S0}
Let $(F_1,F_2)$ be as in
\eqref{E:nonlinearity}.
Suppose that there exists
a Hermitian matrix
\[
	\mathscr{H} = \begin{bmatrix}
	p & q_1 + i q_2 \\ q_1 - i q_2 & r
	\end{bmatrix}, \quad p,q_1,q_2,r \in \R
\]
such that
\[
	\Im \(\begin{bmatrix} 
	\overline{A_1} & \overline{A_2} 
	\end{bmatrix}
	\mathscr{H}
	\begin{bmatrix}
	F_1 (A_1,A_2) \\
	F_2 (A_1,A_2)
	\end{bmatrix}
	\)
	=0
\]
holds for all $(A_1,A_2) \in \C^2$.
Then, it holds that
\[
	\Im \(\begin{bmatrix} 
	\overline{A_1} & \overline{A_2} 
	\end{bmatrix}
	\begin{bmatrix}
	p & q_1  \\ q_1  & r
	\end{bmatrix}
	\begin{bmatrix}
	F_1 (A_1,A_2) \\
	F_2 (A_1,A_2)
	\end{bmatrix}
	\)
	=0
\]
for all $(A_1,A_2) \in \C^2$.
In particular, condition (H0) implies
(S0).
\end{corollary}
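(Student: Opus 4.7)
The plan is to split $\mathscr{H}$ into its real-symmetric and purely imaginary skew parts, apply the two identities of Lemma~\ref{L:quadidentity} separately, and then extract the conclusion by a parity substitution in the data. Writing
\[
\mathscr{H}=\mathscr{S}+q_2\begin{bmatrix} 0 & i \\ -i & 0 \end{bmatrix},\qquad \mathscr{S}:=\begin{bmatrix} p & q_1 \\ q_1 & r \end{bmatrix},
\]
and invoking \eqref{E:qqq1} with $(a,b,c)=(p,q_1,r)$ for the first summand together with \eqref{E:qqq2} scaled by $q_2$ for the second, the hypothesis (H0) translates into
\[
\mathcal{I}\,L + \tfrac{q_2}{2}\,Q = 0 \qquad \text{for all } (A_1,A_2)\in\C^2,
\]
where $L:=\begin{bmatrix}\rho_1 & \mathcal{R} & \rho_2\end{bmatrix}\mathscr{A}\begin{bmatrix} p \\ q_1 \\ r \end{bmatrix}$ and $Q:=\begin{bmatrix}\rho_1 & \mathcal{R} & \rho_2\end{bmatrix}\mathscr{B}\begin{bmatrix} \rho_1 \\ \mathcal{R} \\ \rho_2 \end{bmatrix}$ depend only on $(\rho_1,\mathcal{R},\rho_2)$.

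The key step will be the substitution $(A_1,A_2)\mapsto(\overline{A_1},\overline{A_2})$. Since $F_1,F_2$ have real coefficients one has $F_j(\overline{A_1},\overline{A_2})=\overline{F_j(A_1,A_2)}$; moreover, under the substitution the quantities $\rho_1,\rho_2,\mathcal{R}$ are invariant while $\mathcal{I}$ reverses sign. Hence evaluating (H0) at the conjugated data and using Lemma~\ref{L:quadidentity} to express the imaginary part in terms of the transformed invariants yields
\[
-\mathcal{I}\,L + \tfrac{q_2}{2}\,Q = 0
\]
with the same $L$, $Q$, $\mathcal{I}$ as above. Subtracting this from the original identity produces $\mathcal{I}\,L=0$ for every $(A_1,A_2)\in\C^2$, which by \eqref{E:qqq1} applied with $(a,b,c)=(p,q_1,r)$ is precisely
\[
\Im\!\left(\begin{bmatrix}\overline{A_1} & \overline{A_2}\end{bmatrix}\mathscr{S}\begin{bmatrix} F_1(A_1,A_2) \\ F_2(A_1,A_2)\end{bmatrix}\right)=0,
\]
the desired conclusion for the symmetric part.

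For the ``in particular'' assertion (H0)$\Rightarrow$(S0), it remains to verify that $\mathscr{S}$ inherits positivity from $\mathscr{H}$: the diagonal entries of a positive Hermitian matrix are positive, so $p,r>0$, and positivity of $\det\mathscr{H}=pr-q_1^2-q_2^2$ forces $pr-q_1^2>0$, whence $\mathscr{S}$ is a positive real-symmetric matrix. The only genuinely non-routine ingredient in the argument is the parity substitution, which cleanly decouples the contribution odd in $\mathcal{I}$ (coming from $\mathscr{S}$) from the contribution even in $\mathcal{I}$ (coming from $q_2$); everything else is a direct reading of Lemma~\ref{L:quadidentity}.
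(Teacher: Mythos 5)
Your argument is correct, and it takes a genuinely different route from the paper's, even though both proofs are grounded in Lemma~\ref{L:quadidentity}. The paper fixes $q_2\neq 0$ (noting the conclusion is trivial otherwise), plugs the families $(A_1,A_2)=(1,\tau)$ and $(A_1,A_2)=(1,i\tau)$, $\tau\in\R$, into the identity $\mathcal{I}\,L+\tfrac{q_2}{2}\,Q=0$, and compares coefficients of the resulting polynomials in $\tau$ to force $\mathscr{B}=0$; the conclusion $\mathcal{I}\,L=0$ is then immediate from \eqref{E:qqq1}. You instead exploit the symmetry under $(A_1,A_2)\mapsto(\overline{A_1},\overline{A_2})$, under which $\rho_1,\mathcal{R},\rho_2$ are invariant and $\mathcal{I}$ flips sign, so that $L$ and $Q$ are unchanged while the (H0) identity becomes $-\mathcal{I}\,L+\tfrac{q_2}{2}\,Q=0$; subtracting kills the $Q$ term and directly yields $\mathcal{I}\,L=0$. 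Your parity argument is shorter, avoids the case split on $q_2$, and avoids computing the entries of $\mathscr{B}$; the trade-off is that the paper's approach yields the extra structural by-product $\mathscr{B}=0$ (whenever the skew part is nonzero), which is a fact about the system rather than about $\mathscr{H}$ alone, whereas your argument produces only the stated conclusion. Your verification that $\mathscr{S}$ inherits positivity from $\mathscr{H}$ (via $p,r>0$ and $\det\mathscr{H}=pr-q_1^2-q_2^2>0\Rightarrow pr-q_1^2>0$) correctly completes the ``in particular'' part, which the paper leaves implicit.
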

\begin{proof}
We suppose that $q_2\neq0$, otherwise the conclusion is obvious.
Pick $(A_1,A_2) \in \C^2$ and define $\rho_1$, $\rho_2$, $\mathcal{R}$, and $\mathcal{I}$ as in \eqref{E:quaddef}.
Utilizing \eqref{E:qqq1} and \eqref{E:qqq2}, one has
\[
	0=	\Im \(\begin{bmatrix} 
	\overline{A_1} & \overline{A_2} 
	\end{bmatrix}
	\mathscr{H}
	\begin{bmatrix}
	F_1 (A_1,A_2) \\
	F_2 (A_1,A_2)
	\end{bmatrix}
	\)
	= \mathcal{I} \begin{bmatrix} \rho_1 & \mathcal{R} & \rho_2  \end{bmatrix}
	\mathscr{A} \begin{bmatrix} p \\ q_1 \\ r \end{bmatrix}
	+\tfrac{q_2}2 \begin{bmatrix} \rho_1 & \mathcal{R} & \rho_2  \end{bmatrix} 
\mathscr{B}
\begin{bmatrix} \rho_1 \\ \mathcal{R} \\ \rho_2  \end{bmatrix}.
\]

We will prove that $\mathscr{B}=0$. Then, we obtain the desired conclusion from \eqref{E:qqq1}.
Let $b_{ij}$ is the $(i,j)$-entry of $\mathscr{B}$.
We substitute $(A_1,A_2)=(1,\tau)$ with $\tau\in \R$. Then, we obtain an idenity between two polynomials in $\tau$.
Since
$(\rho_1,\mathcal{R},\mathcal{I},\rho_2)=(1,2\tau,0,\tau^2)$, one sees  from the comparison of the coefficients that $b_{11} = b_{12}  = b_{23} = b_{33}=0$
and $b_{13} + 2b_{22} =0$.
Next, we do a similar test with the choice
$(A_1,A_2)=(1,i\tau)$ with $\tau\in \R$.
Then, noting that $(\rho_1,\mathcal{R},\mathcal{I},\rho_2)=(1,0,2\tau,\tau^2)$, one further obtains $b_{13}=0$ from the comparison of the two polynomials. Thus, one has $\mathscr{B}=0$.
\end{proof}

At the end of this section, let us observe that \eqref{E:NLS1} does not satisfies the condtion (D0).
Recalling its matrix-vector representaion and \eqref{E:qqq1} and \eqref{E:qqq2}, one has
\[
		\Im \(\begin{bmatrix} 
	\overline{A_1} & \overline{A_2} 
	\end{bmatrix}
	\begin{bmatrix}
	p & q_1 + i q_2 \\ q_1 - i q_2 & r
	\end{bmatrix}
	\begin{bmatrix}
	F_1 (A_1,A_2) \\
	F_2 (A_1,A_2)
	\end{bmatrix}
	\) = -r \rho_1 \mathcal{I} + p \rho_2 \mathcal{I} -2 q_2 \rho_1^2 + 2 q_2 \rho_2^2.
\]
Suppose that this quantity is nonpositive for all $(A_1,A_2)\in \C^2$, for some $p,q_1,q_2,r \in \R$ with $p,r>0$ and $pr>q_1^2+q_2^2$.
One first obtain $q_2=0$ by substition of $A_1=0$ or $A_2=0$. 
Then, it is easy to see that this quantity cannot be nonpositive for all $(A_1,A_2)\in \C^2$. This can be verified, for instance,
by testing the value when $(A_1,A_2)=(1,\pm i \tau)$
with small or large $\tau\in\R$.

\subsection{Reduction to the standard form}\label{S:std}
In this section, we discuss the procedure of the reduction.

\begin{proof}[Proof of Theorem \ref{T:std}]
Suppose $W(-k^2,\mathscr{A}^2) \cap \mathcal{P}_+ \neq \emptyset$.
We apply several changes of variables to reduce $\mathscr{A}$ to the desired form.

First, we introduce the variable $(u_1^1,u_2^1):=(\sqrt{k} u_1, \sqrt{k} u_2 )$. Then the corresponding matrix  becomes
$\mathscr{A}_1=k^{-1} \mathscr{A}$, thanks to \eqref{E:Aprime}.
Hence, 
$(\mathscr{A}_1)^2$ has the eigenvalue $-1$.
One sees that $W(-1,(\mathscr{A}_1)^2)$
has an intersection with $\mathcal{P}_0$.
Hence, swapping the variables of the system if necessary,
there exists $\beta \in \R$ such that
$\ltrans{(1,\beta,\beta^2)} \in W(-1,(\mathscr{A}_1)^2)$.
We introduce a new variable $(u_1^2,u^2_2)$ by
\[
	\begin{bmatrix} u_1^2 \\ u_2^2 \end{bmatrix}
	= \begin{bmatrix} 1 & \beta \\ 0 & 1 \end{bmatrix}
	\begin{bmatrix} u_1^1 \\ u_2^1 \end{bmatrix}.
\]
Then, one sees from \eqref{E:Aprime} that
the matrix $\mathscr{A}_2$ corresponding to the transformed system
satisfies
$\ltrans{(1,0,0)} \in W(-1,(\mathscr{A}_2)^2)$.
Let 
\[
	 \begin{bmatrix} \widetilde{\gamma}_1 \\ \widetilde{\gamma}_2 \\ \widetilde{\gamma}_3 \end{bmatrix}:=\mathscr{A}_2\begin{bmatrix} 1 \\0\\0 \end{bmatrix} \in W(-1,(\mathscr{A}_2)^2)
	.
\]
Note that $W(-1,(\mathscr{A}_2)^2)$ is spanned by
$\ltrans{(1,0,0)}$ and $\ltrans{(\widetilde{\gamma}_1,\widetilde{\gamma}_2,\widetilde{\gamma}_3)}$.
One then sees that $\widetilde{\gamma}_3\neq0$
since otherwise $W(-1,(\mathscr{A}_2)^2)$ does not have an intersection with $\mathcal{P}_+$. According to the identity
\[
	\widetilde{\gamma}_1 |u_1^2|^2 + 2 \widetilde{\gamma}_2 \Re (\overline{u_1^2}u_2^2) + \widetilde{\gamma}_3 |u_2^2|^2 =
r^{-2} (\widetilde{\gamma}_1-\tfrac{\widetilde{\gamma}_2^2}{\widetilde{\gamma}_3})|r u_1^2|^2 +r^2 \widetilde{\gamma}_3 |  \tfrac{\widetilde{\gamma}_2}{\widetilde{\gamma}_3r} u_1^2+r^{-1} u_2^2|^2,
\]
where $r>0$ is a number to be chosen later,
we introduce a new variable $(u_1^3,u^3_2)$ by
\[
	\begin{bmatrix} u_1^3 \\ u_2^3 \end{bmatrix}
	= \begin{bmatrix} r & 0 \\ \frac{\widetilde{\gamma}_2}{\widetilde{\gamma}_3r} & r^{-1} \end{bmatrix}
	\begin{bmatrix} u_1^2 \\ u_2^2 \end{bmatrix}
\]
and let $\mathscr{A}_3$ be the matrix corresponding to the system for $(u_1^3,u^3_2)$.
It holds from
\[
	(\mathscr{A}_3)^n = \begin{bmatrix} r^{-2} & -2\frac{\widetilde{\gamma}_2}{\widetilde{\gamma}_3r^2}  & \frac{\widetilde{\gamma}_2^2}{\widetilde{\gamma}_3^2r^2} \\ 0 & 1 & -\frac{\widetilde{\gamma}_2}{\widetilde{\gamma}_3} \\ 0 &0 & r^2 \end{bmatrix} (\mathscr{A}_2)^n 
\begin{bmatrix} r^2 & 2\frac{\widetilde{\gamma}_2}{\widetilde{\gamma}_3} & \frac{\widetilde{\gamma}_2^2}{\widetilde{\gamma}_3^2r^2} \\ 0 & 1 & \frac{\widetilde{\gamma}_2}{\widetilde{\gamma}_3r^2} \\ 0 & 0 & r^{-2} \end{bmatrix}
\]
for $n=1,2$ that
\[
	(\mathscr{A}_3)^2\begin{bmatrix} 1 \\0\\0 \end{bmatrix}
	=- \begin{bmatrix} 1 \\0\\0 \end{bmatrix},\qquad
	\mathscr{A}_3\begin{bmatrix} 1 \\0\\0 \end{bmatrix}
	= \begin{bmatrix} \widetilde{\gamma}_1-\tfrac{\widetilde{\gamma}_2^2}{\widetilde{\gamma}_3} \\ 0 \\r^4 \widetilde{\gamma}_3 \end{bmatrix}.
\]
We introduce $\eta_1 = \sinh^{-1} (\widetilde{\gamma}_1-\tfrac{\widetilde{\gamma}_2^2}{\widetilde{\gamma}_3})\in \R$ and choose
$r$ so that
	$r^4\widetilde{\gamma}_3 =  \sigma \cosh \eta_1$,
where $\sigma\in\{\pm1\}$.
Let $\lambda_0$ be the real eigenvalue of $\mathscr{A}_3$ and let $\eta_2,\eta_3 \in \R$ be the number such that
$\ltrans{(\eta_2,1,\eta_3)}\in W(\lambda_0,\mathscr{A}_3)$. Note that we can pick such a vector since $W(-1,(\mathscr{A}_3)^2)=\{(x,0,z) \in \R^3 ; x,z\in\R\}=(\ltrans{(0,1,0)})^\perp$ and $W(-1,(\mathscr{A}_3)^2) \oplus W(\lambda_0,\mathscr{A}_3) = \R^3$. 
Thus,
\[
	\begin{bmatrix}
	1 &  \sinh \eta_1 & \eta_2 \\
	0 & 0 & 1 \\
	0 &  \sigma \cosh \eta_1 & \eta_3
	\end{bmatrix}^{-1}
	\mathscr{A}_3
	\begin{bmatrix}
	1 & \sinh \eta_1 & \eta_2 \\
	0 & 0 & 1 \\
	0 & \sigma \cosh \eta_1 & \eta_3
	\end{bmatrix}
	=
	\begin{bmatrix}
	0 & -1 & 0 \\
	1 & 0 & 0 \\
	0 & 0 & \lambda_0
	\end{bmatrix},
\]
which implies
\begin{equation}\label{E:stdAform}
	\mathscr{A}_3 =\begin{bmatrix}
	\sinh \eta_1 & -\eta_2 \sinh \eta_1 + \sigma \eta_3\cosh \eta_1 + \eta_2 \lambda_0 & -\sigma \cosh \eta_1 \\
	0 & \lambda_0 & 0 \\
	\sigma \cosh  \eta_1& -\eta_2 \sigma \cosh \eta_1+\eta_3 \sinh \eta_1 + \eta_3 \lambda_0 & - \sinh \eta_1
	\end{bmatrix}.
\end{equation}
Thus, we get a system of the desired form \eqref{E:stdAV}.

For a given pair $(\mathscr{A}=(a_{ij})_{1\le i,j \le 3}, \mathscr{V}=(q_k)_{1\le k \le 3} )\in M_3(\R) \times \R^3$, the system is constructed with the following formula (see \cite{MSU1}).
\begin{equation}\label{E:CVNLS}
\left\{
\begin{aligned}
	(i  \partial_t + \partial_x^2 )u_1={}&
-(a_{12}+a_{23}) |u_1|^2 u_1 +a_{11}(2|u_1|^2 u_2 + u_1^2\ol{u_2})+a_{21} (2 u_1 |u_2|^2 + \ol{u_1}u_2^2) \\&+ a_{31} |u_2|^2u_2
	 - (\tr{\mathscr{A}}) \Re (\overline{u_1} u_2) u_1 +\mathcal{V}(u_1,u_2) u_1, \\
	(i  \partial_t + \partial_x^2 )u_2={}&
-a_{13} |u_1|^2 u_1-a_{23}(2|u_1|^2 u_2 + u_1^2\ol{u_2})-a_{33} (2 u_1 |u_2|^2 + \ol{u_1}u_2^2) \\&+ (a_{21}+a_{32}) |u_2|^2u_2
	 + (\tr{\mathscr{A}}) \Re (\overline{u_1} u_2) u_2 + \mathcal{V}(u_1,u_2)u_2,
\end{aligned}
\right.
\end{equation}
where $\mathcal{V}(u_1,u_2)=q_1 |u_1|^2 + 2q_2 \Re (\overline{u_1}u_2)  + q_3 |u_2|^2$ is a real-valued quadratic potential.
Plugging \eqref{E:stdAform} to \eqref{E:CVNLS}, we obtain the system \eqref{E:stdNLS}.
\end{proof}

\appendix

\section{Comparison of the standard forms}\label{S:compare}

As mentioned in the introduction, the standard form \eqref{E:stdNLS} belongs to the class treated in \cite{M} when $\lambda_0=0$.
In this section, we see the correspondence between the standard form given there.
The argument of the proof of Theorem \ref{T:std} provides a concrete procedure of obtaining a system of the form \eqref{E:stdNLS} from a system satisfying Assumption \ref{A:main}.
Similarly, the argument in \cite{M} also give us a procedure to reduce a system into a standard form in \cite{M}.
Hence, here we concentrate on characterizing the standard form in \cite{M} which satisfies Assumption \ref{A:main}.
Note that the other direction, i.e., the characterization of systems of the form \eqref{E:stdNLS} which is treated in \cite{M} is simple as mentioned just above; $\lambda_0=0$.
To avoid inessential complexity, we concentrate on the matrix-part of the systems. 
\subsection{The case $\lambda_0=0$ and $\eta_2\eta_3>1$}
In this case, we have
$W(0,\mathscr{A}) \cap \mathcal{P}_+ \neq \emptyset$.
Hence, a system of the form \eqref{E:stdNLS} is reduced to an elliptic system in the terminology of \cite{M}.

Recall that the matrix-part of the standard form of an elliptic system is as follows (see \cite[Theorem 1.10]{M} and \cite{M3}):
\begin{equation}\label{E:estdMatform}
	\mathscr{A}=\begin{bmatrix}
	p_1+p_5 & -2p_2-2p_3-2p_4 & - p_1- p_5 \\
	p_{2}-p_3 & 2p_1& - p_{2}+p_3 \\
	-p_1+p_5 & 2p_{2}+2p_3-2p_4 & p_1 - p_5
	\end{bmatrix},
\end{equation}
where
\[
	(p_1,p_2,p_3,p_4,p_5) \in \cup_{1 \le j \le 4} \mathcal{T}_j
\]
with
\begin{align*}
	\mathcal{T}_1 &:= \{ (x_1,x_2,x_3,0,0) \in S^4\ ;\ x_1\ge0, x_3\ge0, x_3 \neq 2^{-1/2} \},\\
	\mathcal{T}_2 &:= \{ (x_1,x_2,0,x_4,0) \in S^4\ ;\ x_1\ge0, 0< x_4<1 \},\\
	\mathcal{T}_3 &:= \{ (x_1,x_2,x_3,x_4,x_5) \in S^4\ ;\ x_1\ge0, x_3>0, x_4\ge0, x_5\ge0, x_4^2+x_5^2>0\}\\
	&\qquad \setminus  \{ ( \sqrt{\tfrac{1-r^2}2}\sin 2\zeta, \sqrt{\tfrac{1-r^2}2}\cos 2\zeta, \sqrt{\tfrac{1-r^2}2} ,r \cos \zeta, r\sin \zeta) \in S^4; r \in (0,1) ,\zeta \in [0,\pi/2]\}, \\
	\mathcal{T}_4 &:= \{ (x_1,x_2,x_3,x_4,x_5) \in S^4\ ;\ x_1\ge0, x_2>0,x_3>0,x_4<0,x_5>0 \}.
\end{align*}

By a direct computation, one sees that $\mathscr{A}$ given in \eqref{E:estdMatform} satisfies $W(-k^2, \mathscr{A}^2) \neq \{0\}$ for some $k>0$ if and only if
\begin{equation}\label{E:Econd1}
	p_1 = 0 ,\quad p_2^2 >p_3^2.
\end{equation}
Note that $k= 2\sqrt{p_2^2-p_3^2}$ in this case.
Further, under this assumption, $W(-4(p_2^2-p^2_3), \mathscr{A}^2) \cap \mathcal{P}_+ \neq \emptyset $ if and only if
\begin{equation}\label{E:Econd2}
	(\tfrac{p_5}{p_2-p_3})^2 + 	(\tfrac{p_4}{p_2+p_3})^2 >1.
\end{equation}

Thus, one sees that
the combination $(p_1,p_2,p_3,p_4,p_5)$
satisfies Assumption \ref{A:main} if and only 
if \eqref{E:Econd1} and \eqref{E:Econd2} are valid, and such a combination
exists in $\mathcal{T}_2$, $\mathcal{T}_3$, and $\mathcal{T}_4$.
Note that the system given by a combination in
\[
	\{ (0,x_2,0,x_4,0) \in S^4\ ;\ x_4>|x_2|>0 \}
\subset \mathcal{T}_2
\]
belongs to Case 8 of \cite{M3}, in which case we obtain explicit formula for solution to the corresponding ODE system.

\subsection{The case $\lambda_0=0$ and $\eta_2\eta_3=1$}
In this case, a system of the form \eqref{E:stdNLS} is reduced to a parabolic system in the terminology of \cite{M}.
Thanks to \cite[Theorem 1.11]{M},
the parabolic system which possesses a pure imaginary eigenvalue exists only in the set
$\mathcal{T}_{\mathrm{p}.8}$.
Hence, we suppose that
the matrix-part of a system in this class is written as
\begin{equation*}
\mathscr{A}=	\begin{bmatrix}
	0 & a_{12} & a_{13} \\
	0 & 0 & \sigma_2 \\
	0 & \sigma_1 & 0
	\end{bmatrix}
\end{equation*}
with
$a_{12} \in \R$, $a_{13} \ge0$, and $\sigma_1,\sigma_2 \in \{\pm 1\}$.
One verifies that a matrix $\mathscr{A}$ of this form satisfies $W(-k^2,\mathscr{A}^2) \neq \{0\}$ for some $k>0$ if and only if
$\sigma_1 \sigma_2 = -1$.
Note that $k=1$ in this case. Further, under this assumption,
$W(-1, \mathscr{A}^2) \cap \mathcal{P}_+ \neq \emptyset $ holds true if and only if
$a_{13}^2 - 4 \sigma_2 a_{12} > 0$.

Thus, we see that the following set gives us the complete list of the standard forms of parabolic systems satisfying Assumption \ref{A:main}:
\[
	\left\{
	\begin{bmatrix}
	0 & a_{12} & a_{13} \\
	0 & 0 & \sigma \\
	0 & -\sigma & 0
	\end{bmatrix} ; a_{12},a_{13} \in \R, \sigma \in \{\pm 1\}
,a_{13}^2 - 4 \sigma_2 a_{12} >0
	\right\} \subset \mathcal{T}_{\mathrm{p}.8}.
\]

\subsection{The case $\lambda_0=0$ and $\eta_2\eta_3<1$}
In this case, a system of the form \eqref{E:stdNLS} is reduced to a hyperbolic system in the terminology of \cite{M}.
Thanks to \cite[Theorem 1.12]{M},
the parabolic system which possesses a pure imaginary eigenvalue exists only in $\mathcal{T}_{\mathrm{h}.6}$.
Hence, we consider the matrix $\mathscr{A}$  of the form
\begin{equation*}
	\mathscr{A}=\begin{bmatrix}
	a_{11} & 0 & -1 \\
	a_{21} & 0 & a_{23} \\
	1 & 0 & a_{33}
	\end{bmatrix}
\end{equation*}
with
$[a_{11}>-a_{33}] \vee [[a_{11}=-a_{33}]\wedge [a_{21} \ge -a_{23}]]$
and $[a_{11}a_{33}\neq-1]\vee [a_{11}a_{23}\neq -a_{21}]$.
By a simple calculation,
a matrix $\mathscr{A} \in 	\mathcal{T}_{\mathrm{h}.6}$ satisfies $W(-k^2,\mathscr{A}^2) \neq \{0\}$ for some $k>0$ if and only if
\[
	a_{11}=-a_{33} \in (-1,1) \quad \text{and} \quad a_{21} \ge -a_{23}.
\]
Note that $k=\sqrt{1-a_{11}^2}$ in this case.
Further, under this assumption,
$W(-1+a_{11}^2, \mathscr{A}^2) \cap \mathcal{P}_+ \neq \emptyset $ holds true if and only if
\[
	(1-a_{11}^2)^2+ 4 (a_{21}a_{11}+a_{23})(a_{23}a_{11}+a_{21}) > 0.
\]

Thus, we see that the following set gives us the complete list of the standard forms of parabolic systems satisfying Assumption \ref{A:main}:
\[
	\left\{
\begin{bmatrix}
	a_{11} & 0 & -1 \\
	a_{21} & 0 & a_{23} \\
	1 & 0 & -a_{11}
	\end{bmatrix} ; 
	\begin{aligned}
	&a_{11} \in (-1,1),a_{21},a_{23} \in \R, a_{21}+ a_{23} \ge 0,\\
	&(1-a_{11}^2)^2+ 4 (a_{21}a_{11}+a_{23})(a_{23}a_{11}+a_{21}) > 0
	\end{aligned}
	\right\} \subset \mathcal{T}_{\mathrm{h}.6}.
\]

\subsection*{Acknowledgements} 
The author was supported by JSPS KAKENHI Grant Numbers JP23K20803, JP23K20805, and JP24K00529.

\begin{bibdiv}
\begin{biblist}

\bib{BFBook}{book}{
   author={Byrd, Paul F.},
   author={Friedman, Morris D.},
   title={Handbook of elliptic integrals for engineers and scientists},
   series={Die Grundlehren der mathematischen Wissenschaften, Band 67},
   note={Second edition, revised},
   publisher={Springer-Verlag, New York-Heidelberg},
   date={1971},
   pages={xvi+358},
   review={\MR{0277773}},
}

\bib{CazBook}{book}{
   author={Cazenave, Thierry},
   title={Semilinear Schr\"{o}dinger equations},
   series={Courant Lecture Notes in Mathematics},
   volume={10},
   publisher={New York University, Courant Institute of Mathematical
   Sciences, New York; American Mathematical Society, Providence, RI},
   date={2003},
   pages={xiv+323},
   isbn={0-8218-3399-5},
   review={\MR{2002047}},
   doi={10.1090/cln/010},
}

\bib{GO}{article}{
   author={Ginibre, J.},
   author={Ozawa, T.},
   title={Long range scattering for nonlinear Schr\"{o}dinger and Hartree
   equations in space dimension $n\geq 2$},
   journal={Comm. Math. Phys.},
   volume={151},
   date={1993},
   number={3},
   pages={619--645},
   issn={0010-3616},
   review={\MR{1207269}},
}

\bib{HN}{article}{
   author={Hayashi, Nakao},
   author={Naumkin, Pavel I.},
   title={Asymptotics for large time of solutions to the nonlinear
   Schr\"{o}dinger and Hartree equations},
   journal={Amer. J. Math.},
   volume={120},
   date={1998},
   number={2},
   pages={369--389},
   issn={0002-9327},
   review={\MR{1613646}},
}

\bib{HNS}{article}{
   author={Hayashi, Nakao},
   author={Naumkin, Pavel I.},
   author={Sunagawa, Hideaki},
   title={On the Schr\"{o}dinger equation with dissipative nonlinearities of
   derivative type},
   journal={SIAM J. Math. Anal.},
   volume={40},
   date={2008},
   number={1},
   pages={278--291},
   issn={0036-1410},
   review={\MR{2403321}},
   doi={10.1137/070689103},
}


\bib{IT}{article}{
   author={Ifrim, Mihaela},
   author={Tataru, Daniel},
   title={Global bounds for the cubic nonlinear Schr\"{o}dinger equation
   (NLS) in one space dimension},
   journal={Nonlinearity},
   volume={28},
   date={2015},
   number={8},
   pages={2661--2675},
   issn={0951-7715},
   review={\MR{3382579}},
   doi={10.1088/0951-7715/28/8/2661},
}

\bib{Ka}{misc}{
	author={Soichiro Katayama},
	title={Asymptotically free solutions and the null condition for nonlinear wave and Schr\"{o}dinger equations},
	status={preprint},
}

\bib{KMS}{article}{
   author={Katayama, Soichiro},
   author={Matoba, Toshiaki},
   author={Sunagawa, Hideaki},
   title={Semilinear hyperbolic systems violating the null condition},
   journal={Math. Ann.},
   volume={361},
   date={2015},
   number={1-2},
   pages={275--312},
   issn={0025-5831},
   review={\MR{3302621}},
   doi={10.1007/s00208-014-1071-1},
}

\bib{KS}{article}{
   author={Katayama, Soichiro},
   author={Sakoda, Daisuke},
   title={Asymptotic behavior for a class of derivative nonlinear
   Schr\"{o}dinger systems},
   journal={Partial Differ. Equ. Appl.},
   volume={1},
   date={2020},
   number={3},
   pages={Paper No. 12, 41},
   issn={2662-2963},
   review={\MR{4336288}},
   doi={10.1007/s42985-020-00012-4},
}

\bib{KaTsu}{article}{
   author={Katayama, Soichiro},
   author={Tsutsumi, Yoshio},
   title={Global existence of solutions for nonlinear Schr\"odinger
   equations in one space dimension},
   journal={Comm. Partial Differential Equations},
   volume={19},
   date={1994},
   number={11-12},
   pages={1971--1997},
   issn={0360-5302},
   review={\MR{1301179}},
   doi={10.1080/03605309408821079},
}

\bib{KP}{article}{
   author={Kato, Jun},
   author={Pusateri, Fabio},
   title={A new proof of long-range scattering for critical nonlinear
   Schr\"{o}dinger equations},
   journal={Differential Integral Equations},
   volume={24},
   date={2011},
   number={9-10},
   pages={923--940},
   issn={0893-4983},
   review={\MR{2850346}},
}

\bib{KaSu}{article}{
   author={Kawahara, Yuichiro},
   author={Sunagawa, Hideaki},
   title={Remarks on global behavior of solutions to nonlinear Schr\"odinger
   equations},
   journal={Proc. Japan Acad. Ser. A Math. Sci.},
   volume={82},
   date={2006},
   number={8},
   pages={117--122},
   issn={0386-2194},
   review={\MR{2279276}},
}

\bibitem{Kita}
	Naoyasu Kita, \emph{Existence of blowing-up solutions to some nonlinear
		{S}chr\"{o}dinger equations including nonlinear amplification with small
		initial data}, Preprint, available at OCAMI Preprint Series 2020, 20-2.

\bib{KMSU}{article}{
   author={Kita, Naoyasu},
   author={Masaki, Satoshi},
   author={Segata, Jun-ichi},
   author={Uriya, Kota},
   title={Polynomial deceleration for a system of cubic nonlinear
   Schr\"{o}dinger equations in one space dimension},
   journal={Nonlinear Anal.},
   volume={230},
   date={2023},
   pages={Paper No. 113216, 22},
   issn={0362-546X},
   review={\MR{4541419}},
   doi={10.1016/j.na.2023.113216},
}

\bib{KN}{article}{
   author={Kita, Naoyasu},
   author={Nakamura, Yoshihisa},
   title={Large time behavior of small solutions to multi-component
   nonlinear Schr\"{o}dinger equations related with spinor Bose-Einstein
   condensate},
   journal={Linear Nonlinear Anal.},
   volume={5},
   date={2019},
   number={1},
   pages={73--85},
   issn={2188-8159},
   review={\MR{4054437}},
}

\bib{KiSa}{article}{
   author={Kita, Naoyasu},
   author={Sato, Takuya},
   title={Optimal $L^2$-decay of solutions to a cubic dissipative nonlinear
   Schr\"{o}dinger equation},
   journal={Asymptot. Anal.},
   volume={129},
   date={2022},
   number={3-4},
   pages={505--517},
   issn={0921-7134},
   review={\MR{4491888}},
}

\bib{KiSa2}{article}{
   author={Kita, Naoyasu},
   author={Sato, Takuya},
   title={Optimal $L^2$-decay of solutions to the dissipative nonlinear
   Schr\"{o}dinger equation in higher space dimensions},
   journal={J. Differential Equations},
   volume={354},
   date={2023},
   pages={49--66},
   issn={0022-0396},
   review={\MR{4537691}},
   doi={10.1016/j.jde.2023.01.001},
}

%

\bib{LNSS1}{article}{
   author={Li, Chunhua},
   author={Nishii, Yoshinori},
   author={Sagawa, Yuji},
   author={Sunagawa, Hideaki},
   title={Large time asymptotics for a cubic nonlinear Schr\"{o}dinger
   system in one space dimension},
   journal={Funkcial. Ekvac.},
   volume={64},
   date={2021},
   number={3},
   pages={361--377},
   issn={0532-8721},
   review={\MR{4360614}},
}

\bib{LNSS2}{article}{
   author={Li, Chunhua},
   author={Nishii, Yoshinori},
   author={Sagawa, Yuji},
   author={Sunagawa, Hideaki},
   title={Large time asymptotics for a cubic nonlinear Schr\"{o}dinger
   system in one space dimension, II},
   journal={Tokyo J. Math.},
   volume={44},
   date={2021},
   number={2},
   pages={411--416},
   issn={0387-3870},
   review={\MR{4379734}},
   doi={10.3836/tjm/1502179340},
}

\bib{LNSS5}{misc}{
   author={Li, Chunhua},
   author={Nishii, Yoshinori},
   author={Sagawa, Yuji},
   author={Sunagawa, Hideaki},
	title={Recent advances on Schr\"{o}dinger equations with
dissipative nonlinearities},
	status={preprint}
}

\bib{LS}{article}{
   author={Li, Chunhua},
   author={Sunagawa, Hideaki},
   title={On Schr\"{o}dinger systems with cubic dissipative nonlinearities
   of derivative type},
   journal={Nonlinearity},
   volume={29},
   date={2016},
   number={5},
   pages={1537--1563},
   issn={0951-7715},
   review={\MR{3481342}},
   doi={10.1088/0951-7715/29/5/1537},
}

\bib{LS2}{article}{
   author={Li, Chunhua},
   author={Sunagawa, Hideaki},
   title={Corrigendum: On Schr\"{o}dinger systems with cubic dissipative
   nonlinearities of derivative type (2016 {\it Nonlinearity} 29 1537--63)
   [MR3481342]},
   journal={Nonlinearity},
   volume={29},
   date={2016},
   number={12},
   pages={C1--C2},
   issn={0951-7715},
   review={\MR{3580322}},
   doi={10.1088/0951-7715/29/12/C1},
}

\bib{M}{article}{
   author={Masaki, Satoshi},
   title={Classification of a class of systems of cubic ordinary
   differential equations},
   journal={J. Differential Equations},
   volume={344},
   date={2023},
   pages={471--508},
   issn={0022-0396},
   review={\MR{4510789}},
   doi={10.1016/j.jde.2022.11.001},
}

\bib{M2}{misc}{
	author={Masaki, Satoshi},
   title={On scalar-type standing-wave solutions to systems of nonlinear Schrodinger equations},
   	status={available as arXiv:2212.00754},
}

\bib{M3}{misc}{
	author={Masaki, Satoshi},
   title={Partial classification of the large-time behavior of solutions to cubic nonlinear Schr\"odinger systems},
   	status={available as arXiv:2401.00478},
}

\bib{MM}{article}{
   author={Masaki, Satoshi},
   author={Miyazaki, Hayato},
   title={Long range scattering for nonlinear Schr\"{o}dinger equations with
   critical homogeneous nonlinearity},
   journal={SIAM J. Math. Anal.},
   volume={50},
   date={2018},
   number={3},
   pages={3251--3270},
   issn={0036-1410},
   review={\MR{3815545}},
   doi={10.1137/17M1144829},
}

\bib{MM2}{article}{
   author={Masaki, Satoshi},
   author={Miyazaki, Hayato},
   title={Nonexistence of scattering and modified scattering states for some
   nonlinear Schr\"{o}dinger equation with critical homogeneous
   nonlinearity},
   journal={Differential Integral Equations},
   volume={32},
   date={2019},
   number={3-4},
   pages={121--138},
   issn={0893-4983},
   review={\MR{3909981}},
}

\bib{MMU}{article}{
   author={Masaki, Satoshi},
   author={Miyazaki, Hayato},
   author={Uriya, Kota},
   title={Long-range scattering for nonlinear Schr\"{o}dinger equations with
   critical homogeneous nonlinearity in three space dimensions},
   journal={Trans. Amer. Math. Soc.},
   volume={371},
   date={2019},
   number={11},
   pages={7925--7947},
   issn={0002-9947},
   review={\MR{3955539}},
   doi={10.1090/tran/7636},
}


\bib{MSU1}{article}{
   author={Masaki, Satoshi},
   author={Segata, Jun-ichi},
   author={Uriya, Kota},
   title={On asymptotic behavior of solutions to cubic nonlinear
   Klein-Gordon systems in one space dimension},
   journal={Trans. Amer. Math. Soc. Ser. B},
   volume={9},
   date={2022},
   pages={517--563},
   review={\MR{4439505}},
   doi={10.1090/btran/116},
}

\bib{MSU2}{article}{
	author={Masaki, Satoshi},
	author={Segata, Jun-ichi},
	author={Uriya, Kota},
	title={Asymptotic behavior in time of solution to system of cubic nonlinear Schr\"odinger equations in one space dimension},
	conference={
		title={ICMPI 2021},
	},
	book={
	title={Mathematical Physics and Its Interactions},
	 series={Springer Proc. Math. Stat.},
	volume={451},
	publisher={Springer},
	editor={Machihara},
	},
	date={2024},
	doi={10.1007/978-981-97-0364-7\_5},
	pages={119--180},
}

\bib{Mu}{article}{
   author={Murphy, Jason},
   title={A review of modified scattering for the $1d$ cubic NLS},
   conference={
      title={Harmonic analysis and nonlinear partial differential equations},
   },
   book={
      series={RIMS K\^{o}ky\^{u}roku Bessatsu},
      volume={B88},
      publisher={Res. Inst. Math. Sci. (RIMS), Kyoto},
   },
   date={2021},
   pages={119--146},
   review={\MR{4443392}},
}

\bib{MurPus}{article}{
   author={Murphy, Jason},
   author={Pusateri, Fabio},
   title={Almost global existence for cubic nonlinear Schr\"{o}dinger
   equations in one space dimension},
   journal={Discrete Contin. Dyn. Syst.},
   volume={37},
   date={2017},
   number={4},
   pages={2077--2102},
   issn={1078-0947},
   review={\MR{3640589}},
   doi={10.3934/dcds.2017089},
}

\bib{NST}{article}{
   author={Nakamura, Yoshihisa},
   author={Shimomura, Akihiro},
   author={Tonegawa, Satoshi},
   title={Global existence and asymptotic behavior of solutions to some
   nonlinear systems of Schr\"{o}dinger equations},
   journal={J. Math. Sci. Univ. Tokyo},
   volume={22},
   date={2015},
   number={3},
   pages={771--792},
   issn={1340-5705},
   review={\MR{3408074}},
}

\bib{Oz}{article}{
   author={Ozawa, Tohru},
   title={Long range scattering for nonlinear Schr\"{o}dinger equations in one
   space dimension},
   journal={Comm. Math. Phys.},
   volume={139},
   date={1991},
   number={3},
   pages={479--493},
   issn={0010-3616},
   review={\MR{1121130}},
}
	
\bib{SaSu}{article}{
   author={Sakoda, Daisuke},
   author={Sunagawa, Hideaki},
   title={Small data global existence for a class of quadratic derivative
   nonlinear Schr\"odinger systems in two space dimensions},
   journal={J. Differential Equations},
   volume={268},
   date={2020},
   number={4},
   pages={1722--1749},
   issn={0022-0396},
   review={\MR{4042357}},
   doi={10.1016/j.jde.2019.09.032},
}
	
\bib{Sa}{article}{
   author={Sato, Takuya},
   title={$L^2$-decay estimate for the dissipative nonlinear Schr\"{o}dinger
   equation in the Gevrey class},
   journal={Arch. Math. (Basel)},
   volume={115},
   date={2020},
   number={5},
   pages={575--588},
   issn={0003-889X},
   review={\MR{4154572}},
   doi={10.1007/s00013-020-01483-y},
}

\bib{Shimomura}{article}{
   author={Shimomura, Akihiro},
   title={Asymptotic behavior of solutions for Schr\"{o}dinger equations
   with dissipative nonlinearities},
   journal={Comm. Partial Differential Equations},
   volume={31},
   date={2006},
   number={7-9},
   pages={1407--1423},
   issn={0360-5302},
   review={\MR{2254620}},
   doi={10.1080/03605300600910316},
}

\bib{Tsu}{article}{
   author={Tsutsumi, Yoshio},
   title={The null gauge condition and the one-dimensional nonlinear
   Schr\"odinger equation with cubic nonlinearity},
   journal={Indiana Univ. Math. J.},
   volume={43},
   date={1994},
   number={1},
   pages={241--254},
   issn={0022-2518},
   review={\MR{1275461}},
   doi={10.1512/iumj.1994.43.43012},
}

\bib{U}{article}{
   author={Uriya, Kota},
   title={Final state problem for systems of cubic nonlinear Schr\"{o}dinger
   equations in one dimension},
   journal={Ann. Henri Poincar\'{e}},
   volume={18},
   date={2017},
   number={7},
   pages={2523--2542},
   issn={1424-0637},
   review={\MR{3665222}},
   doi={10.1007/s00023-017-0581-2},
}

\end{biblist}
\end{bibdiv}

\end{document}